\def\bC{{\mathbf C}}
\def\bcM{\overline{\cM}}
\numberwithin{equation}{section}
\newcommand{\Spec}{\operatorname{Spec}}
\def\sO{{\mathscr O}}
\def\sL{{\mathscr L}}
\def\sO{\mathscr{O}}
\def\sF{\mathscr{F}}
\newcommand{\CC}{\mathbb{C}}
\newcommand{\EE}{\mathbb{E}}
\newcommand{\LL}{\mathbb{L}}
\newcommand{\PP}{\mathbb{P}}
\newcommand{\ZZ}{\mathbb{Z}}
\newcommand{\GG}{\mathbb{G}}
\newcommand{\FF}{\mathbb{F}}
\newcommand{\VV}{\mathbb{V}}
\newcommand{\TT}{\mathbb{T}}
\def\sK{{\mathscr K}}
\newcommand{\bk}{\mathbf{k}}
\newcommand{\bq}{\mathbf{q}}
\newcommand{\bw}{\mathbf{w}}
\newcommand{\kk}{\bk}
\newcommand{\cal}{\mathcal}
\def\ii{\mathfrak i}
\def\ee{\mathfrak e}
\def\jj{\mathfrak j}
\def\cC{{\cal C}}
\def\cD{{\cal D}}
\def\cE{{\cal E}}
\def\cF{{\cal F}}
\def\cL{{\cal L}}
\def\cM{{\cal M}}
\def\cN{{\cal N}}
\def\cO{{\cal O}}
\def\cP{{\cal P}}
\def\cQ{{\cal Q}}
\def\cU{{\cal U}}
\def\cV{{\cal V}}
\def\cZ{{\cal Z}}
\def\fC{\mathfrak{C}}
\def\fD{\mathfrak{D}}
\def\fp{\mathfrak{p}}
\def\fu{\mathfrac{u}}
\def\fv{\mathfrak{v}}
\def\sP{{\mathscr P}}
\def\mapright#1{\,\smash{\mathop{\lra}\limits^{#1}}\,}
\def\dual{^{\vee}}
\def\sta{^\ast}
\def\virt{^{\mathrm{vir}}}
\def\upmo{^{-1}}
\def\sta{^{\ast}}
\def\mm{{\mathfrak m}}
\def\sta{^*}
\def\lra{\longrightarrow}
\def\lsta{_{\ast}}
\newcommand{\lam}{\lambda}
\newcommand{\si}{\sigma}
\def\begeq{\begin{equation}}
\def\endeq{\end{equation}}
\def\and{\quad{\rm and}\quad}
\def\bl{\bigl(}
\def\br{\bigr)}
\def\defeq{:=}
\def\sub{\subset}
\def\Ao{{\mathbb A}^{\!1}}
\def\Po{{\mathbb P^1}}
\def\and{\quad\text{and}\quad}
\def\ob{\text{ob}}
\DeclareMathOperator{\pr}{pr} 
 \DeclareMathOperator{\Ext}{Ext}
  \DeclareMathOperator{\Hom}{Hom}
 \DeclareMathOperator{\rank}{rank}
\DeclareMathOperator{\spec}{Spec}
\newtheorem{prop}{Proposition}[section]
\newtheorem{theo}[prop]{Theorem}
\newtheorem{lemm}[prop]{Lemma}
\newtheorem{coro}[prop]{Corollary}
\newtheorem{defi}[prop]{Definition}
\newtheorem{defi-prop}[prop]{Definition-Proposition}
\DeclareMathOperator{\coker}{coker}
\def\Ob{\cO b}
\def\Pn{{\mathbb P}^n}
\def\sta{^\ast}
\let\lab=\label
\def\sO{{\mathscr O}}
\def\sH{{\mathscr H}}
\def\beq{\begin{equation}}
\def\eeq{\end{equation}}
\def\vsp{\vskip5pt}
\def\Pf{{\PP^4}}
\def\bD{{\mathbf D}}
\def\fM{{\mathfrak M}}
\def\of{^{\otimes 5}}
\def\bee{\begin{equation}}
\def\eeq{\end{equation}}
\def\fA{{\mathfrak A}}
\let\eps=\epsilon
\def\ti{\tilde}
\def\barM{{\overline{M}}}
\def\fq{\mathfrak q}
\def\upf{^{\oplus 5}}
\def\fQ{{\mathfrak Q}}
\def\cpdp{\cP} 
\def\cqg{{\cQ_{g}}}
\def\nq{{N}}
\def\cng{{\cN_{g}}}
\def\cngp{\cN} 
\def\Vb{\mathrm{Vb}}
\def\cpd{\cP}
\def\cpg{\cP}
\def\cpgp{\cP}
\def\cvg{{\cV_g}}
\def\cvgp{\cV} 
\def\nn{\mathfrak n}
\def\fS{\mathfrak S}
\def\fy{\mathfrak y}
\def\fdg{{\fD_g}}
\def\tfdg{{\widetilde\fD_g}}
\def\AA{\mathbb A}
\def\lAo{_{\Ao}}
\let\lab=\label
\title{Gromov-Witten invariants of \\
stable maps with fields}
\date{}
\author{Huai-liang Chang and }
\address{Department of Mathematics, Hong Kong University of Science and Technology}
\email{mahlchang@ust.hk}
\author{Jun Li}
\address{Department of Mathematics, Stanford University} \email{jli@math.stanford.edu}
\begin{document}
\maketitle

\begin{abstract}
We construct the Gromov-Witten invariants of moduli of stable morphisms to $\Pf$ with fields.
This is the all genus mathematical theory of the Guffin-Sharpe-Witten model, and is 
a modified twisted Gromov-Witten invariants of $\Pf$. 
These invariants are constructed using
the cosection localization of Kiem-Li, an algebro-geometric analogue of Witten's perturbed 
equations in Landau-Ginzburg theory. We prove that these invariants coincide, up to sign, 
with the Gromov-Witten invariants of quintics.
\end{abstract}
%
%

\section{Introduction}

The Candelas-dela Ossa-Green-Parkes' genus zero generating function \cite{Can} of the Gromov-Witten invariants of quintic Calabi-Yau threefolds
was proved by Givental \cite{Gi} and Lian-Liu-Yau \cite{LLY}; the genus one generating function
of Bershadsky-Cecotti-Ooguri-Vafa's \cite{BCOV} was proved by Zinger \cite{Zi}.
Both proofs rely on the ``hyperplane property'' of the Gromov-Witten invariants of quintics,
which expresses the invariants in terms of ``Euler class of bundles'' over the moduli of stable morphisms to $\Pf$. 
The hyperplane property for genus zero was derived by Kontseviech \cite{Ko}; the case of genus one was proved by
Li-Zinger \cite{LZ}. This paper is our first step to build such a theory for all genus Gromov-Witten invariants
of quintics, and beyond.

In this paper, we introduce a new class of moduli spaces:
the moduli of stable morphisms to $\Pf$ with fields. These moduli spaces are  cones over
the usual moduli of stable morphisms to $\Pf$; they are not proper for positive genus. We use Kiem-Li's cosection localized
virtual cycle to construct their localized virtual cycles, thus their Gromov-Witten invariants. Applying degeneration, we prove
that these invariants coincide (up to signs) with the Gromov-Witten invariants of the quintics.


We briefly outline our construction and the main theorem. Given non-negative
integers $g$ and $d$, we form
the moduli $\bcM_g(\Pf,d)^p$ of genus $g$ degree $d$ stable morphisms to $\Pf$
with $p$-fields:
$$\bcM_g(\Pf,d)^p=\{[u,C,p]\ \big| \ [u,C]\in \bcM_g(\Pf,d),\ p\in \Gamma(C, u\sta \sO_\Pf(-5)\otimes \omega_C)\,\}/\sim.
$$
Here $\bcM_g(\Pf,d)$ is the moduli of degree $d$ genus $g$ stable morphisms to
$\Pf$. 

It is a Deligne-Mumford stack; forgetting the fields, the induced morphism
$$\bcM_g(\Pf,d)^p\to\bcM_g(\Pf,d)
$$ 
has fiber $H^0(u\sta \sO_\Pf(-5)\otimes\omega_C)$
over $[u,C]\in\bcM_g(\Pf,d)$.
When $g$ is positive, it is not proper. 

The moduli space $\bcM_g(\Pf,d)^p$ has a perfect obstruction theory, thus has a virtual class. 
To overcome its non-properness in order to define its Gromov-Witten invariant,
we construct a cosection
(homomorphism) of its obstruction sheaf. The choice of the cosection
depends on the choice of a degree five homogeneous
polynomial, like $\bw=x_1^5+\ldots+x_5^5$.
The non-surjective loci (called the degeneracy loci) of the cosection associated to $\bw$
$$\sigma : \Ob_{\bcM_g(\Pf,d)^p}\lra \sO_{\bcM_g(\Pf,d)^p}
$$
is
$$
\bcM_g(Q,d)\sub \bcM_g(\Pf,d)^p, \quad Q=(x_1^5+\ldots+x_5^5=0)\sub \Pf,
$$
which is proper.
Applying Kiem-Li cosection localized virtual class construction, we obtain a localized virtual cycle
$$[\bcM_g(\Pf,d)^p]\virt_\sigma\in A_0 \bcM_g(Q,d).
$$
We define the Gromov-Witten invariant of $\bcM_g(\Pf,d)^p$ be
$$N_g(d)^p_\Pf=\deg [\bcM_g(\Pf,d)^p]\virt_\sigma.
$$
(We also call them the Gromov-Witten invariants of the space $(K_{\Pf},\bw)$.) 

\vsp
It relates to the Gromov-Witten invariants the quintic $Q$:

\begin{theo}\label{thm1.1}
For $g\ge 0$ and $d>0$,
the Gromov-Witten invariant of $\bcM_g(\Pf,d)^p$ (or $(K_\Pf,\bw)$) 
coincides with the Gromov-Witten 
invariant $N_g(d)_Q$ of the quintic $Q$ up to a sign:
$$
N_g(d)^p_\Pf=(-1)^{5d+1-g}N_g(d)_Q.
$$
\end{theo}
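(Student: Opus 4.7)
The plan is to apply Kiem--Li cosection localization to $\bcM_g(\Pf,d)^p$ and then identify the localized class on $\bcM_g(Q,d)$ with $\pm[\bcM_g(Q,d)]\virt$. First I would write the cosection $\sigma$ explicitly. Thinking of $\bw$ as defining a Landau--Ginzburg potential $W(x,p)=p\cdot \bw(x)$ on the total space of $K_\Pf\to\Pf$, the induced function on $\bcM_g(\Pf,d)^p$ has differential
\begin{equation*}
\sigma\colon (\dot u,\dot p)\longmapsto \langle d\bw(\dot u),p\rangle+\langle \bw\circ u,\dot p\rangle,
\end{equation*}
valued in $H^1(C,\omega_C)=\CC$ via the contraction $u\sta\sO_\Pf(5)\otimes u\sta\sO_\Pf(-5)\otimes\omega_C\to\omega_C$. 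Its non-surjective locus is cut out by $\bw\circ u=0$ and $d\bw\cdot p=0$; since $Q$ is smooth, $d\bw|_Q$ has no zeros, so the second condition forces $p=0$, and the degeneracy locus is exactly $\bcM_g(Q,d)\subset \bcM_g(\Pf,d)^p$, which is proper. Kiem--Li then produce $[\bcM_g(\Pf,d)^p]\virt_\sigma\in A_0\bcM_g(Q,d)$.

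Next I would compare the obstruction theories along $\bcM_g(Q,d)\hookrightarrow \bcM_g(\Pf,d)^p$. Combining the conormal sequence $0\to T_Q\to T_\Pf|_Q\to \sO_Q(5)\to 0$ with Serre duality
\begin{equation*}
H^0(u\sta\sO_\Pf(-5)\otimes\omega_C)\cong H^1(u\sta\sO_\Pf(5))\dual,\qquad H^1(u\sta\sO_\Pf(-5)\otimes\omega_C)\cong H^0(u\sta\sO_\Pf(5))\dual,
\end{equation*}
one finds that $\mathrm{Ob}_{\bcM_g(\Pf,d)^p}|_{\bcM_g(Q,d)}$ is an extension of $\mathrm{Ob}_{\bcM_g(Q,d)}$ by a two-term complex whose terms are $H^1(u\sta\sO_\Pf(5))$ and $H^0(u\sta\sO_\Pf(5))\dual$, paired by the $d\bw$-contraction; the cosection $\sigma$ realizes precisely this pairing. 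A Koszul-resolution style argument inside Kiem--Li's construction then yields
\begin{equation*}
[\bcM_g(\Pf,d)^p]\virt_\sigma=(-1)^r\,[\bcM_g(Q,d)]\virt,
\end{equation*}
where $r$ is the parity of the cancelled factor; Riemann--Roch gives $r\equiv \chi(u\sta\sO_\Pf(5))=5d+1-g$, matching the advertised sign.

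The main obstacle is globalizing this pointwise picture. Neither $R^0\pi_\ast u\sta\sO_\Pf(5)$ nor $R^1\pi_\ast u\sta\sO_\Pf(5)$ is locally free as $(u,C)$ varies, so the obstruction sheaf does not literally split off a vector bundle factor along the degeneracy locus, and the Koszul comparison above is only valid at closed points. To circumvent this I would invoke the degeneration argument indicated in the abstract: set up a one-parameter family (for instance degenerating $\Pf$, or the polynomial $\bw$, or using a master space for the $p$-field direction) in which both $[\bcM_g(\Pf,d)^p]\virt_\sigma$ and $[\bcM_g(Q,d)]\virt$ are deformation-invariant. Choosing the family so that on a general fiber the excess obstruction reduces to an honest bundle makes the comparison transparent there, and specialization together with the invariance of localized virtual cycles under cosection-preserving deformations propagates the equality of cycles back to the original moduli, delivering the claimed identity with the correct sign.
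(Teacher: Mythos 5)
Your first two steps track the paper's argument closely: the cosection you write down is exactly the one constructed in Section 3 (compare \eqref{zeta} and Lemma \ref{same}), the identification of its degeneracy locus with $\bcM_g(Q,d)$ is Proposition \ref{deg}, and your fiberwise sign computation --- the Koszul-type evaluation on $V=H^0(u\sta\sO(5))\oplus H^0(u\sta\sO(-5)\otimes\omega_C)$ with $(-1)^{\dim V}=(-1)^{\chi(u\sta\sO(5))}=(-1)^{5d+1-g}$ --- is precisely the computation at the end of the proof of Theorem \ref{formal}. The gap is in your third step, which you correctly flag as the main obstacle but then do not resolve. ``Set up a one-parameter family (degenerating $\Pf$, or the polynomial $\bw$, or using a master space)'' is a list of candidates, not an argument, and the mechanism you propose --- choosing the family so that ``on a general fiber the excess obstruction reduces to an honest bundle'' --- cannot work as stated: the jumping of $h^0(u\sta\sO(5))$ and $h^1(u\sta\sO(5))$ is intrinsic to $\bcM_g(Q,d)$ for $g\geq 1$ and persists on every fiber of any deformation of the target or of $\bw$.

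What the paper actually does is degenerate the target to the normal cone: $V=\mathrm{Bl}_{Q\times 0}(\Pf\times\Ao)$ minus the proper transform of $\Pf\times 0$, with general fiber $\Pf$ and central fiber the normal bundle $N=N_{Q/\Pf}$. Deformation invariance of cosection-localized classes (Proposition \ref{shriek}, resting on \cite{KL} and on the nontrivial compatibility of obstruction theories set up in Sections 4.4--4.6, where an extra $R^1\pi_\ast\sO$ factor in the family obstruction theory must be removed by a mapping-cone construction) reduces the claim to the central fiber (Theorem \ref{thm5.3}). The point of this particular degeneration is that $\bcM_g(N,d)^p$ is globally an abelian cone over $\bcM_g(Q,d)$, namely the direct image cone of $u\sta\sO(5)\oplus u\sta\sO(-5)\otimes\omega_C$; the comparison with $[\bcM_g(Q,d)]\virt$ is then made not by splitting off a vector bundle but by Manolache's virtual pullback, extended to cosection-localized classes (Lemma \ref{pullback} and the appendix on the Kim--Kresch--Pantev double deformation space). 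The non-local-freeness you worry about is absorbed by this cone-and-virtual-pullback formalism, and your pointwise Koszul picture becomes the legitimate fiberwise evaluation of the localized Gysin map on $[V\times V\dual/V]$. Without identifying this degeneration and carrying out the virtual-pullback comparison on the central fiber, the proof does not close.
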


When $g=0$, this is derived in Guffin-Sharpe \cite{Sharpe} using path-integral. This identity also is
the Kontsevich's formula on $g=0$ Gromov-Witten invariants of quintics.
If one views the localized virtual cycle of $\bcM_g(\Pf,d)^p$ as ``Euler class of bundles'',
this theorem is a substitute of the ``hyperplane property'' of the Gromov-Witten invariants
of quintics in high genus.



\vsp
We believe this construction will lead to a mathematical approach to Witten's Gauged-Linear-Sigma model 
for all genus. 
In \cite{GLSM}, Witten constructed a (gauged) topological field theory (for $g=0$) 
whose target is the stacky quotient $[\CC^6/\CC^\ast]$ (of weights $(1,1,1,1,1,-5)$)
with a superpotential, say $\bw$.
This theory has two GIT quotients: one is $(K_{\Pf},\bw)$, called the massive theory; the other is $([\CC^5/\ZZ_5],\bw)$.
called the linear Landau-Ginzberg model.\footnote{Linear Landau-Ginzburg model means 
the space is the orbifold quotient of an affine space.}
Witten proposed to A-twist both models: the A-twist
of $(K_{\Pf},\bw)$ likely is a theory of moduli of
stable quotients, and the resulting theory is of Landau-Ginzburg type. 
The $A$-twist of $([\CC^5/\ZZ_5],\bw)$ is 
related to the generalized Witten conjecture \cite{ALG} for $A_{4}=(\CC,x^5)$.
 

The program proposed in \cite{GLSM} provides a possible road
map towards an all genus mathematical theory linking the Gromov-Witten theory of quintic 
to the Landau-Ginzberg model of $([\CC^5/\ZZ_5],\bw)$. A bolder speculation is that there is 
a geometric mirror construction identifying the
A-twisted topological string theory of $([\CC^5/\ZZ_5],\bw)$ with the B-side invariants of its 
Landau-Ginzburg Mirror. 
 
In \cite{FJRW}, Fan, Jarvis and Ruan constructed
the virtual cycle of the $A$-twisted topological string theories of the linear Landau-Ginzberg model of $([\CC^5/\ZZ_5],\bw)$;
their construction is via analytic purtubation of Witten's equation. 
Later,  Ruan and Chiodo proved  \cite{Ruan} the genus zero mirror symmetry for $([\CC^5/\ZZ_5],\bw)$ and its mirror.
  
For massive theory of $(K_{\Pf},\bw)$, Marian, Oprea and Pandharipande
constructed the moduli of stable quotients \cite{StableQ}, which is believed to be an example of massive 
instantons. It is interesting to see how the invariants of $A$-twisting the construction in \cite{StableQ} 
relate to the invariants of the massive instantons in $(K_{\Pf},\bw)$ in Witten's program. 
    
Using Super-String theories, Guffin and Sharpe constructed a special type of genus zero Landau-Ginzberg model for 
$(K_{\Pf},\bw)$, and equated it with the genus zero
Gromov-Witten invariants of the quintic $Q$ \cite{Sharpe}. 
The notion of $p$-fields was introduced in this work.
Using non-perturbative localization of path-integral, they reduced this theory to the genus zero Gromov-Witten invariants
of quintics. Since this follows Witten's Gauged-Linear-Sigma-Model program, we call this construction the Guffin-Sharpe-Witten model. 
\vsp

Our work is an algebro-geometric construction of Guffin-Sharpe-Witten model for all genus. 
The moduli of stable morphisms with $p$-fields is the algebro-geometric substitute of the
phase space of all smooth maps with smooth fields. The cosection localized virtual cycle 
is the analogue of Witten's perturbed equation. 
Theorem \ref{thm1.1} shows that the Gromov-Witten invariants of the algebro-geometric Guffin-Sharpe-Witten model
of all genus coincide up to signs with the Gromov-Witten invariants of 
quintic threefolds.

Our construction applies to global complete intersection Calabi-Yau threefolds of toric varieties. 
In the subsequent papers, we will apply the techniques developed to
the moduli of stable quotients (cf. \cite{StableQ}) to obtain all genus invariants of 
massive theory of $(K_\Pf,\bw)$ \cite{CL-1}; we will also apply it to the linear Landau-Gingzberg model to obtain an alternative
algebro-geometric construction of Fan-Jarvis-Ruan-Witten invariants \cite{CLL} . In the later case, 
the resulting invariants are equal to those defined using perturbed the Witten equations \cite{FJRW}.

We believe the new invariants and their equivalence with the Gromov-Witten invariants
of quintics provide the first step toward building a geometric bridge establishing
the conjectural equivalence of Gromov-Witten invariants
of quintics and the Fan-Jarvis-Ruan-Witten invariants of $([\CC^5/\ZZ_5],\bw)$. 
Constructing such bridge will be the long term goal of this project.

\vsp
\noindent
{\bf Acknowledgement}. The first author thanks E. Sharpe for his lecture on topological field theory in the university of Utah, summer 2007.
He also thanks B. Fantechi for explanation about details in the paper \cite{BF} during the first 
author's stay in SISSA, Trieste as a postdoctor during 2007-2009, and thanks Y-B. Ruan for his lectures and 
generous discussion introducing to him the Landau Ginzburg theory. The second author is partially support
by NSF grant.

\vsp
\noindent
{\bf Conventions}. 
In this paper, the primary focus is on moduli of stable morphisms with fields to $\Pf$, to a smooth quintic Calabi-Yau
$Q\sub \Pf$ defined by $\sum x_i^5=0$, and a deformation of $\Pf$ to the normal cone to $Q\sub\Pf$. 

Throughout the paper, 
we fix a homogeneous coordinates $[x_1,\ldots,x_5]$ of $\Pf$, with $x_i\in H^0(\Pf, \sO(1))$ and $\sO(1)\defeq\sO_{\Pf}(1)$.
We 
denote by $N$ the normal bundle to $Q$ in $\Pf$.
Using the defining section $\sum x_i^5=0$, we obtain a canonical isomorphism $N\cong \sO_Q(5)$.


In this paper, we will fix positive integers $g$ and $d$ throughout. 
We will use $(f,\cC)$ with subscripts to denote the universal families of various moduli spaces. 
For instance, after abbreviating $\cP=\bcM_g(\Pf,d)^p$, the universal curve and map of
$\cP$ is denoted by 
$$(f_{\cpg}, \pi_{\cpg}): \cC_{\cpd}\lra \Pf\times\cpd.
$$

For any locally free sheaf $\sL$ on $\cC$, we denote by $\Vb(\sL)$ the underlying
vector bundle of $\sL$; namely, the sheaf of sections of $\Vb(\sL)$ is $\sL$.

In this paper, we will use fonts $\EE$, etc. to denote derived objects (of complexes). We reserve $\LL_{X/Y}$ to denote the
cotangent complex of $X\to Y$; we denote by $\TT_{X/Y}$ its derived dual $\TT_{X/Y}=\LL_{X/Y}\dual$, called the tangent
complex of $X\to Y$. We use $\phi_{X/Y}:\TT_{X/Y}\to\EE_{X/Y}$ to denote a relative obstruction theory of $X\to Y$,
following Behrend-Fantechi \cite{BF}.

Without causing confusion, all pull back of derived objects (resp. sheaves) are derived pull back (resp. sheaves pull back)
unless otherwise stated. 


\section{Direct image cones and moduli of sections}
\def\Sbul{\mathrm{Sym}}
\def\be{{\mathbf e}}

In this section, to a locally free sheaf $\cL$ over a family of nodal curves $\pi: \cC\to\fA$ over an Artin stack $\fA$,
we will construct its direct image cone $C(\pi\lsta \cL)$, 
and its relative obstruction theory. 

\subsection{Direct image cones}
Let $\fA$ be an Artin stack, $\pi:\cC\to\fA$ be a flat family of connected, nodal, arithmetic genus $g$ curves,
and $\sL$ a locally free sheaf on $\cC$.

\begin{defi}\label{def-1}
For any scheme $S$, we define $C(\pi\lsta \sL)(S)$ be the collection of $(\rho,p)$ so that
$\rho: S\to\fA$ is a morphism and $p\in H^0(\cC_S, \rho\sta \sL)$, where $\cC_S=S\times_\fA \cC$
and $\rho\sta\sL=\sL\times_{\sO_\cC}\sO_{\cC_S}$.

An arrow from $(\rho, p)$ to $(\rho',p')$ in $C(\pi\lsta \sL)(S)$ consists of an arrow $\tau: \cC_S\to \cC_S$ in 
$\fA(S)$ such that 
under the induced ismorphism $ \tau\sta\rho^{\prime\ast} \sL\cong \rho\sta\sL$,
$p=\tau\sta p'$.
Given $S\to S'$, we define $C(\pi\lsta \sL)(S')\to C(\pi\lsta \sL)(S)$ by pull back.
\end{defi}

We show that $C(\pi\lsta \sL)$ is a stack over $\fA$. Given a module $\cF$, we denote by
$\Sbul \cF$ the algebra of symmetric product of $\cF$.

\begin{prop}\label{stack}
Let the notation be as in Definition \ref{def-1}. We have canonical $\fA$-isomorphism
$$C(\pi\lsta\sL)\cong \spec_\fA \Sbul R^1 \pi\lsta (\sL\dual\otimes\omega_{\cC/\fA}).
$$
\end{prop}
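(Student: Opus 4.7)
The plan is to realize $C(\pi\lsta\sL)$ via the functor of sections of $\sL$, represent that functor (locally on $\fA$) as the kernel of a map of locally free sheaves coming from a two-term resolution of $R\pi\lsta\sL$, and then invoke relative Serre duality on the family of nodal curves $\pi$ to convert the resulting description into one involving $R^1\pi\lsta(\sL\dual\otimes\omega_{\cC/\fA})$.

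Concretely, first I would choose (étale-locally on $\fA$) a $\pi$-ample relative Cartier divisor $D\sub\cC$ and $n\gg 0$ so that $R^1\pi\lsta\sL(nD)=0$ and both $\pi\lsta\sL(nD)$ and $\pi\lsta(\sL(nD)|_{nD})$ are locally free. The short exact sequence
$$0\lra\sL\lra\sL(nD)\lra\sL(nD)|_{nD}\lra 0$$
then produces a two-term complex $[E^0\xrightarrow{d} E^1]$ of locally free sheaves on $\fA$ representing $R\pi\lsta\sL$, and whose formation commutes with arbitrary base change $\rho\colon S\to\fA$. Hence an object of $C(\pi\lsta\sL)(S)$ is a pair $(\rho,p)$ with $p\in H^0(\cC_S,\rho\sta\sL)=\ker(\rho\sta d\colon\rho\sta E^0\to\rho\sta E^1)$. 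By the standard identification of a kernel functor with the fiber product of the total space of $E^0$ over the zero section of $E^1$, this functor is represented by
$$\spec_\fA\bigl(\Sbul (E^0)\dual\otimes_{\Sbul (E^1)\dual}\sO_\fA\bigr)\cong\spec_\fA\Sbul\coker\bigl(d\dual\colon(E^1)\dual\to(E^0)\dual\bigr).$$

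Next I would apply relative Grothendieck--Serre duality for the proper flat family of nodal curves $\pi$ (whose relative dualizing sheaf $\omega_{\cC/\fA}$ is a line bundle). This yields a natural quasi-isomorphism $(R\pi\lsta\sL)\dual\cong R\pi\lsta(\sL\dual\otimes\omega_{\cC/\fA})[1]$ in the derived category of $\fA$. In our two-term model the right-hand side is represented by $[(E^1)\dual\to(E^0)\dual]$ placed in degrees $0,1$, so $R^1\pi\lsta(\sL\dual\otimes\omega_{\cC/\fA})\cong\coker(d\dual)$. Combining with the previous step gives the asserted isomorphism. Since the target is intrinsic to $(\pi,\sL)$, the local construction glues to a global identification over $\fA$.

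The main obstacle is the stack-theoretic bookkeeping: we need the two-term representation to exist étale-locally on the Artin stack $\fA$, and, crucially, $R^1\pi\lsta(\sL\dual\otimes\omega_{\cC/\fA})$ must commute with arbitrary base change so that its $\Sbul$-$\spec$ has the correct functor of points on $\fA$-schemes. This is where the presence of $R^1$ (the top relative cohomology for a family of curves) saves us: as the cokernel of a map of locally free sheaves in the dualized complex, its formation is manifestly right-exact and thus commutes with any base change, which is exactly what the functorial identification requires for the local description to descend to a canonical global isomorphism.
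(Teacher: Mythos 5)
Your argument is correct, and it rests on the same two pillars as the paper's own proof---relative Serre duality for the (complete intersection) morphism $\pi$ and the base-change property of the top direct image $R^1\pi\lsta$---but it is packaged differently. The paper never chooses a resolution: for each $\rho\colon S\to\fA$ it identifies the $S$-points of $\spec_\fA \Sbul\sF$ (with $\sF=R^1\pi\lsta(\sL\dual\otimes\omega_{\cC/\fA})$) with $\Hom_{\sO_S}(\sF\otimes_{\sO_\fA}\sO_S,\sO_S)$, rewrites $\sF\otimes_{\sO_\fA}\sO_S$ as $R^1\pi_{S\ast}(\sL_S\dual\otimes\omega_{\cC_S/S})$ by base change, and applies Serre duality on $\cC_S/S$ to land in $\Gamma(\cC_S,\sL_S)$; since everything is functorial there is no gluing step. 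You instead build an \'etale-local two-term presentation $[E^0\to E^1]$ of $R\pi\lsta\sL$, represent the section functor as the kernel of the induced map $\Vb(E^0)\to\Vb(E^1)$, and apply derived Serre duality once over $\fA$ to identify $\coker(d\dual)$ with $R^1\pi\lsta(\sL\dual\otimes\omega_{\cC/\fA})$. What your route buys is an explicit affine model of the cone and a transparent reason why the $\spec\Sbul$ has the right points (right-exactness of $\coker$, equivalently base change for the top cohomology); what it costs is the auxiliary choices of $D$ and $n$ and a final gluing step, which is harmless only because an isomorphism between two objects representing the same functor is unique---you should say this explicitly rather than appeal to the target being ``intrinsic.'' Two minor points to tighten: $H^0(\cC_S,\rho\sta\sL)$ should be read as $\ker\bigl(\Gamma(S,\rho\sta E^0)\to\Gamma(S,\rho\sta E^1)\bigr)$, the kernel on global sections rather than the kernel sheaf; and it is worth recording the identity $\Sbul(E^0)\dual\otimes_{\Sbul(E^1)\dual}\sO_\fA\cong \Sbul(E^0)\dual/\bigl(d\dual((E^1)\dual)\bigr)\cong \Sbul\coker(d\dual)$, which is the precise form of the kernel-representability you invoke.
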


\begin{proof}
For any scheme $S$ and a morphism $\rho: S\to \fA$, we let
$$C(\pi\lsta \sL)(\rho)=\{(\rho, p)\mid p\in H^0(\cC_S,\rho\sta\sL)\}\cong \Gamma(\cC_S, \rho\sta\sL).
$$
We define a transformation
\beq\label{tran-1}
C(\pi\lsta \sL)(\rho)\lra
\Hom_S\bl S, \spec_\fA \Sbul R^1 \pi\lsta (\sL\dual\otimes\omega_{\cC/\fA})\times_\fA S) 
\eeq
as follows.
We let $\sF=R^1\pi\lsta (\sL\dual\otimes \omega_{\cC/\fA})$.  Given a $\rho: S\to\fA$,
an $S$-morphism $S\to \Spec_\fA \Sbul  \sF\times_\fA S$ is given by 
a morphism of sheaves of $\sO_\fA$-algebra 
$$\Sbul \sF  \lra \sO_S,
$$
which is equivalent to a morphism of sheaves of $\sO_\fA$-modules
$$R^1\pi_{S \ast} (\sL_S\dual \otimes \omega_{\cC_S/S})=\sF\otimes_{\sO_\fA}\sO_S\lra \sO_S.
$$
Here we have used the base change property of $R^1\pi\lsta$. 

Applying Serre duality \cite{Conrad} 
to the complete intersection morphism $\pi_S:\cC_S\to S$,
we obtain
$$ \Hom_S(R^1\pi_{S \ast} (\sL_S\dual \otimes \omega_{\cC_S/S}),\sO_S)= \Gamma({\cC_S}, \sL_S).
$$
This defines the transformation \eqref{tran-1}. It is direct to check that this is an isomorphism, and satisfies base change
property. This proves the Proposition.
\end{proof}

\subsection{Moduli of sections}\label{sec2.2}

One can also construct the direct image cone via the moduli of sections. 
Let $\cC\to\fA$ be as in Definition \ref{def-1}; let $\cZ\to\cC$ be an Artin stack such that the arrow $\cZ\to \cC$ is
representable and quasi-projective. We define a groupoid $\fS$ (with dependence on $\cZ$ implicitly understood)
as follows.

For any scheme $S\to\fA$, we denote
$\cC_S=\cC\times_\fA S$ and $\cZ_S=\cZ\times_\cC \cC_S$; we view $\cZ_S$ as a scheme over $\cC_S$
via the projection $\pi_S:\cZ_S\to \cC_S$.
We define 
$$\fS{}(S)=\{ s:\cC_S\to \cZ_S\mid s \text{ are $\cC_S$-morphisms}\,\}.
$$
The arrows are defined by pull backs.

\begin{prop}\label{ZA}
The groupoid $\fS{}$ is an Artin stack with a natural projection to $\fA$. 
The morphism $\fS{}\to \fA$ is representable and quasi-projective.
\end{prop}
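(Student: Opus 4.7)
The plan is to identify $\fS$ with a classical scheme of sections and then descend from a smooth atlas of $\fA$. Since $\cC \to \fA$ is a flat proper family of nodal curves, and $\cZ \to \cC$ is representable and quasi-projective by hypothesis, the two standard inputs for Grothendieck's representability theorem for $\mathrm{Hom}$-functors are in place.

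First I would pick a smooth atlas $U \to \fA$ with $U$ a scheme and base change everything: $\cC_U \to U$ is then a flat proper family of nodal curves of schemes, and $\cZ_U = \cZ \times_\cC \cC_U \to \cC_U$ is a quasi-projective morphism of schemes. Consequently $\fS \times_\fA U$ is nothing but the classical functor of sections of $\cZ_U \to \cC_U$ over $U$, and by the Grothendieck/FGA existence theorem for $\mathrm{Hom}$-schemes (see e.g.\ Nitsure's exposition), this functor is represented by a $U$-scheme $\fS_U$ which is a disjoint union of quasi-projective $U$-schemes, stratified by discrete numerical invariants of the section (e.g.\ the Hilbert polynomial of its graph in $\cZ_U$). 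Next I would descend: the construction is manifestly functorial in base change on $U$, so the two pullbacks of $\fS_U$ along $U \times_\fA U \rightrightarrows U$ agree canonically, giving a descent datum; since representable quasi-projective morphisms of algebraic stacks satisfy fppf effective descent, $\fS_U$ descends to a representable quasi-projective morphism $\fS \lra \fA$. Unwinding the definitions (identifying sections of $\cZ_S \to \cC_S$ with lifts of $S \to U$ to $\fS_U$) shows that this stack agrees with the groupoid of the statement.

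The main obstacle is bookkeeping rather than substance. One should (i) accept ``quasi-projective'' in the locally-of-finite-type / disjoint-union sense, since no single Hilbert polynomial bounds the whole of $\fS$ without further constraints on the section, and (ii) check that the representability of $\cZ \to \cC$ does yield schemes rather than only algebraic spaces on atlas charts, which is automatic from quasi-projectivity. Both points are standard consequences of the FGA machinery, so no genuinely new ideas should be needed beyond the careful choice of atlas and descent.
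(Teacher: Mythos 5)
Your proposal is correct and is essentially the paper's argument: the paper's proof is a one-line appeal to the functorial construction of the Hilbert (Hom) scheme together with the representability and quasi-projectivity of $\cZ\to\cC$, and your atlas-plus-descent write-up is just the standard unwinding of that appeal.
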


\begin{proof}
This follows from the functorial construction of Hilbert scheme and that $\cZ\to\cC$ is representable and
quasi-projective.
\end{proof}

\begin{coro}
Let $\pi:\cC\to\fA$ be as in Definition \ref{def-1}, and let $\cZ=\Vb(\sL)$, which is the underlying vector 
bundle of the locally free sheaf $\sL$. Then canonically $C(\pi\lsta \sL)\cong \fS{}$
as stacks over $\fA$.
\end{coro}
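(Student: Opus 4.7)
The plan is to exhibit a natural, pullback-compatible bijection on $S$-points between the two groupoids, which manifestly commutes with the projections to $\fA$, and then invoke that both sides are already known to be stacks over $\fA$ (Proposition \ref{stack} and Proposition \ref{ZA}) to upgrade the bijection to an isomorphism of stacks.

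First I would unwind the $S$-points. Fix a scheme $S$ and a morphism $\rho:S\to\fA$. On one side, the fiber of $C(\pi_\ast\sL)$ over $\rho$ is $H^0(\cC_S,\rho^\ast\sL)$. On the other side, since $\cZ=\Vb(\sL)$ is obtained from the locally free sheaf $\sL$, base change gives $\cZ_S=\Vb(\sL)\times_\cC\cC_S=\Vb(\rho^\ast\sL)$, so the fiber of $\fS$ over $\rho$ is the set of $\cC_S$-morphisms $s:\cC_S\to\Vb(\rho^\ast\sL)$, i.e.\ sections of the vector bundle $\Vb(\rho^\ast\sL)\to\cC_S$. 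The tautological correspondence between sections of a vector bundle and global sections of its sheaf of sections then gives a natural bijection
\[
H^0(\cC_S,\rho^\ast\sL)\;\longleftrightarrow\;\{\cC_S\text{-morphisms }s:\cC_S\to\Vb(\rho^\ast\sL)\},
\]
sending $p$ to the section whose value at a point $c\in\cC_S$ is $p(c)$ in the fiber of $\Vb(\rho^\ast\sL)$ over $c$.

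Next I would check functoriality of this bijection. An arrow from $(\rho,p)$ to $(\rho',p')$ in $C(\pi_\ast\sL)(S)$ is a $\fA(S)$-arrow $\tau:\cC_S\to\cC_S$ identifying $\tau^\ast\rho'^{\,\ast}\sL$ with $\rho^\ast\sL$ and $\tau^\ast p'$ with $p$. Under the above bijection, this corresponds precisely to the equality $\tau^\ast s_{p'}=s_p$ of induced sections into $\Vb(\rho^\ast\sL)$, which is exactly the arrow structure in $\fS(S)$. Similarly, for a morphism $S\to S'$ the naturality of base change for both $H^0(\cC_{(-)},(-)^\ast\sL)$ and for morphisms into $\Vb(\sL)\times_\cC\cC_{(-)}$ ensures that the bijection commutes with restriction. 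Therefore the assignment defines an equivalence of groupoids $C(\pi_\ast\sL)(S)\xrightarrow{\sim}\fS(S)$ natural in $S$, and by construction it is compatible with the projections to $\fA$.

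Finally I would conclude by invoking the $2$-Yoneda lemma: a natural equivalence of the associated groupoid-valued functors over $\fA$ is the same as an isomorphism of stacks over $\fA$. The only step that requires any care is the verification that the identification $\cZ_S\cong\Vb(\rho^\ast\sL)$ is the correct one compatible with the arrows $\tau$, but this reduces to the base change property of $\Vb(\sL)=\Spec_\cC\Sbul\sL^\vee$ along $\cC_S\to\cC$, which is formal. I do not expect any real obstacle here; the corollary is essentially the tautology that a section of the vector bundle $\Vb(\sL)$ is the same datum as a global section of the sheaf $\sL$, recast in families over $\fA$.
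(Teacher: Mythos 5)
Your argument is correct and is precisely the tautology the paper has in mind — the corollary is stated without proof there, being the family version of the identification between sections of $\Vb(\sL)\to\cC_S$ and elements of $H^0(\cC_S,\rho^\ast\sL)$, together with the base-change compatibility of $\Vb(\sL)=\Spec_\cC\Sbul\sL^\vee$ that you verify. Nothing further is needed.
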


\subsection{The obstruction theory}\label{sec2.3}

We give the perfect obstruction theory of $\fS{}$. 
Let $\cZ\to\cC\to\fA$ be as in Proposition \ref{ZA}. Let $\pi_\fS:\cC_\fS \to \fS$
be the universal family of $\fS{}$ and let $\ee:\cC_\fS\to \cZ$ be the tautological
evaluation map. Namely, $(\pi_\fS,\ee):\cC_\fS\to \fS\times\cZ$ is the universal family
of $\fS$.

As mentioned at the end of the introduction, we let $\TT_{\fS/\fA}$ be the tangent complex of $\fS\to \fA$,
which is the dual of the cotangent complex $\LL_{\fS/\fA}$.

\begin{prop}\label{deformation}\label{DEF}
Let the situation be as stated. Suppose $\cZ\to\cC$ is smooth, then $\fS{}\to\fA$ has a
perfect relative obstruction theory
$$\phi_{\fS{}/\fA}: \TT_{\fS{}/\fA}\lra \EE_{\fS{}/\fA}\defeq R^\bullet \pi_{\fS\ast} \ee\sta \Omega_{\cZ/\cC}\dual.
$$
\end{prop}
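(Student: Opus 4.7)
The plan is to follow the standard Behrend--Fantechi recipe for constructing an obstruction theory on a mapping-type stack, and then to verify the obstruction axiom via classical deformation theory of sections of a smooth morphism.

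First I would construct $\phi_{\fS/\fA}$ from the universal evaluation $\ee:\cC_\fS\to\cZ$. Since $\ee$ is a $\cC$-morphism, functoriality of the cotangent complex yields a canonical arrow $\ee\sta\LL_{\cZ/\cC}\to\LL_{\cC_\fS/\cC}$. Because $\cC\to\fA$ is flat and $\cC_\fS=\fS\times_\fA\cC$, flat base change identifies $\LL_{\cC_\fS/\cC}\cong L\pi_\fS\sta\LL_{\fS/\fA}$, giving
$$
\alpha:\ \ee\sta\LL_{\cZ/\cC}\lra L\pi_\fS\sta\LL_{\fS/\fA}.
$$
I then apply relative Grothendieck--Serre duality for the proper flat nodal family $\pi_\fS$, with relative dualizing complex $\omega_{\pi_\fS}[1]$; using the adjunction $R\pi_{\fS\lsta}\dashv\pi_\fS^{!}=L\pi_\fS\sta(-)\otimes\omega_{\pi_\fS}[1]$, I convert $\alpha$ into a morphism
$$
R\pi_{\fS\lsta}\bl\ee\sta\LL_{\cZ/\cC}\otimes\omega_{\pi_\fS}[1]\br\lra \LL_{\fS/\fA}.
$$
Relative duality identifies the source as the derived dual of $R\pi_{\fS\lsta}\ee\sta\Omega_{\cZ/\cC}\dual=\EE_{\fS/\fA}$, and dualizing yields the required $\phi_{\fS/\fA}:\TT_{\fS/\fA}\to\EE_{\fS/\fA}$.

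Second, I would check that $\EE_{\fS/\fA}$ is perfect of amplitude $[0,1]$. The smoothness of $\cZ\to\cC$ makes $\Omega_{\cZ/\cC}$ locally free, hence $\ee\sta\Omega_{\cZ/\cC}\dual$ is a vector bundle on $\cC_\fS$; since $\pi_\fS$ is proper and flat with at-worst-nodal one-dimensional fibres, cohomology and base change represents $R\pi_{\fS\lsta}$ of this vector bundle by a two-term complex of locally free sheaves on $\fS$.

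Finally, to verify that $\phi_{\fS/\fA}$ is an obstruction theory I would apply the Behrend--Fantechi lifting criterion: for a square-zero extension $T\subset T'$ of $\fA$-affine schemes with ideal $J$ and a point $g:T\to\fS$ corresponding to a section $\bar s:\cC_T\to\cZ_T$, I must show that (i) extensions of $g$ to $T'$ form, if non-empty, a torsor under $H^0(\cC_T,\bar s\sta T_{\cZ/\cC})\otimes_{\sO_T}J$, and (ii) the existence obstruction lies in $H^1(\cC_T,\bar s\sta T_{\cZ/\cC})\otimes_{\sO_T}J$ and coincides with the image under $\phi_{\fS/\fA}$ of the universal obstruction of $g$. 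Both statements are classical deformation theory of sections of the smooth morphism $\cZ_T\to\cC_T$, which can be extracted from the distinguished triangle associated to the factorization $\id_{\cC_T}=\pi_T\circ\bar s$ (using that $\LL_{\cC_T/\cC_T}=0$ forces $\LL_{\cC_T/\cZ_T}\cong\bar s\sta\LL_{\cZ_T/\cC_T}[1]$). The main obstacle is the compatibility in (ii): checking that the abstractly constructed $\phi_{\fS/\fA}$ really transports the deformation-theoretic obstruction class to the expected \v Cech class in $H^1(\bar s\sta T_{\cZ/\cC})\otimes J$, in the Artin-stack setting where both $\fS$ and $\fA$ live. This naturality is the delicate technical point, but it is a by-now-standard diagram chase in the derived category of an Artin stack.
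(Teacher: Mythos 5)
Your proposal is correct and follows essentially the same route as the paper: build the arrow from the universal evaluation map $\ee$ by adjunction along $\pi_\fS$ (the paper uses the unit $\TT_{\fS/\fA}\to R^\bullet\pi_{\fS\ast}\pi_\fS\sta\TT_{\fS/\fA}$ on tangent complexes where you pass through Serre duality on cotangent complexes, but these produce the same morphism), note that smoothness of $\cZ\to\cC$ plus properness of the nodal family makes $\EE_{\fS/\fA}$ perfect in $[0,1]$, and then verify the Behrend--Fantechi lifting criterion by reducing to deformation theory of sections of $\cZ_T\to\cC_T$ via Illusie. The compatibility you flag as the delicate point is exactly what the paper isolates into its appendix Lemma \ref{ob-general} together with the identification $\varpi(\be_T,\cZ,\cC)=\kappa(\phi'(\varpi(\mm)))$.
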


\begin{proof}
By our construction, we have the commutative diagrams
\beq\label{first-Q}
\begin{CD}
\fS@<<<\cC_{\fS}@>{\ee}>>\cZ\\
@VVV@VVV@VVV\\
\fA@<<<\cC@>{=}>>\cC,
\end{CD}
\eeq
where the left one is Cartesian. 
Applying the projection formula to
\beq\label{bfQ}
\pi_{\fS}^\ast \TT_{\fS/\fA}\cong \TT_{\cC_\fS/\cC}\lra 
 \ee^\ast \TT_{\cZ/\cC}=
\ee^\ast \Omega\dual_{\cZ/\cC},
\eeq
and using
$$\TT_{\fS/\fA}\lra R^\bullet\pi_{\fS\ast}\pi_\fS\sta\TT_{\fS/\fA},
$$
we obtain
\begin{equation}\label{def--Q}
\phi_{\fS/\fA}: 
\TT_{\fS/\fA}\lra \EE_{\fS/\fA}\defeq R^\bullet \pi_{\fS\ast} \ee^\ast \Omega\dual_{\cZ/\cC}.
\end{equation}
We claim that $\phi_{\fS/\fA}$ is a perfect obstruction theory.

We prove this by applying the criterion in \cite[Thm 4.5]{BF}. 
Given an extension $T\sub T'$ by ideal $J$ with $J^2=0$, and a commutative diagram
\beq\label{lift-Q}
\begin{CD}
T@>{\mm}>>\fS\\
@VVV@VVV\\
T'@>{\nn}>>\fA,
\end{CD}
\eeq
we say that $\mm$ lifts to an $\mm':T' \to \fS$ if $\mm'$ fits into \eqref{lift-Q} to form two commuting triangles.

By standard deformation theory,
the diagram (\ref{lift-Q}) provides a morphism
$$\mm^\ast\LL_{\fS/\fA}\lra \LL_{T/T'}\lra \LL^{\geq -1}_{T/T'}=J[1],
$$
which gives an element
\beq\label{obclass}
\varpi(\mm)\in   \Ext^1_T(\mm^\ast\LL_{\fS/\fA},J)= H^1(T,\mm^\ast\TT_{\fS/\fA}\otimes_{\sO_T} J).
\eeq
Using the morphism $\phi_{\fS/\fA}$ in \eqref{def--Q}, we obtain the homomorphism
$$\phi':H^1(T,\mm^\ast\TT_{\fS/\fA}\otimes_{\sO_T} J)\lra H^1(T,\mm^\ast \EE_{\fS/\fA} \otimes_{\sO_T} J).
$$
We define
$$
\ob(T, T', \mm):= \phi'(\varpi(\mm))\in H^1(T,\mm^\ast \EE_{\fS/\fA} \otimes_{\sO_T} J).
$$


To prove that $\phi_{\fS/\fA}$ is a perfect relative obstruction theory, 
by the criterion in \cite[Thm 4.5 (3)]{BF}, we need to show 
\begin{enumerate}
\item
$\ob(T, T', \mm)=0$ if and only if $\mm$ in (\ref{lift-Q})
can be lifted to $\mm':T'\to \fC$;
\item when $\ob(T,T',\mm)=0$, the set of liftings $\mm': T'\to \fC$ form a torsor under
$H^0(T,\mm^\ast\EE_{\fC/\fA}\otimes_{\sO_T} J)$.
 \end{enumerate}
 
 We now verify (1) and (2). 
Pulling back $\cC$ to $T$ and $T'$ via $\mm$ and $\nn$, we obtain two families
$\pi_T: \cC_T\to T$ and $\pi_{T'}:\cC_{T'}\to T'$; pulling back
$\ee$ to $T$, we have evaluation map $\ee_T:\cC_T\to \cZ$.
Let  
$$\kappa:H^1(T,\mm^\ast\EE_{\fS/\fA}\otimes_{\sO_T} J)
\mapright{\cong} H^1(T,R^\bullet\pi_{T\ast}(\ee_T^\ast \Omega_{\cZ/\cC}\dual\otimes \pi_T\sta J))
$$
be the canonical isomorphism defined by the definition of $\EE_{\fS/\fA}$ (cf. \eqref{def--Q}).

Using the standard property of cotangent complex, the commuting square
\beq\label{lift-CQ}
\begin{CD}
\cC_T@>{\ee_T}>> \cZ\\
@VVV @VVV\\
\cC_{T'}@>{\tilde\nn}>> \cC,
\end{CD}
\eeq
where $\ti\nn$ is the lift of $\nn$ in \eqref{lift-Q}, induces homomorphisms
$$\ee_T^\ast \Omega_{\cZ/\cC}  \cong\ee_T^\ast\LL_{\cZ/\cC}\lra \LL_{\cC_T/\cC_{T'}} = \pi_T^\ast\LL_{T/T'}
\lra \LL_{\cC_T/\cC_{T'}}^{\geq -1}=  \pi_T^\ast J[1].
$$
Their composite associates to an element
$$\varpi(\be_T,\cZ,\cC)\in H^1(\cC_T,\ee_T^\ast \Omega\dual_{\cZ/\cC}  \otimes \pi_T^\ast J)\cong H^1(T,R^\bullet\pi_{T\ast}(\ee_T^\ast \Omega\dual_{\cZ/\cC}\otimes\pi_T^\ast J)).
$$ 
By Lemma \ref{ob-general}, $\varpi(\be_T,\cZ,\cC)=0$ if and only if (\ref{lift-CQ}) admits a lifting $\cC_{T'}\to \cZ$.

As (\ref{lift-CQ}) is the composition of (\ref{lift-Q}) with (\ref{first-Q}), $\varpi(\be_T,\cZ,\cC)=\kappa(\phi'(\varpi(\mm)))$. 
Thus $\ob(T,T',m)=0$ if and only if (\ref{lift-CQ}) has a lifting, which is equivalent to that $\mm$ lifts to an 
$\mm':T' \to \fS$ in (\ref{lift-Q}). This verifies criterion (1).

Finally, when $\ob(T,T',\mm)=0$, any 
two liftings $\cC_{T'}\to \cZ$ differ by a section in $H^0(\cC_T,\ee_T^\ast \Omega\dual_{\cZ/\cC}  \otimes \pi_T^\ast J)$,  and vice versa 
\cite[Thm 2.1.7]{Illusie}. This proves the criterion (2).
These complete the proof of the Proposition. 
\end{proof}

\subsection{Moduli of stable morphisms}\label{sec-2.2}

Using the stack $\fdg$ of curves with line bundles, this construction provides a different perspective of
the moduli of stable morphisms to a projective scheme. 

\begin{defi}
We define $\fdg$ be the groupoid associating to each scheme $S$ the set $\fdg(S)$ of pairs $(\cC_S,\sL_S)$, where
$\cC_S\to S$ is a flat family of connected nodal curves and $\sL_S$ is a line bundle on $\cC_S$ of degree $d$
along fibers of $\cC_S/S$. An arrow from $(\cC_S,\sL_S)$ to $(\cC_S',\sL_S')$ consists of a pair $(\rho,\tau)$, where
$\rho: \cC_S\to\cC_S'$ and $\tau: \rho\sta\sL_S'\to \sL$ are $S$ isomorphisms.
\end{defi}

It is easy to show that $\fdg$ is a smooth Artin stack.
By forgetting the line bundles one obtains an induced morphism
$\fdg\to \fM_g$, where $\fM_g$ is the Artin stack of all connected genus $g$ nodal curves.
For any $\xi=(C,L)\in\fdg$, the automorphism group of $\xi$ relative to $\fM_g$,
(i.e. automorphisms of $L$ that fix $C$,) is $\CC^\ast$. We denote by $(\cC_{\fdg},\cL_{\fdg})$, with
$\pi_{\fdg}: \cC_{\fdg}\to \fdg$ implicitly understood, the universal family of
$\fdg$.

\vsp

We now let $X\sub \Pn$ be a projective scheme.
For the integer $d$ given, (the integer $d$ will be fixed throughout this paper,) we have the moduli of
genus $g$ and degree $d$ stable morphisms to $X$: $\bcM_g(X,d)$. 
We now present it as a moduli of sections. We keep the homogeneous coordinates
$[x_1,\ldots, x_{n+1}]$ of $\Pn$ mentioned in the introduction. The choice of $[x_i]$
provides a presentation 
\beq\label{pre-P}
\Pn=\AA^{n+1 \ast}/\CC\sta,\quad \AA^{n+1 \ast}:=\AA^{n+1}-0.
\eeq
We form the bundle 
$$\Vb(\sL_\fdg^{\oplus (n+1)})\sta=\Vb(\sL_\fdg^{\oplus (n+1)})-0_{\cC_\fdg},
$$
where $0_{\cC_\fdg}$ is the zero section. Using the 
$\CC\sta$-equivariance of the projection $\AA^{n+1 \ast}\to \Pn$ induced by \eqref{pre-P}, we obtain a canonical morphism
$$\Psi: \Vb(\sL_\fdg^{\oplus (n+1)})\sta\lra \Pn.
$$
We let 
$$\cZ_X=\Vb(\sL_\fdg^{\oplus (n+1)})\sta\times_{\Pn} X\sub \Vb(\sL_\fdg^{\oplus (n+1)})\sta.
$$
We let $\fS_X$ be the stack of sections constructed in Subsection \ref{sec2.2} with $\cZ$ replaced by $\cZ_X$. 

\begin{prop}
There is a canonical open immersion of stacks $\bcM_g(X,d)\to \fS_X$, as stacks over $\fM_g$.
\end{prop}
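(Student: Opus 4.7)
The plan is to construct the morphism explicitly on $S$-points and then identify its essential image as the open substack of $\fS_X$ on which the associated map to $X$ is Kontsevich-stable.

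First I would construct the morphism. For $[u\!:\!\cC_S\to X]\in\bcM_g(X,d)(S)$, set $\sL_S:=u\sta\sO_\Pn(1)$, a relative degree $d$ line bundle on $\cC_S/S$, so that $(\cC_S,\sL_S)$ gives a morphism $\rho:S\to\fdg$. The pullbacks $u\sta x_1,\ldots,u\sta x_{n+1}\in H^0(\cC_S,\sL_S)$ have no common zero and thus assemble into a $\cC_S$-morphism $s:\cC_S\to\Vb(\sL_S^{\oplus(n+1)})\sta$. Since $\Psi\circ s=u$ factors through $X\sub\Pn$, the section $s$ lands in $\cZ_{X,S}:=\cZ_X\times_{\cC_\fdg}\cC_S$, producing an $S$-point of $\fS_X$. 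Functoriality in $S$ and compatibility with the projection to $\fM_g$ are then routine.

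Next I would verify that the morphism is a monomorphism of stacks by comparing automorphisms. Any automorphism $\phi$ of $(u,\cC_S)$ (i.e., $u\circ\phi=u$) gives rise to the automorphism $(\phi,\id_{\sL_S})$ of $(\cC_S,\sL_S,s)$. Conversely, an automorphism $(\phi,\tau)$ of $(\cC_S,\sL_S,s)$ satisfies $\tau(\phi\sta s_i)=s_i$ for all $i$, and because the $s_i$ have no common zero this forces $\tau=\id$ and then $u\circ\phi=u$. Hence the automorphism groupoids match exactly and the morphism is fully faithful.

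Finally I would identify the essential image as an open substack. Starting from any $(\cC_S,\sL_S,s)\in\fS_X(S)$, composing $s$ with $\Psi$ yields a degree $d$ morphism $u_s:\cC_S\to X$ with $u_s\sta\sO_\Pn(1)\cong\sL_S$. The family $(u_s,\cC_S)$ lies in $\bcM_g(X,d)$ precisely when it is fiberwise Kontsevich-stable, equivalently when $\omega_{\cC_S/S}\otimes\sL_S^{\otimes k}$ is relatively ample over $S$ for some $k\gg 0$. Ampleness being an open condition on $S$, this identifies $\bcM_g(X,d)$ with the open substack of $\fS_X$ cut out by stability, proving that the morphism is an open immersion. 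The main, albeit standard, obstacle is the careful verification that Kontsevich-stability defines an open locus of $\fS_X$; everything else amounts to bookkeeping between the data $[u:\cC\to X]$ and the triple $(\cC,\sL,s)$.
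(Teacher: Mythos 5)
Your construction of the morphism is exactly the paper's (pull back $\sO(1)$ and the coordinates $x_i$ to get $\lam:Y\to\fdg$ and a section into $\cZ_X$), and the paper simply asserts the rest is ``direct to check,'' so your filling in of full faithfulness and the openness of the Kontsevich-stable locus is a correct elaboration of the same approach. One small imprecision: in the automorphism comparison, ``$\tau=\id$'' should be read as ``$\tau$ equals the canonical identification $\phi\sta\sL_S\cong\sL_S$ induced by $u\circ\phi=u$,'' the nowhere-common-vanishing of the $s_i$ killing the residual $\CC\sta$-scaling.
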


\begin{proof}
For notational simplicity, in the remainder of this Section, we abbreviate $Y=\bcM_g(X,d)$,
and denote by
$(f_Y,\pi_Y):\cC_Y\to X\times Y$ the universal family. Pulling back
$\sO(1)$, we obtain $\sL_Y=f_Y\sta\sO(1)$; pulling back the homogeneous coordinates $x_i$, (viewing
$x_i\in H^0(\Pn,\sO_{\Pn}(1))$), we obtain $u_i=f_Y\sta x_i$. Since $f_Y$ has degree $d$ along fibers of $\cC_Y/Y$,
$(\cC_Y,\sL_Y)$ defines a morphism 
\beq\label{lam-1}
\lam: Y=\bcM_g(X,d) \lra \fdg;
\eeq
since $f_Y(\cC_Y)\sub X$, $(u_1,\ldots, u_{n+1})$ defines a section
$Y\to \Vb(\sL_\fdg^{\oplus (n+1)})\times_{\fdg} Y$
(of $Y$)
that factors through a section
$$\xi:  Y\to\cZ_X \times_{\fdg}Y.
$$
This defines a morphism $Y\to \fS_X$.

It is direct to check that this is an open immersion, and is a morphism over $\fM_g$.
This proves the Proposition.
\end{proof}

It is worth comparing the relative obstruction theory $\phi_{Y/\fD_g}$ of $\bcM_g(X,d)\to \fdg$
constructed using Subsection \ref{sec2.3} with the relative obstruction theory $\phi_{Y/\fM_g}$ of 
$\bcM_g(X,d)\to \fM_g$ 
given in \cite{BF}.



Following the notation before Proposition \ref{deformation}, we have an evaluation map 
$\ee_Y:\cC_Y\lra \cZ_X$. 
The induced morphism 
$\pi_Y^\ast\TT_{Y/\fdg}\cong\TT_{\cC_Y/\cC_\fdg}\to \ee_Y^\ast\TT_{\cZ_X/\cC_\fdg}$
induces
$$\phi_{Y/\fdg}:\TT_{Y/\fdg}\lra \EE_{Y/\fdg}:=R^\ast\pi\lsta\ee_Y^\ast\TT_{\cZ_X/\cC_\fdg}.$$
Applying Proposition \ref{DEF}, $\phi_{Y/\fdg}$ is a perfect relative obstruction theory 
of $Y\to \fdg$.

\begin{lemm}\label{M-D}
Suppose $X$ is smooth.  The relative obstruction theories
$\phi_{Y/\fdg}$ 
and $\phi_{Y/\fM_g}$ are related by a morphism of distinguished triangles 
$$
\begin{CD}
R^\ast\pi_{Y\ast} \sO_{\cC_Y}@>>>\EE_{{Y}/\fdg}@>>>\EE_{{Y}/\fM_g}@>{+1}>>\\
@AA{||}A@AA{\phi_{{Y}/\fM_g}}A@AA{\phi_{{Y}/\fdg}}A\\
\lam^\ast\TT_{\fdg/\fM_g}[-1]@>>>\TT_{{Y}/\fdg}@>>>\TT_{{Y}/\fM_g}@>{+1}>>.\\
\end{CD}
$$
\end{lemm}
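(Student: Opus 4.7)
The bottom row is the standard tangent-complex distinguished triangle associated to the composition $Y \xrightarrow{\lam} \fdg \to \fM_g$, obtained by dualizing the cotangent exact triangle. For the top row, the key observation is that $\cZ_X = \Vb(\sL_\fdg^{\oplus (n+1)})\sta \times_{\Pn} X$ admits a factorization
$$\cZ_X \xrightarrow{\,q\,} X \times \cC_\fdg \to \cC_\fdg,$$
in which $q$ is a principal $\GG_m$-bundle (pulled back from the tautological $\GG_m$-torsor $\Vb(\sL_\fdg^{\oplus (n+1)})\sta \to \Pn$) and the second map is smooth. This produces a short exact sequence of locally free relative tangent sheaves on $\cZ_X$,
$$0 \to \sO_{\cZ_X} \to \TT_{\cZ_X/\cC_\fdg} \to p\sta \TT_X \to 0,$$
where $\sO_{\cZ_X}$ is the relative tangent of the $\GG_m$-torsor (trivialized by $\text{Lie}\,\GG_m$) and $p \colon \cZ_X \to X$ is the composed projection. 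Pulling back via $\ee_Y$ and applying $R\pi_{Y\ast}$, together with the identification $\EE_{Y/\fM_g} = R\pi_{Y\ast} f_Y\sta \TT_X$ (the Behrend-Fantechi complex for $Y\to\fM_g$), yields the top triangle.

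For the left vertical, compute $\TT_{\fdg/\fM_g}$ directly. Since $\fdg \to \fM_g$ classifies degree-$d$ line bundles with their $B\GG_m$-automorphism stacky structure, a standard deformation computation at $(C,L)$ gives $H^{-1} = H^0(C,\sO_C)$ (the Lie algebra of $\GG_m$) and $H^0 = H^1(C,\sO_C)$ (Picard deformations). Hence $\TT_{\fdg/\fM_g} \cong R\pi_{\fdg\ast}\sO_{\cC_\fdg}[1]$, and flat base change along $\lam$ yields the desired isomorphism $\lam\sta \TT_{\fdg/\fM_g}[-1] \cong R\pi_{Y\ast}\sO_{\cC_Y}$. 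The right square commutes by the functoriality of the Proposition \ref{DEF} construction: both $\phi_{Y/\fdg}$ and $\phi_{Y/\fM_g}$ are manufactured from the adjunctions applied to the evaluation maps $\ee_Y$ and $f_Y$ respectively, and they are intertwined by the quotient map $\ee_Y\sta \TT_{\cZ_X/\cC_\fdg} \to f_Y\sta \TT_X$ coming from the short exact sequence above.

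The main obstacle is the commutativity of the left square, which amounts to checking that the boundary map $\lam\sta \TT_{\fdg/\fM_g}[-1] \to \TT_{Y/\fdg}$ of the bottom triangle, post-composed with $\phi_{Y/\fdg}$, agrees with the canonical inclusion $R\pi_{Y\ast}\sO_{\cC_Y} \hookrightarrow \EE_{Y/\fdg}$ arising from the short exact sequence. This is a Kodaira-Spencer-style verification: $\lam\sta \TT_{\fdg/\fM_g}[-1]$ encodes infinitesimal deformations of the line bundle $\sL_Y$ on $\cC_Y$, whereas the $\sO_{\cZ_X}$-summand of $\TT_{\cZ_X/\cC_\fdg}$ encodes the fibrewise $\GG_m$-scaling direction of $q$; these two agree precisely because the tautological $\GG_m$-torsor $q$ is the frame bundle of $\sL_\fdg$. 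Once this geometric identification is unpacked, the check reduces to a compatibility of natural transformations in the derived category, completing the morphism of distinguished triangles.
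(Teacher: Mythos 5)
Your proposal follows essentially the same route as the paper: both exhibit $\cZ_X$ as a $\CC^\ast$-torsor over $X\times\cC_\fdg$, use the resulting short exact sequence $0\to\sO_{\cZ_X}\to T_{\cZ_X/\cC_\fdg}\to \chi_D^\ast T_{\cC_\fdg\times X/\cC_\fdg}\to 0$ pushed forward by $R\pi_{Y\ast}$ to build the top triangle, and identify the left vertical arrow via the (co)tangent complex of the relative Picard stack $\fdg/\fM_g$. The commutativity of the left square, which you flag as the main point, is exactly what the paper disposes of by its diagram chase using \eqref{bigdia}, so your outline matches the paper's argument in substance.
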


\begin{proof}
Let $\cC_{\fM_g}$ be the universal curve on $\fM_g$; let
$$\chi_M:\cZ_X\lra \cC_{\fM_g}\times X
$$ 
be the morphism so that its first factor is the composite $\cZ_X\to\cC_\fdg\to\cC_{\fM_g}$,
and the second factor is the natural projection. Let
$$f:\cC_Y\lra \cC_{\fM_g}\times X
$$
be the composte of $\ee_Y:\cC_Y\to\cZ_X$ with $\chi_M:\cZ_X\to\cC_{\fM_g}\times X$. Note that the first factor 
of $f$ is the canonical projection induced by $Y\to \fdg\to\fM_g$; its the second factor is $f_Y$. 

Taking the tangent complex relative to $\fM_g$, we obtain
$$\pi_Y^\ast\TT_{Y/\fM_g}\cong\TT_{\cC_Y/\cC_{\fM_g}}\lra f^\ast \TT_{\cC_{\fM_g}\times X/\cC_{\fM_g}}\cong
 f_Y^\ast T_{X}.
$$
This induces 
$$\phi_{Y/\fM_g}:\TT_{Y/\fM_g}\lra \EE_{Y/\fM_g}:=R^\ast\pi\lsta f_Y^\ast T_X,
$$
which is the  perfect relative obstruction theory of $Y\to \fM_g$ defined in \cite{BF}. 

We let $\chi_D:\cZ_X\to\cC_\fdg\times X$ be defined similar to $\chi_M$,  and let $g:\cC_\fdg\times X\to\cC_{\fM_g}\times X$ be the projection.
Note that $g\circ \chi_D=\chi_M$.
By the construction, we have the commutative diagrams
\beq\label{bigdia}
\begin{CD}
\cZ_X@>{\chi_D}>>\cC_\fdg\times X@>{g}>>\cC_{\fM_g}\times X\\
@VV{\rho_0}V@VV{\pi_1}V@VVV\\
\cC_\fdg@=\cC_\fdg@>>>\cC_{\fM_g}.
\end{CD}
\eeq

It induces an exact sequence of locally free sheaves
$$0\lra  T_{\cZ_X/\cC_\fdg\times X}
\lra T_{\cZ_X/\cC_\fdg}\lra \chi_D^\ast T_{\cC_\fdg\times X/\cC_\fdg}
\lra 0.
$$
Since $\chi_D$ is a $\CC\sta$-principal bundle, $\sO_{\cZ_X}\cong T_{\cZ_X/\cC_\fdg\times X}$.
Also we have canonical isomorphism $\chi_D^\ast T_{\cC_\fdg\times X/\cC_\fdg}\cong\chi_M^\ast T_{\cC_{\fM_g}\times X/\cC_{\fM_g}}$.
Let $\lam_C:\cC_Y\to\cC_\fdg$ be induced by $\lam$. The above sequence fits into a
morphism of distinguished triangles
$$
\begin{CD}
\ee_Y^\ast T_{\cZ_X/\cC_\fdg\times X}@>>> \ee_Y^\ast T_{\cZ_X/\cC_\fdg}@>>>\ee_Y^\ast\chi_M^\ast T_{\cC_{\fM_g}\times X/\cC_{\fM_g}}\cong f_X^\ast T_X@>{+1}>>\\
@AAA@AAA@AAA\\
\lam_C^\ast\TT_{\cC_\fdg/\cC_{\fM_g}}[-1]@>>> \TT_{\cC_Y/\cC_\fdg}@>>>\TT_{\cC_Y/\cC_{\fM_g}}@>{+1}>>
\end{CD}
$$
where the left vertical arrow is the composition
$$\lam^\ast\TT_{\cC_\fdg/\cC_{\fM_g}}\cong \ee_Y^\ast\chi_D^\ast \pi_1^\ast T_{\cC_\fdg/\cC_{\fM_g}}\cong \ee_Y^\ast\chi_D^\ast T_{\cC_\fdg\times X/\cC_{\fM_g}\times X}\lra \ee_Y^\ast T_{\cZ_X/\cC_\fdg\times X}[1],
$$
where the last arrow is given by the distinguished triangle of contangent complexes associated to the top row of (\ref{bigdia}).
Here the commutativity of squares in the above diagram can be checked by diagram chasing using \eqref{bigdia}).

Therefore we have a homomorphism of distinguished triangles
$$ 
\begin{CD}
R^\ast\pi_{Y\ast} \sO_{\cC_Y}@>>>\EE_{Y/\fdg}@>>>\EE_{Y/\fM_g}@>{+1}>>\\
@AAA@AA{\phi_{Y/\fdg}}A@AA{\phi_{Y/\fM_g}}A\\
\lambda^\ast\TT_{\fdg/\fM_g}[-1]@>>>\TT_{Y/\fdg}@>>>\TT_{Y/\fM_g}@>{+1}>>.
\end{CD}
$$
By the property of contangent complex of Picard stacks the left vertical arrow
of the above diagram is an isomorphism.
\end{proof}

Let $[{Y}/\fdg]\virt$ and $[{Y}/\fM_g]\virt\in A\lsta {Y}$ be the virtual cycles
using the respective perfect relative obstruction theories. 

\begin{coro}
We have identity 
$$[{Y}/\fdg]\virt=[{Y}/\fM_g]\virt\in A\lsta {Y}.
$$
\end{coro}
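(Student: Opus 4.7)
The plan is to apply the standard compatibility theorem for virtual fundamental classes under a smooth morphism of the base Artin stacks; the work has already been arranged for us by Lemma \ref{M-D}.

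First I would observe that $\fdg \to \fM_g$ is a smooth morphism of Artin stacks: over a curve $C \in \fM_g$, the fiber is the Picard stack $\mathrm{Pic}^d(C)$, which is smooth and has tangent complex $R\pi_\ast \sO_{\cC}[1]$. In particular, $\lam^\ast \TT_{\fdg/\fM_g}[-1] \cong R\pi_{Y\ast}\sO_{\cC_Y}$ as stated by the left vertical isomorphism in Lemma \ref{M-D}. This confirms that the distinguished triangle supplied by Lemma \ref{M-D} is the one produced by the compatibility of relative cotangent complexes along the composite $Y \to \fdg \to \fM_g$, and that the two relative obstruction theories $\phi_{Y/\fdg}$ and $\phi_{Y/\fM_g}$ form a compatible pair in the sense of Behrend-Fantechi.

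Next I would invoke the compatibility criterion for virtual fundamental classes (Proposition 7.5 of \cite{BF}; see also the formalism of compatible obstruction theories): given $Y \to \fdg \to \fM_g$ with $\fdg \to \fM_g$ smooth, and perfect relative obstruction theories $\phi_{Y/\fdg}$ and $\phi_{Y/\fM_g}$ fitting into a morphism of distinguished triangles whose third column is the tautological quasi-isomorphism $\lam^\ast\TT_{\fdg/\fM_g}[-1] \xrightarrow{\sim} R\pi_{Y\ast}\sO_{\cC_Y}$, the associated virtual classes satisfy $[Y/\fdg]\virt = [Y/\fM_g]\virt$ in $A_\ast Y$. The underlying geometric point is that the intrinsic normal cones $\mathfrak{C}_{Y/\fdg}$ and $\mathfrak{C}_{Y/\fM_g}$ embed into the relative obstruction bundles in a compatible way, so that the Gysin pullbacks by the two zero sections yield the same cycle; the smoothness of $\fdg \to \fM_g$ ensures that the extra copy of $\lam^\ast\TT_{\fdg/\fM_g}$ appearing on both sides of the triangle cancels.

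The only step that requires care is checking that Lemma \ref{M-D} provides exactly the compatibility data demanded by the criterion in \cite{BF}: namely, that the morphism of triangles is commutative as a diagram in the derived category of $Y$, and that the leftmost vertical arrow is genuinely an isomorphism (not just a quasi-isomorphism up to shift), which is the content of the last sentence of the proof of Lemma \ref{M-D}. Once this compatibility is in hand, the identity $[Y/\fdg]\virt = [Y/\fM_g]\virt$ follows formally and no further cycle-level computation is needed.
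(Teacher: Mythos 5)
Your proposal is correct and follows essentially the same route as the paper: both arguments take Lemma \ref{M-D} as the compatibility datum for the pair of obstruction theories over the smooth morphism $\fdg\to\fM_g$ (with $\lam^\ast\TT_{\fdg/\fM_g}[-1]\cong R^\ast\pi_{Y\ast}\sO_{\cC_Y}$ supplying the required isomorphism on the left column) and conclude by the Behrend--Fantechi compatibility of virtual classes. The only difference is presentational: you cite the packaged compatibility proposition of \cite{BF}, whereas the paper unpacks its proof, passing to the exact sequence of cone stacks via \cite[prop 2.7]{BF} and checking $\theta^\ast(C_{Y/\fM_g})=C_{Y/\fdg}$ directly before applying the Gysin map.
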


\begin{proof}
 Applying \cite[prop 2.7]{BF} to Lemma \ref{M-D}, we obtain a diagram of cone stacks
$$ 
\begin{CD}
h^1/h^0(R^\ast\pi_{Y\ast}\sO_{\cC_Y}) @>>> h^1/h^0(\EE_{Y/\fdg})@>{\theta}>> h^1/h^0(\EE_{Y/\fM_g})\\
@|@AA{(\phi_{Y/\fdg})\lsta}A@AA{(\phi_{Y/\fM_g})\lsta}A\\
h^1/h^0(\lambda^\ast\TT_{\fdg/\fM_g}[-1])@>>> h^1/h^0(\TT_{Y/\fdg})@>{\theta_{\text{int}}}>> h^1/h^0(\TT_{Y/\fM_g})
\end{CD}
$$
of which the two rows are exact sequence of abelian cone stacks.
Applying argument analogous to the second line in the proof of \cite[Prop 3]{Kresch2}, one checks $(\theta_{\text{int}})\sta(C_{Y/\fM_g})=C_{\fdg/\fM_f}$.
Hence $\theta$ is a quotient of bundle stacks such that $\theta^\ast (C_{Y/\fM_g})=C_{Y/\fdg}$. By projection formula
$$ [Y/\fdg]\virt= [Y/\fM_g]\virt\in A\lsta Y.
$$
This proves the Corollary.
\end{proof}

\section{Gromov-Witten invariant of the GSW model }
\def\upf{^{\oplus 5}}
\def\umtf{{-\otimes 5}}

In this section, we will construct the moduli of stable morphisms to $\Pf$ coupled with $p$-fields.
We will construct its localized virtual cycle, using Kiem-Li's cosection localized virtual cycles. We define its
degree be the virtual counting of stable maps to $\Pf$ with $p$-field. This class of invariants is a generalization
of genus zero Guffin-Sharpe-Witten model $(K_\Pf, \bw_\Pf)$  \cite{Sharpe}.

\subsection{Moduli of stable maps with $p$-fields}\label{sec3.1}
\def\fu{{\mathfrak u}}

Let $\bcM_g(\Pf,d)$ be the moduli of genus $g$ degree $d$ stable maps to $\Pf$. 
For the moment, we denote by
$(f_{M}, \cC_{M},\pi_{M})$ be the universal family of $\bcM_g(\Pf,d)$, and $\sL_{M}=f_{M}\sta\sO(1)$
the tautological line bundle.  We form 
$$\sP_{M}\defeq \sL_{M}^{-\otimes 5}\otimes \omega_{\cC_{M}/M},
$$
and call it the auxiliary invertible sheaf on $\bcM_g(\Pf,d)$.

We define the moduli of genus $g$ degree $d$ stable morphisms with $p$-fields be the direct image cone:
\beq\label{M-P}
\cpgp\defeq \bcM_g(\Pf,d)^p\defeq C(\pi_{M\ast} \sP_{M}). 
\eeq
(We abbreviate it to $\cpgp$, as indicated above.)

\vsp
Like before, we can embed $\cpgp$ into the moduli of sections for a choice of $\cZ\to\fdg$.
Let $[x_1,\ldots,x_5]$ be the homogeneous coordinates of $\Pf$ specified in the Introduction.
Let 
$$(f_{\cpgp},  \pi_{\cpg}):  \cC_{\cpgp}\to \Pf\times\cpgp
$$ 
be the universal map
of $\cpgp$. We let $\sL_{\cpgp}=f_{\cpg}\sta\sO(1)$ the tautological invertible sheaf; let
$\sP_{\cpgp}=\sL_{\cpgp}^{-\otimes 5}\otimes\omega_{\cC_{\cpgp}/\cpgp}$ be the auxiliary invertible sheaf,
and let 
\beq\label{pu}
\fp\in \Gamma(\cC_{\cpgp},\sP_{\cpgp})\and
\fu_i=f_{\cpg}\sta x_i\in \Gamma(\cC_{\cpg}, \sL_{\cpg})
\eeq
be the universal $p$-field and the tautological coordinate functions, respectively.
Note that $(\cC_{\cpgp}, \sL_{\cpgp})$ induces a morphism $\cpgp\to\fdg$ so that $(\cC_\cP,\sL_\cP)$ is isomorphic
to the pull back of $(\cC_\fdg,\sL_\fdg)$.

Using the line bundle $\sL_{\fdg}$ on $\cC_{\fdg}$ and its auxiliary invertible sheaf
$$\sP_\fdg=\sL_{\fdg}^{-\otimes 5}\otimes\omega_{\cC_{\fdg}/\fdg},
$$
we form the bundle
$$\cZ\defeq \Vb(\sL_{\fdg}\upf\oplus \sP_\fdg)  
$$
over $\cC_{\fdg}$. 
Then the section $((\fu_i)_{i=1}^5,\fp)$ defines a section of 
$$\cZ\times_{\cC_\fdg}\cC_{\cpgp}\lra \cC_{\cpgp}.
$$
This section induces a $\cC_\fdg$-morphism 
$\cC_{\cpgp}\to \cZ\times_{\cC_\fdg}\cC_\cpgp$. Composed with the projection $ \cZ\times_\fdg\cpgp\to\cZ$, we obtain the 
evaluation morphism over  $\cC_\fdg$:
\beq\label{ev-p}
\ti\ee: \cC_\cpgp\lra \cZ.
\eeq

\begin{prop}\label{obW}
The pair
$\cpdp\to \fdg$ admits a perfect relative obstruction theory
$$\phi_{\cpdp/\fdg}:\TT_{\cpdp/\fdg}\lra \EE_{\cpdp/\fdg}\defeq  
R^\ast\pi_{\cpgp \ast} (\sL_{\cpgp}^{\oplus 5}\oplus   \sP_{\cpgp}).
$$ 
\end{prop}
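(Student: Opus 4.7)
The plan is to realize $\cpdp$ as an open substack of a moduli of sections over $\cC_\fdg$ and invoke Proposition \ref{DEF}. Set
\[
\cZ^\circ \;=\; \Vb(\sL_\fdg^{\oplus 5})\sta \times_{\cC_\fdg} \Vb(\sP_\fdg),
\]
i.e.\ the open subscheme of $\cZ=\Vb(\sL_\fdg^{\oplus 5}\oplus \sP_\fdg)$ obtained by deleting the zero section of the $\sL_\fdg^{\oplus 5}$-factor. As an open in a vector bundle, $\cZ^\circ\to\cC_\fdg$ is representable, quasi-projective, and smooth, so the groupoid $\fS$ of sections (Subsection \ref{sec2.2}) built from $\cZ^\circ$ is an Artin stack over $\fdg$ to which Proposition \ref{DEF} applies.

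Next, combine the identification in Subsection \ref{sec-2.2} of $\bcM_g(\Pf,d)$ with an open substack of the moduli of sections of $\Vb(\sL_\fdg^{\oplus 5})\sta$ with the Corollary in Subsection \ref{sec2.2} applied to the direct image cone $C(\pi_{M\ast}\sP_M)$. This yields a canonical open immersion of stacks over $\fdg$,
\[
\cpdp \hookrightarrow \fS,
\]
which on points sends $((u,C),\fp)$ to the section $((\fu_i),\fp)$ of \eqref{pu}, with evaluation $\ti\ee$ as in \eqref{ev-p}. Because an open immersion is étale, both $\TT_{\cpdp/\fdg}$ and the complex $R^\bullet\pi_\ast\ti\ee\sta\Omega\dual_{\cZ^\circ/\cC_\fdg}$ are identified with the restrictions of their $\fS$-counterparts, so the perfect relative obstruction theory furnished by Proposition \ref{DEF} on $\fS\to\fdg$ restricts to a perfect relative obstruction theory $\phi_{\cpdp/\fdg}$ on $\cpdp\to \fdg$.

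It remains to identify the target as stated. Since $\cZ^\circ$ is open in the total space of the vector bundle $\sL_\fdg^{\oplus 5}\oplus \sP_\fdg$ on $\cC_\fdg$, the relative cotangent sheaf is canonically
\[
\Omega_{\cZ^\circ/\cC_\fdg} \;\cong\; \rho_0\sta\bl\sL_\fdg^{\oplus 5}\oplus \sP_\fdg\br\dual,
\]
where $\rho_0:\cZ^\circ\to\cC_\fdg$ is the projection. The composite $\rho_0\circ \ti\ee:\cC_\cpgp\to\cC_\fdg$ is the canonical projection, so dualizing and pulling back gives $\ti\ee\sta\Omega\dual_{\cZ^\circ/\cC_\fdg}\cong \sL_\cpgp^{\oplus 5}\oplus\sP_\cpgp$, and hence
\[
R^\bullet\pi_{\cpgp\ast}\,\ti\ee\sta\Omega\dual_{\cZ^\circ/\cC_\fdg} \;\cong\; R^\bullet\pi_{\cpgp\ast}(\sL_\cpgp^{\oplus 5}\oplus \sP_\cpgp) \;=\; \EE_{\cpdp/\fdg}.
\]

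The one nontrivial input is the open immersion $\cpdp\hookrightarrow \fS$: it requires matching the tautological pair of stable map and $p$-field on $\cpdp$ with the universal section on the sections stack, and checking that stability cuts out an open substack. This runs parallel to the argument in Subsection \ref{sec-2.2} for $\bcM_g(\Pf,d)$, with the $\sP_\fdg$-factor carried along tautologically, so I expect it to be bookkeeping rather than a genuine obstacle. Everything else is an application of Proposition \ref{DEF} together with the standard cotangent sheaf computation for a vector bundle.
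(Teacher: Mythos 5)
Your proposal is correct and follows essentially the same route as the paper: realize $\cpdp$ inside the moduli of sections of (an open part of) $\Vb(\sL_\fdg^{\oplus 5}\oplus\sP_\fdg)$ over $\cC_\fdg$, apply Proposition \ref{DEF} to the evaluation morphism $\ti\ee$, and use $\Omega\dual_{\cZ/\cC_\fdg}\cong\sL_\fdg^{\oplus 5}\oplus\sP_\fdg$ to identify the target. The paper's one-line proof leaves implicit the points you spell out (deleting the zero section of the $\sL_\fdg^{\oplus 5}$-factor and the open immersion into the sections stack), so your version is just a more detailed rendering of the same argument.
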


\begin{proof}
 The proof follows from Proposition \ref{DEF} applied to the (evaluation) morphism $\ti\ee$,
using that $\Omega_{\cZ/\cC_\fdg}\dual=\sL_{\fdg}\upf\oplus \sP_\fdg$.
\end{proof}

\subsection{Constructing a cosection}
We define a multi-linear bundle morphism
 \begin{equation}\label{co}
h_1: \Vb(\sL_{\fdg}\upf \oplus \sP_{\fdg})\lra \Vb(\omega_{\cC_{\fdg}/\fdg}),
\quad h_1(z,p)=p\cdot \sum_{i=1}^5 z_i^5,
\end{equation}
where $(z,p)=\bl (z_i)_{i=1}^5,p)\in \Vb(\sL_{\fdg}\upf \oplus \sP_{\fdg})$.  
This map is based on the dual-pairing $\sL_{\fdg}\of\otimes \sP_{\fdg}\to \omega_{\cC_{\fdg}/\fdg}$.

The morphism $h_1$ induces a homomorphism of tangent complexes
$$
dh_1: \TT_{\Vb(\sL_{\fdg}\upf \oplus \sP_{\fdg})/\cC_{\fdg}}
\mapright{} h_1^\ast 
\TT_{\Vb(\omega_{\cC_{\fdg}/\fdg})/\cC_{\fdg}}=h_1^\ast 
\Omega\dual_{\Vb(\omega_{\cC_{\fdg}/\fdg})/\cC_{\fdg}}.
$$
In explicit form,  for any closed $\xi\in \cC_{\cpg}$ and $(z,p)\in \Vb(\sL_{\fdg}\upf \oplus \sP_{\fdg})|_\xi$, $dh_1|_{(z,p)}$ sends
$$
((\mathring z_i),\mathring p)\in 
\Omega\dual_{\Vb(\sL_{\fdg}\upf \oplus \sP_{\fdg})/\cC_{\fdg}}\big|_{(z,p)}=(\sL_{\fdg}\upf \oplus \sP_{\fdg})
\otimes_{\sO_{\cC_{\fdg}}}\kk(\xi)
$$
to
\beq\label{dh}
d{h_1}|_{(z,p)}(\mathring z,\mathring p)=(\sum_{i=1}^5 z_i^5)\cdot \mathring p+ p\cdot \sum_{i=1}^5 5z_i^4\cdot \mathring z_i.
\eeq
 
On the other hand, by pulling back $d{h_1}$ to $\cC_{\cpdp}$ via 
the evaluation morphism $\tilde\ee$ (cf. \eqref{ev-p}) one has
 (homomorphism and canonical isomorphisms)
$$\ti\ee^\ast (dh_1):\ti\ee^\ast \Omega\dual_{\Vb(\sL_{\fdg}\upf \oplus \sP_{\fdg})/\cC_{\fdg}}\lra 
\ti\ee^\ast h_1^\ast\Omega\dual_{\Vb(\omega_{\cC_{\fdg}/\fdg})/\cC_{\fdg}}.
$$
Because the right hand side is canonically isomorphic to 
$\omega_{\cC_{\cpdp}/{\cpdp}}$,
applying $R^\bullet\pi_{{\cpdp}\ast}$, we obtain
\beq\label{map}
\begin{CD}
\sigma_1^\bullet:\EE_{{\cpdp}/\fdg}\lra R^\bullet\pi_{{\cpdp}\ast}(\ee^\ast
h_1^\ast \Omega\dual_{\Vb(\omega_{\cC_{\fdg}/\fdg})/\cC_{\fdg}})\cong R^\bullet\pi_{{\cpdp}\ast}(\omega_{\cC_{\cpdp}/{\cpdp}}).
\end{CD}
\eeq
We define 
\beq\label{si-cosection}
\sigma_1:=H^1(\sigma_1^\bullet):\Ob_{{\cpdp}/\fdg}=H^1(\EE_{{\cpdp}/\fdg})\lra 
R^1\pi_{{\cpdp}\ast}(\omega_{\cC_{\cpdp}/{\cpdp}})\cong \sO_{\cpdp}.
\eeq
 
By Proposition \ref{obW}, $\sigma_1$ is in the form (of homomorphism of sheaves)
$$\sigma_1:  \Ob_{\cP/\fdg}=R^1\pi_{\cpdp \ast}\sL_{\cpdp}^{\oplus 5}\oplus R^1\pi_{\cpgp\ast} \sP_{\cpdp}\lra \sO_{\cpdp}.
$$




\subsection{Degeneracy loci of the cosection}

We give a coordinate expression of the cosection $\sigma_1$.
We denoting by
$\fu_i=f_{\cpgp}\sta x_i$ and $\fp\in\Gamma (\cC_{\cpgp},\sP_{\cpgp})$
be the 
tautological section of $\cpgp$.
Take any \'etale chart $T\to\cpgp$, and let $\cC_T=\cC_{\cpgp}\times_{\cpgp} T$.
For 
$$\mathring p\in H^1(\cC_T,\sP_{\cpgp})\and
\mathring {u}=(\mathring{u}_i)_{i=1}^5 \in H^1(\cC_T, \sL_{\cpgp}^{\oplus 5}),
$$
we define 
\begin{eqnarray}\label{zeta}
\zeta(\mathring p,\mathring{u}):=5p\cdot\sum_{i=1}^5 {u}_i^4\cdot\mathring{{u}_i}+(\sum_{i=1}^5{u}_i^5)\cdot \mathring p,
\end{eqnarray}
where $p$ and $u_i$ are the pull back of $\fp$ and $\fu_i$ to $\cC_T$, respectively.
The expression \eqref{zeta} is an element in $R^1\pi_{\cpgp\ast}(\omega_{\cC_{\cpgp}/\cpgp})\otimes_{\sO_{\cpgp}}\sO_T\cong \sO_T$.

One checks that this defines a homomorphism
$$\zeta:R^1\pi_{\cpdp \ast}\sL_{\cpdp}^{\oplus 5}\oplus R^1\pi_{\cpgp\ast} \sP_{\cpdp}\lra \sO_{\cpdp}.
$$

\begin{lemm}\label{same}
The two homomorphisms $\zeta$ and $\sigma_1$ coincide.
\end{lemm}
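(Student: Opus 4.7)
The plan is to unwind the definition of $\sigma_1$ one step at a time and check that at the level of the sheaves in question the bundle map it comes from is exactly the one defining $\zeta$. The key ingredients are already supplied: the explicit formula \eqref{dh} for $dh_1$, the base change identification $\tilde\ee^\ast \Omega^\vee_{\Vb(\sL_\fdg\upf\oplus \sP_\fdg)/\cC_\fdg} \cong \sL_\cP^{\oplus 5}\oplus \sP_\cP$ (since $\tilde\ee$ is a section of a vector bundle, its pullback of the vertical tangent sheaf equals the bundle itself), and the analogous identification $\tilde\ee^\ast h_1^\ast \Omega^\vee_{\Vb(\omega_{\cC_\fdg/\fdg})/\cC_\fdg}\cong \omega_{\cC_\cP/\cP}$.

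First I would rewrite $\tilde\ee^\ast(dh_1)$ as a genuine map of locally free sheaves on $\cC_\cP$. By definition $\tilde\ee$ is the section determined by the tautological data $(\fu_1,\dots,\fu_5,\fp)$, so pulling back formula \eqref{dh} along $\tilde\ee$ substitutes $z_i\mapsto \fu_i$ and $p\mapsto \fp$. Hence $\tilde\ee^\ast(dh_1)$ is the $\sO_{\cC_\cP}$-linear map
\[
\sL_\cP^{\oplus 5}\oplus \sP_\cP \lra \omega_{\cC_\cP/\cP},\qquad (\mathring u_1,\dots,\mathring u_5,\mathring p)\longmapsto \Bigl(\sum_{i=1}^5 \fu_i^5\Bigr)\mathring p + 5\fp\sum_{i=1}^5 \fu_i^4\mathring u_i,
\]
which is exactly the integrand appearing inside the definition \eqref{zeta} of $\zeta$.

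Next I would apply $R^\bullet\pi_{\cP\ast}$ and take $H^1$. By Proposition \ref{obW}, $H^1(\EE_{\cP/\fdg})=R^1\pi_{\cP\ast}(\sL_\cP^{\oplus 5}\oplus \sP_\cP)$, and by \eqref{map} followed by the Serre-duality identification $R^1\pi_{\cP\ast}\omega_{\cC_\cP/\cP}\cong \sO_\cP$, the map $\sigma_1$ is precisely $R^1\pi_{\cP\ast}$ of the pulled-back differential computed above. This is the same assignment as $\zeta$ given in \eqref{zeta}, so $\sigma_1 = \zeta$ as homomorphisms of sheaves on $\cP$.

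The only delicate point, and the one I would be most careful about, is compatibility of the two canonical isomorphisms involved: the identification $R^1\pi_{\cP\ast}\omega_{\cC_\cP/\cP}\cong \sO_\cP$ used in \eqref{si-cosection} must be the standard one from relative Serre duality (i.e.\ the trace), so that cup product with a global section $s\in H^0(\cC_\cP,\sL_\cP^{-\otimes 5})$ realises the pairing $R^1\pi_{\cP\ast}(\sL_\cP^{\otimes 5})\to \sO_\cP$ as multiplication by $s$ followed by trace. This is exactly what makes the image of $(\mathring u,\mathring p)$ under $R^1\pi_{\cP\ast}(\tilde\ee^\ast dh_1)$ equal to $\zeta(\mathring p,\mathring u)$ as an element of $\sO_\cP$; everything else is formal naturality of pull-back and of $R^\bullet\pi_{\cP\ast}$.
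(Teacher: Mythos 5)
Your proposal is correct and follows the same route as the paper, which simply remarks that the identity follows from the explicit affine expression of $dh_1$ generalizing \eqref{dh}; you have just written out the substitution $z_i\mapsto \fu_i$, $p\mapsto\fp$ and the application of $R^1\pi_{\cP\ast}$ that the paper leaves implicit. Your closing remark about the Serre-duality trace being the identification used in \eqref{si-cosection} is the right point to flag, but it is exactly the convention the paper is using, so nothing further is needed.
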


\begin{proof}
This follows from the explicit expression of $dh_1$ in affine coordinate generalizing the expression \eqref{dh}.
It is straightforward. 
\end{proof}

\begin{defi}
We define the degeneracy loci of $\sigma_1$ be 
$$D(\sigma_1)=\Bigl\{\xi\in\cpdp\,\big|\, \sigma_1|_\xi: \Ob_{{\cpgp}/\fdg}\otimes_{\sO_{\cpdp}}\kk(\xi)\lra \kk(\xi) \ \text{vanishes}\,\Bigr\}.
$$
\end{defi}

%
%

Following our convention, we dnote by $Q\sub\Pf$
the quintic threefold defined by $\sum x_i^5=0$. We
let $\bcM_g(Q,d)$ be the moduli of genus $g$ degree $d$ stable morphisms to $Q$.
Using $\bcM_g(Q,d)\sub\bcM_g(\Pf,d)$, we obtain embedding 
$$\bcM_g(Q,d)\sub \bcM_g(\Pf,d)\sub\cpgp,
$$
where the second inclusion is by assigning zero $p$-fields.

\begin{prop}\label{deg}
The degeneracy loci of $\sigma_1$ is $\bcM_g(Q,d)\sub \cpdp$; it is proper.
\end{prop}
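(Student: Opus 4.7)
The plan is to use the explicit coordinate expression of $\sigma_1$ given in Lemma \ref{same} and translate the pointwise vanishing condition into algebraic equations on $[u,C,p]$ via Serre duality, then identify those equations with membership in $\bcM_g(Q,d)$ equipped with the zero $p$-field.

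First I fix a geometric point $\xi=[u,C,p]\in\cpdp$, and set $L=u\sta\sO_\Pf(1)$, $u_i=u\sta x_i\in H^0(C,L)$, and $P=L\umtf\otimes\omega_C$, so $p\in H^0(C,P)$. By Lemma \ref{same}, the fiber of $\sigma_1$ at $\xi$ is the linear map
\[
H^1(C,L^{\oplus 5})\oplus H^1(C,P)\lra H^1(C,\omega_C)\cong\kk,\quad (\mathring u,\mathring p)\mapsto 5p\sum_{i=1}^5 u_i^4\cdot\mathring u_i+\Bigl(\sum_{i=1}^5 u_i^5\Bigr)\cdot\mathring p.
\]
The second summand is multiplication by $\sum u_i^5\in H^0(C,L^{\otimes 5})$, which by Serre duality induces the zero map $H^1(C,P)\to H^1(C,\omega_C)$ if and only if $\sum u_i^5=0$. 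Each $i$-th contribution to the first summand is multiplication by $5pu_i^4\in H^0(C,L\upmo\otimes\omega_C)$, which by Serre duality is zero on $H^1(C,L)$ if and only if $p u_i^4=0$. Thus $\xi\in D(\sigma_1)$ if and only if $\sum u_i^5=0$ and $p u_i^4=0$ for $i=1,\ldots,5$.

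Next I interpret these conditions geometrically. The equation $\sum u_i^5=0\in H^0(C,L^{\otimes 5})$ is exactly the condition that the stable map $u:C\to\Pf$ factors through $Q$. For the second family of conditions, I use that $[u,C]$ is a stable map to $\Pf$, so $(u_1,\ldots,u_5)$ has no common zero on $C$; hence at every point $x\in C$ some $u_i^4(x)\neq0$, which forces $p(x)=0$, giving $p\equiv 0$. Conversely, if $p=0$ and the image lies in $Q$, both conditions hold trivially. This identifies $D(\sigma_1)$ with $\bcM_g(Q,d)\subset\cpdp$ (embedded with zero $p$-field). Properness is then immediate: $\bcM_g(Q,d)$ is the moduli of genus $g$ degree $d$ stable maps to the projective scheme $Q$, which is a proper Deligne-Mumford stack. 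The only step that requires care is the Serre duality identification that splits the vanishing of the fiber functional into the two independent conditions $\sum u_i^5=0$ and $pu_i^4=0$; the rest is formal.
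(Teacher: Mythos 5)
Your proof is correct and follows essentially the same route as the paper's: both use the coordinate expression of $\sigma_1$ from Lemma \ref{same}, apply Serre duality on the (connected, nodal) curve $C$ to reduce the pointwise degeneracy of the fiber map to the section-level conditions $\sum u_i^5=0$ and $p\,u_i^4=0$, and then invoke the fact that the $u_i$ have no common zero to force $p\equiv 0$. The only cosmetic difference is that you organize the argument as a clean if-and-only-if decomposition over the two summands of the obstruction space (and explicitly record properness of $\bcM_g(Q,d)$), whereas the paper runs the contrapositive case analysis; the content is identical.
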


\begin{proof}
Let $\xi=(C,L,\phi,p)\in \cpdp$, where $\phi=(\phi_i)_{i=1}^5\in H^0(C,L\upf)$.
The restriction of $\sigma_1=\zeta$ to $\xi$ takes the form
$\sigma_1|_{\xi}(\mathring p,\mathring \phi)=5p\sum \phi_i^4\mathring \phi_i+\sum \phi_i^5 \mathring p$.

Suppose $\sum \phi_i^5\ne 0$, then by Serre duality, we can find 
$\mathring \fp\in H^1(C, L^{-\otimes 5}\otimes \omega_C)$
so that $\mathring p\cdot \sum \phi_i^5\ne 0\in H^1(C,\omega_C)$. 
Letting $\mathring\phi_i=0$, we obtain $\sigma_1|_\xi\ne 0$.

Suppose $\sum\phi_i^5=0$ and $p\ne 0$. Then since $\phi_i$ have no common vanishing locus, 
for some $k$, $p\cdot \phi_k^4 \neq 0$. By Serre duality, we can find a $\mathring \phi_k$ so that
$p\cdot \phi_k^4\cdot\mathring \phi_k\ne 0\in H^1(C,\omega_C)$. By choosing other $\mathring\phi_i=0$, we obtain
the surjectivity of $\sigma_1|_\xi$.
This proves that the degeneracy loci (i.e. the non-surjective loci) of $\sigma_1$ is the collection of $(C,L,\phi,p)$
such that $\sum\phi_i^5=0$ and $p=0$. This set is $\bcM_g(Q,d)\sub\cpdp$.
\end{proof}

\subsection{The cosection factorizes}

Let $q:\cpdp\to \fdg$ be the tautological morphism. We form the distinguished triangle 
\beq\label{dt1}
q\sta \LL_{\fdg}\lra \LL_{\cpdp}\lra \LL_{\cpdp/\fdg}\mapright{\delta} q\sta \LL_{\fdg}[1].
\eeq
Composing $\phi_{\cpdp/\fdg}:\TT_{\cpdp/\fdg}\to \EE_{\cpdp/\fdg}$ with the dual of $\delta$ in the above
distinguished triangle, we obtain the morphism
$$\phi_{\cpdp/\fdg}\circ \delta\dual: q\sta \TT_{\fdg}\lra \TT_{\cpdp/\fdg}[1]\lra  \EE_{\cpdp/\fdg}[1].
$$ 
Denoting $\eta=H^0(\phi_{\cpdp/\fdg}\circ \delta\dual)$, we obtain the composite
\beq\label{tang}
\eta:  q\sta T_{\fdg}\lra
H^1(\TT_{\cpdp/\fdg})\lra H^1(\EE_{\cpdp/\fdg})=\Ob_{\cpdp/\fdg}.
\eeq
Following the construction in \cite[(4.3)]{KL}, the cokernel of (\ref{tang}) is the absolute obstruction sheaf of
$\cpdp$, which we denote by $\Ob_{\cpdp}$ . 

In this subsection, we show

\begin{prop}
\label{hahaha}
The cosection $\sigma_1:\Ob_{\cpdp/\fdg}\to \sO_\cpdp$ lifts to a  $\bar\sigma_1: \Ob_\cpdp\to \sO_\cpdp$.
\end{prop}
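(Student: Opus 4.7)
The plan is to prove $\sigma_1\circ\eta=0$ as morphisms of sheaves on $\cpdp$; by the Kiem-Li cosection-lifting criterion \cite[\S 4]{KL}, this is precisely what is needed to obtain $\bar\sigma_1:\Ob_\cpdp\to\sO_\cpdp$. By functoriality of the constructions it suffices to check this after base change to $k$-points. Fix $\xi=(C,L,(u_i),p)\in\cpdp(k)$ and $v\in T_\fdg|_{(C,L)}$ corresponding to a first-order deformation $(C',L')$ over $B=\Spec k[\epsilon]/(\epsilon^2)$. Unravelling the construction of $\phi_{\cpdp/\fdg}$ in Proposition \ref{obW}, $\eta(v)\in H^1(C,\sL^{\oplus 5}\oplus\sP)$ is exactly the obstruction to lifting the tautological $(u_i,p)$ to $(u'_i,p')$ on $(C',L')$. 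Choosing local lifts $(u_{\alpha,i},p_\alpha)\in\Gamma(U'_\alpha,L'^{\oplus 5}\oplus P')$ on an affine cover $\{U_\alpha\}$, one has the \v Cech representative
\[
\eta(v)_{\alpha\beta}=\epsilon^{-1}\bigl(u_{\alpha,i}-u_{\beta,i},\ p_\alpha-p_\beta\bigr)\in\Gamma(U_{\alpha\beta},\sL^{\oplus 5}\oplus\sP).
\]

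Applying the explicit formula $\sigma_1=\zeta$ of Lemma \ref{same} and expanding $p_\alpha\sum_i u_{\alpha,i}^5$ to first order modulo $\epsilon^2$, one verifies directly that
\[
\sigma_1(\eta(v))_{\alpha\beta}=5p\sum_i u_i^4\cdot\epsilon^{-1}(u_{\alpha,i}-u_{\beta,i})+\sum_i u_i^5\cdot\epsilon^{-1}(p_\alpha-p_\beta)=\epsilon^{-1}(F_\alpha-F_\beta),
\]
where $F_\alpha:=p_\alpha\sum_i u_{\alpha,i}^5\in\Gamma(U'_\alpha,\omega_{C'/B})$ is a local lift of the globally well-defined section $F:=p\sum_i u_i^5\in H^0(C,\omega_C)$. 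Thus $\sigma_1(\eta(v))\in H^1(C,\omega_C)$ is precisely the \v Cech obstruction to assembling the $\{F_\alpha\}$ into a global extension of $F$ over $\omega_{C'/B}$.

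This obstruction vanishes by local freeness of the Hodge bundle on $\fdg$. Indeed, for any family $\pi:\cC\to S$ of connected arithmetic-genus-$g$ nodal curves, the trace map gives $R^1\pi_\ast\omega_\pi\cong\sO_S$, so $h^1$ is constant and cohomology-and-base-change forces $\pi_\ast\omega_\pi$ to be locally free of rank $g$. Applied to $\pi_\fdg:\cC_\fdg\to\fdg$, this says that $H^0(C'/B,\omega_{C'/B})$ is a free $k[\epsilon]/(\epsilon^2)$-module of rank $g$ whose reduction modulo $\epsilon$ surjects onto $H^0(C,\omega_C)$. Hence $F$ lifts to $\omega_{C'/B}$, so $\sigma_1(\eta(v))=0$. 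Running the same argument over a test scheme $T\to\cpdp$ yields $\sigma_1\circ\eta=0$ as sheaf morphisms, and the lift $\bar\sigma_1$ exists.

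The principal obstacle is the middle step: rigorously confirming that the \v Cech formula $\epsilon^{-1}(F_\alpha-F_\beta)$ for $\sigma_1(\eta(v))$ is produced by the sheaf-level composition $\sigma_1\circ\eta$ defined via $dh_1$ in \eqref{map}. This requires careful tracking of the canonical identification $h_1^\ast\Omega^\vee_{\Vb(\omega_{\cC_\fdg/\fdg})/\cC_\fdg}\cong\omega_{\cC_\cpdp/\cpdp}$, of the Serre-duality/base-change isomorphism built into the definition of $\sigma_1^\bu$, and of the description of the connecting map $\eta$ in \eqref{tang}. Once this identification is in place, the Hodge-bundle argument is automatic.
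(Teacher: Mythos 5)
Your proof is correct in substance and rests on exactly the mechanism the paper uses, but you execute it quite differently. The paper (Lemma \ref{cone}) packages the section $F=\fp\cdot(\fu_1^5+\cdots+\fu_5^5)$ into a classifying morphism $\Phi_\eps:\cpdp\to\fC_\omega=C(\pi_{\ast}\omega_{\cC_{\fdg}/\fdg})$ to the direct image cone of the dualizing sheaf, observes that $h_1$ intertwines the two evaluation maps (diagram \eqref{comm}), and concludes that $\sigma_1^\bullet\circ\phi_{\cpdp/\fdg}$ factors through the obstruction assignment of $\fC_\omega/\fdg$ --- which vanishes on $H^1$ because $\fC_\omega\to\fdg$ is a vector bundle, i.e.\ smooth. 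This yields the stronger statement that $\sigma_1$ annihilates the entire image of $H^1(\TT_{\cpdp/\fdg})$, and it does so at the level of complexes over all of $\cpdp$, so no base-change issues arise. Your version makes the same point concretely: the \v Cech cocycle $\epsilon^{-1}(F_\alpha-F_\beta)$ is the obstruction to extending $F$ over the deformed curve, and it dies because the Hodge bundle is locally free --- which is precisely the smoothness of $\fC_\omega\to\fdg$ in the paper's language. What your route buys is transparency: one sees exactly which cocycle is killed and why, without introducing the auxiliary stack $\fC_\omega$ or the derived diagram \eqref{comm2}. What it costs is the final upgrade from closed points to a sheaf-level identity: since $\cpdp$ is in general non-reduced, vanishing of $\sigma_1\circ\eta$ at every $k$-point does not by itself force $\sigma_1\circ\eta=0$ as a homomorphism of sheaves (a map into $\sO_\cpdp$ may land in the nilradical), so your closing remark about ``running the same argument over a test scheme $T$'' is carrying real weight and should be made explicit --- for instance by performing the \v Cech computation for the tautological first-order data over an affine chart, or by retreating to the complex-level factorization that the paper uses.
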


We continue to use the notation developed in the proof of Proposition \ref{obW}.

\begin{lemm}\label{cone}
The following composition is trivial:
$$
0=H^1(\sigma_1^\bullet\circ\phi_{\cpdp/\fdg}): H^1(\TT_{\cpdp/\fdg}) \mapright{}H^1(\EE_{\cpdp/\fdg})
\mapright{} 
R^1\pi_{\cpdp\ast}\omega_{\cC_{\cpdp}/\cpdp}.
$$
\end{lemm}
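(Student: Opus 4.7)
The plan is to factor the composite $\sigma_1^\bullet\circ\phi_{\cpdp/\fdg}$ in $D(\cpdp)$ through the degree-$0$ coherent sheaf $\pi_{\cpdp\ast}\omega_{\cC_\cpdp/\cpdp}$, at which point vanishing of the induced map on $H^1$ will be automatic.

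The starting observation is that the universal product $W:=\fp\cdot\sum_{i=1}^5 \fu_i^5$ is a global section of $\omega_{\cC_\cpdp/\cpdp}$, hence, by push-forward along the flat family $\pi_\cpdp$, yields a section $s_W\in H^0(\cpdp,\pi_{\cpdp\ast}\omega_{\cC_\cpdp/\cpdp})$ of the pulled-back Hodge bundle. Equivalently, $s_W$ is a morphism $\cpdp\lra\Vb(\pi_{\fdg\ast}\omega_{\cC_\fdg/\fdg})$ over $\fdg$ whose target is smooth over $\fdg$, so the derivative of $s_W$ is a morphism in $D(\cpdp)$
$$
ds_W:\TT_{\cpdp/\fdg}\lra \pi_{\cpdp\ast}\omega_{\cC_\cpdp/\cpdp}
$$
whose target is concentrated in degree $0$.

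Tracing the construction in Proposition \ref{obW} and in \eqref{map}, the composite $\sigma_1^\bullet\circ\phi_{\cpdp/\fdg}$ is obtained by applying $R\pi_{\cpdp\ast}$ to the sheaf-level chain map $\bar f:=\tilde\ee^\ast(dh_1)\circ d\tilde\ee:\pi_\cpdp^\ast\TT_{\cpdp/\fdg}\lra \omega_{\cC_\cpdp/\cpdp}$ and precomposing with the unit $\TT_{\cpdp/\fdg}\lra R\pi_{\cpdp\ast}\pi_\cpdp^\ast\TT_{\cpdp/\fdg}$. The explicit formula \eqref{zeta} identifies $\bar f$ with the pointwise differential of $W$ in the $(\fu,\fp)$-directions; consequently $\bar f$ factors through the counit as
$$
\pi_\cpdp^\ast\TT_{\cpdp/\fdg}\;\xrightarrow{\pi_\cpdp^\ast(ds_W)}\;\pi_\cpdp^\ast\pi_{\cpdp\ast}\omega_{\cC_\cpdp/\cpdp}\;\xrightarrow{\mathrm{counit}}\;\omega_{\cC_\cpdp/\cpdp}.
$$
Applying $R\pi_{\cpdp\ast}$ and invoking naturality of the unit together with the triangle identity of the $(\pi_\cpdp^\ast,R\pi_{\cpdp\ast})$-adjunction yields the derived-category factorization
$$
\sigma_1^\bullet\circ\phi_{\cpdp/\fdg}\;=\;\bigl(\TT_{\cpdp/\fdg}\xrightarrow{ds_W}\pi_{\cpdp\ast}\omega_{\cC_\cpdp/\cpdp}\hookrightarrow R\pi_{\cpdp\ast}\omega_{\cC_\cpdp/\cpdp}\bigr).
$$

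Applying $H^1$: the intermediate term is a coherent sheaf concentrated in degree $0$, so $H^1(\pi_{\cpdp\ast}\omega_{\cC_\cpdp/\cpdp})=0$, and hence the composite induces the zero map on $H^1$. The main delicate step is verifying the derived factorization via the adjunction triangle identity; this rests on the essential observation that $W$ is a globally defined section on $\cC_\cpdp$, which is precisely what forces its derivative to live in the degree-$0$ Hodge bundle rather than contributing to the $R^1\pi_{\cpdp\ast}\omega_{\cC_\cpdp/\cpdp}$ part of the target.
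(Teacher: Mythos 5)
Your proof is correct and is essentially the paper's own argument: your section $W=\fp\cdot\sum_{i=1}^5\fu_i^5$ and the induced map $s_W$ to the Hodge bundle are exactly the paper's $\eps$ and $\Phi_\eps:\cpdp\to\fC_\omega=C(\pi_\ast\omega_{\cC_\fdg/\fdg})$, and your factorization of $\bar f$ through the counit is the paper's commutative square \eqref{comm2}. The concluding step is likewise the same: the paper kills the composite because the obstruction map of the smooth bundle $\fC_\omega/\fdg$ vanishes on $H^1$ (equation \eqref{3.11}), which is precisely your observation that the intermediate object $\pi_{\cpdp\ast}\omega_{\cC_\cpdp/\cpdp}=s_W^\ast T_{\fC_\omega/\fdg}$ is concentrated in degree $0$.
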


\begin{proof}
Using the universal curve $\pi_{\fdg}:\cC_{\fdg}\to\fdg$ of $\fdg$,
we introduce the direct image cone $\fC_\omega=C(\pi_{\ast}\omega_{\cC_{\fdg}/\fdg})$; we denote by
$\Vb(\omega_{\cC_{\fdg}/\fdg})$ the underlying bundle of $\omega_{\cC_{\fdg}/\fdg}$. 
Let $\cC_{\fC_\omega}=\cC_{\fdg}\times_{\fdg} \fC_\omega$ be the universal curve over $\fC_\omega$,
 and $\pi_{\fC_\omega} :\cC_{\fC_\omega}\to \fC_\omega$ be the projection. 


Continue to denote by $(f_\cpdp, \cC_\cpdp, \sL_\cpdp)$ the universal family of $\cpdp$,
and using $\fu_i= f_\cpdp\sta x_i\in \Gamma(\cC_\cpdp,\sL_\cpdp)$ and
$
\fp \in \Gamma(\cC_\cpdp,\sP_\cpdp)$ 
the universal coordinate functions and $p$-field (cf. \eqref{pu}), we form 
$$\eps:=\fp\cdot (\fu_1^5+\ldots+ \fu_5^5)  \in\Gamma(\cC_\cpdp,\omega_{\cC_\cpdp/\cpdp}).
$$
It defines a morphism $\Phi_\eps :\cpdp\to \fC_\omega$
so that if we denote by $\ti \Phi_\eps: \cC_{\cpgp}\to \cC_{\fC_\omega}$ the tautological lift of $\Phi_\eps$ using
that both $\cC_\cpgp$ and $\cC_{\fC_\omega}$ are pull backs of $\cC_{\fdg}$, 
and denote by $\ee$ and $\ee'$ the evaluation morphisms as shown,  we
have a commutative diagram of morphisms of stacks over $\cC_{\fdg}$:
\beq\label{comm}
\begin{CD}
\cC_\cpdp@>{\ee}>> \Vb(\sL_{\fdg}\upf \oplus \sP_{\fdg})\\
@VV{\ti \Phi_\eps}V @VV{ h_1}V\\
\cC_{\fC_\omega}@>{\ee'}>>\Vb(\omega_{\cC_{\fdg}/\fdg}).
\end{CD}
\eeq
Here $h_1$ is defined in (\ref{co}).
This shows that the square below is commutative
\beq\label{comm2}
\begin{CD}
\pi_{\cpgp}\sta \TT_{{\cpdp}/{\fdg}} @= \TT_{\cC_{\cpdp}/\cC_{\fdg}}@>{}>> 
\ee^\ast\Omega\dual_{\Vb(\sL_{\fdg}\upf \oplus\sP_{\fdg} )/\cC_{\fdg}}\\
@VVV @VV{}V @VV{dh_1}V\\
\pi_{\cpgp}\sta \ti \Phi_\eps\sta  \TT_{{\fC_\omega}/{\fdg}}@= \ti \Phi_\eps\sta  \TT_{\cC_{\fC_\omega}/\cC_{\fdg}}@>{}>>
\ti \Phi_\eps\sta \ee^{\prime\ast}\Omega\dual_{\Vb(\omega_{\cC_{\fdg}}/\fdg)/\cC_{\fdg}}
\end{CD}
\eeq

Applying $R^1\pi_{\cpgp\ast}$ to the lower horizontal arrow we obtain the obstruction assignment homomorphism
\beq\label{3.11}
(0=)\ H^1(\Phi_\eps\sta\phi_{\fC_\omega/\fdg}): H^1(\Phi_\eps\sta  \TT_{\fC_\omega/{\fdg}})
\lra \Phi_\eps\sta R^1\pi_{\fC_\omega\ast} 
\omega_{\cC_{\fC_\omega}/\fC_\omega},
\eeq
which is trivial since $\fC_\omega$ is a vector bundle over $\fdg$ and $\cC_{\fC_\omega}\to \cC_{\fdg}$ is smooth. 

Therefore, using the Cartesian squares
 \beq\label{huge}
 \begin{CD}
 \cC_\cpdp@>{\ti \Phi_\eps}>> \cC_{\fC_\omega} \\ 
 @VV{\pi_\cpdp}V @VV{\pi_{\fC_\omega}}V \\
 \cpdp@>{\Phi_\eps}>> \fC_\omega \\ 
 \end{CD}
 \eeq
and the commutativity of \eqref{comm2}, applying $R^1\pi_{\cpgp\ast}$, we see that
the composite
$$
H^1(\TT_{\cpdp/\fdg})\mapright{} R^1\pi_{\cpdp\ast}\ee^\ast\Omega\dual_{\Vb(\sL\upf_{\fdg}\oplus\sP_{\fdg} )/\cC_{\fdg}}
\lra R^1\pi_{\cpdp\ast}\ee^\ast  h_1^\ast \Omega\dual_{\Vb(\omega_{\cC_{\fdg}/\fdg})/\cC_{\fdg}}
$$
coincides with the composite
$$H^1(\TT_{\cpdp/\fdg})\lra H^1(\Phi_\eps\sta \TT_{\fC_\omega/\fdg})\mapright{0}
\Phi_\eps\sta R^1\pi_{\fC_\omega\ast}  \Omega\dual_{\Vb(\omega_{\cC_{\fdg}/\fdg})/\cC_{\fdg}}.
$$
Since the composite in the second line is trivial (cf. \eqref{3.11}), the composite in the first line is trivial.
Using 
$$\ee^\ast  h_1^\ast \Omega\dual_{\Vb(\omega_{\cC_{\fdg}/\fdg})/\cC_{\fdg}}
\cong \omega_{\cC_{\cpgp}/\cpgp},
$$
 this is exactly the vanishing desired by the Lemma, 
\end{proof}

\begin{proof}[Proof of Proposition \ref{hahaha}]
The composition of $\sigma$ with (\ref{tang}) is the $H^1$ of the composition 
$$\TT_{\fdg}[-1]\lra \TT_{\cpdp/\fdg}\mapright{\phi_{\cpdp/\fdg}}\EE_{\cpdp/\fdg}\mapright{\sigma_1^\bullet} R^\bullet\pi_{\cpdp\ast}\omega_{\cC_\cpdp/\cpdp},
$$
where the first arrow is the $\delta\dual$ in \eqref{dt1}. Lemma \ref{cone} implies the $H^1$ of the above composition is trivial.
\end{proof}

%
%
%

Here we comment the background of this construction in Super-String Theories.
Let $K_\Pf$ be the total space of the canonical line bundle $\Pf$. The quintic polynomial $\sum x_i^5$
defines a regular map $\bw_\Pf\in \Gamma(\sO_{K_\Pf})$. Its critical locus is the quintic threefold $Q\sub \Pf$. In physics literature, the pair
$(K_\Pf, \bw_\Pf)$ is called a Landau-Ginzburg Model (non-linear).  In \cite{Sharpe},
Guffin and Sharpe constructed a path integral for genus zero A-twisted theory of the Landau Ginzburg space  
$(K_\Pf,\bw_\Pf)$ \cite{Sharpe}. In this paper, we have constructed a mathematical theory generalizing it to all genus. 


\subsection{The virtual dimension}

We calculate the virtual dimension of $\cpgp$. 
Let $\xi=(f,C, L, p)\in \cpgp$ be any closed point. The virtual dimension of $\cpgp/\fdg$ at $\xi$ is
$$\dim H^0(\EE_{\cpgp/\fdg}\otimes_{\sO_{\cpgp}}\kk(\xi))-\dim H^1(\EE_{\cpgp/\fdg}\otimes_{\sO_{\cpgp}}\kk(\xi)).
$$
By the expression of $\EE_{\cpgp/\fdg}$, the above term equals to
$$h^0(L\upf)+h^0(L^{-\otimes 5}\otimes \omega_C)-h^1(L\upf)-h^1(L^{-\otimes 5}\otimes \omega_C)=4-4g.
$$
Because 
$$\dim {\fdg} =\dim \fdg/\fM_g  + \dim \fM_g= (h^0(\sO_C)-1) +  3g-3=4g-4.
$$
The virtual dimension of $\cpgp$ at $\xi$ is zero.

\subsection{Localized virtual cycle}

We apply the theory developed in \cite{KL}. 
We define a subcone-stack 
$$h^1/h^0(\EE_{\cpgp/\fdg})_{\sigma_1}\sub h^1/h^0(\EE_{\cpgp/\fdg})
$$
as follows. Let $\cU\sub \cpgp$ be the locus where $\sigma_1$ is surjective; we denote by
$$D(\sigma_1)=\cpgp-\cU
$$ 
its complement. Since $\sigma_1$ is surjective over $\cU$,
it induces a surjective bundle-homomorphism
\beq\label{sigu} \sigma_1|_\cU: h^1/h^0(\EE_{\cpgp/\fdg})\times_\cP\cU\lra \CC_\cU,
\eeq
where $\CC_\cU$ is the trivial line bundle on $\cU$.
We let $\ker(\sigma_1|_\cU)$ be the kernel bundle-stack of \eqref{sigu}; it is a codimension one
subbundle-stack of $h^1/h^0(\EE_{\cpgp/\fdg})\times_\cP\cU$.

We define
\beq\label{cone-stack}
h^1/h^0(\EE_{\cpgp/\fdg})_{\sigma_1}=\bl h^1/h^0(\EE_{\cpgp/\fdg})\times_{\cpgp} D(\sigma_1)\br \cup 
\ker(\sigma_1|_\cU).
\eeq
It is closed in $h^1/h^0(\EE_{\cpgp/\fdg})$. We endow it with the 
reduced structure. 
(We call \eqref{cone-stack} the kernel of $h^1/h^0(\EE_{\cpgp/\fdg})\to\CC_\cP$
induced by $\sigma_1$, where $\CC_\cP$ is the trivial line bundle on $\cP$.)

\begin{prop}The virtual normal cone cycle $[\bC_{\cpgp/\fdg}]\in Z\lsta h^1/h^0(\EE_{\cpgp/\fdg})$
lies inside $Z\lsta h^1/h^0(\EE_{\cpgp/\fdg})_{\sigma_1}$. 
\end{prop}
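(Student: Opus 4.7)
The plan is to verify the containment fiberwise over $\cpgp$, reducing the question to the vanishing already packaged in Lemma \ref{cone}. By construction of $h^1/h^0(\EE_{\cpgp/\fdg})_{\sigma_1}$, the subcone-stack already agrees with the ambient $h^1/h^0(\EE_{\cpgp/\fdg})$ over the degeneracy locus $D(\sigma_1)$, so the only substantive content is that over $\cU=\cpgp\setminus D(\sigma_1)$ the fiber of the virtual normal cone at every closed $\xi\in\cU$ lands in the codimension-one kernel bundle-stack $\ker(\sigma_1|_\cU)\subset h^1/h^0(\EE_{\cpgp/\fdg})\times_{\cpgp}\cU$.

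The main input is the factorization recorded in \eqref{comm}--\eqref{comm2}: the cosection morphism $\sigma_1^\bullet$ composed with $\phi_{\cpgp/\fdg}$ factors through $\Phi_\eps^\ast\phi_{\fC_\omega/\fdg}$, where $\fC_\omega=C(\pi_{\fdg\ast}\omega_{\cC_\fdg/\fdg})$ is a vector bundle, hence smooth, over $\fdg$. Consequently the obstruction assignment of $\fC_\omega/\fdg$ is identically zero, which is exactly the vanishing \eqref{3.11}. Thus the composite of $\sigma_1$ with the obstruction-assignment map $H^1(\TT_{\cpgp/\fdg})\to H^1(\EE_{\cpgp/\fdg})=\Ob_{\cpgp/\fdg}$ is the zero map.

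From here the plan is to propagate this pointwise vanishing to the cone cycle. Fix $\xi\in\cU$. By the obstruction-assignment criterion \cite[Thm 4.5]{BF} recalled in the proof of Proposition \ref{DEF}, the fiber $\bC_{\cpgp/\fdg}|_\xi\subset h^1/h^0(\EE_{\cpgp/\fdg})|_\xi$ is generated (as a closed subcone) by obstruction classes $\phi_{\cpgp/\fdg,\ast}\,\varpi(\mm)$ coming from square-zero extensions $T\subset T'$ relative to $\fdg$ whose composite $T\to\cpgp$ is supported at $\xi$. The vanishing of $\sigma_1\circ\phi_{\cpgp/\fdg,\ast}\circ\varpi$ established in the previous paragraph therefore implies $\sigma_1|_\xi$ annihilates every such obstruction class, and, by closure, all of $\bC_{\cpgp/\fdg}|_\xi$. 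This gives $\bC_{\cpgp/\fdg}|_\cU\subset\ker(\sigma_1|_\cU)$ and, together with the trivial inclusion over $D(\sigma_1)$, the claim.

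The main obstacle I anticipate is technical bookkeeping rather than a genuine difficulty: one must be careful to interpret $[\bC_{\cpgp/\fdg}]$ correctly as a cycle in $h^1/h^0(\EE_{\cpgp/\fdg})$ via the closed embedding induced by $\phi_{\cpgp/\fdg,\ast}$ (as in \cite[Prop 2.7]{BF}, used in the preceding corollary), and to ensure the cone-stack morphism induced by the square \eqref{comm2} is compatible with the identifications used when passing from the level of tangent/obstruction complexes to the level of fibers of cone stacks over closed points. Once these identifications are in place, the argument reduces to the already-proven Lemma \ref{cone}.
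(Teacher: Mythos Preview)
The paper's own proof is a one-line citation to \cite[Thm 5.1]{KL}; it does not argue the containment directly but invokes the general Kiem--Li result, whose hypothesis (that the cosection lifts to the absolute obstruction sheaf) was established immediately above as Proposition~\ref{hahaha}. Your proposal is effectively an attempt to reprove that Kiem--Li theorem in this particular situation, using the vanishing of Lemma~\ref{cone} as the input. That input is correct and is indeed what drives the conclusion.

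The gap is in the step where you assert that ``the fiber $\bC_{\cpgp/\fdg}|_\xi$ is generated (as a closed subcone) by obstruction classes'' and attribute this to \cite[Thm 4.5]{BF}. That theorem is only the criterion for a morphism of complexes to be an obstruction theory; it says nothing about the intrinsic normal cone being swept out by obstruction classes. Establishing that the support of the cone is detected by (curvilinear) obstruction classes is precisely the substantive content of \cite[Thm 5.1]{KL}: one works with a local smooth embedding, analyzes the normal cone via arcs $\Spec \kk[t]/(t^{n})\to \Spec \kk[t]/(t^{n+1})$, and shows each resulting obstruction is annihilated by the lifted cosection. This is not bookkeeping; it is the argument. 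So either cite \cite{KL} as the paper does, or supply that cone-via-arcs analysis explicitly---but do not lean on \cite[Thm 4.5]{BF} for a claim it does not make.
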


\begin{proof}
This is Proposition \cite[Thm 5.1]{KL}.
\end{proof}

In \cite{KL}, Kiem and the second named author constructed a localized Gysin map
$$0^!_{\sigma_1,\mathrm{loc}}: A\lsta h^1/h^0(\EE_{\cpgp/\fdg})_{\sigma_1}\lra A_{\ast-n} D(\sigma_1),
$$
where $-n$ is the rank of $\EE_{\cpgp/\fdg}$. 


\begin{defi-prop}\label{defi}
We define the localized virtual cycle of $(\cpgp,\sigma_1)$ be
$$[\cpgp]\virt_{\sigma_1}=[\bcM_g(\Pf,d)^p]\virt_{\sigma_1}:=0^!_{\sigma_1,\mathrm{loc}}([\bC_{\cpgp/\fdg}])\in A_0 
\bcM_g(Q,d).
$$
We define the virtual enumeration $N_g(d)^p_\Pf:=\deg\, [\bcM_g(\Pf,d)^p]\virt_{\sigma_1}$.
 \end{defi-prop}

The number $N_g(d)^p_\Pf$ is the virtual counting of the Guffin-Sharpe-Witten Model 
$(\cpgp,\sigma_1)$. We call it the Gromov-Witten invariants of the 
moduli of stable morphisms to $\Pf$ with $p$-fields, or of the
Landau-Ginzburg space $(K_\Pf,\bw_\Pf)$.

\section{Degeneration of moduli of stable morphisms with $p$-fields}

In the second part, we will use degeneration to prove that $N_g(d)^p_\Pf$ coincides up to a sign with the
Gromov-Witten invariants $N_g(d)_Q$ of the quintic three-fold $Q$.

The degeneration we will use is to degenerate the moduli $\cP$ to the
moduli of stable morphisms to the normal bundle to $Q\sub \Pf$ coupled with $p$-field. 
After constructing a cosection of its obstruction sheaf,  the degeneration admits a 
localized virtual cycle that provides the proof of the
equivalence of two classes of invariants.

\subsection{The degeneration}

We let $V$ be the total space of the deformation of  $\Pf$ to the normal bundle of $Q\sub \Pf$; it
is the blowing up of $\Pf\times\Ao$ along $Q\times 0$, after taking out the proper transform of 
$\Pf\times 0$. Let 
\beq\label{q-proj}
q_{\Ao}: V\lra \Ao\and q_{\Pf}: V\lra\Pf
\eeq
be the two projections. Then the fiber of $V$ over $c\ne 0$ is
the $\Pf$, and the central fiber (over $0\in\Ao$) is the normal bundle $N$ to $Q\sub \Pf$.
We define the degree of a morphism $u: C\to V$ be
$\deg u=\deg(\rho\circ u)\sta\sO(1)$. 

We form the moduli of genus $g$ and degree $d$ stable morphisms 
$\barM_g(V,d)$. For the moment, we denote by 
$$(\ti f, \ti \pi): \ti \cC\lra V\times \bcM_g(V,d)
$$ 
the universal family of $\bcM_g(V,d)$.
Since $q_{\Ao}$ is proper away from the central fiber $N=V\times_{\Ao} 0$, and since $\Ao$ is affine,
the composite
$q_{\Ao}\circ \ti f: \ti \cC\to \Ao$ factors through a $\bcM_g(V,d)\to\Ao$. 
Its fiber over $c\ne 0\in\Ao$ are $\barM_g(\Pf,d)$; its central fiber is $\barM_g(\nq,d)$.

We now couple the stable morphisms with $p$-field. Let $\ti \sL=\ti f\sta\sO(1)$ and
$\ti \sP=\ti \sL^{-\otimes 5}\otimes \omega_{\ti \cC/\bcM_g(V,d)}$ be the tautological and auxiliary invertible sheaves. Like before, we define the moduli of stable morphisms coupled
with $p$-fields be
$$\cvgp\defeq \bcM_g(V, d)^p\defeq C(\ti \pi_{\ast}\ti \sP_{}),
$$
the direct image cone. It is over $\Ao$, and its fibers over $c\ne 0\in \Ao$ and $0\in\Ao$ are
$$\cvgp\times_{\Ao} c\cong \cpg,\quad \cvgp\times_{\Ao} 0:=\bcM_g(N,d)^p.
$$
Here $\bcM_g(N,d)^p$ is the moduli of stable morphisms to $N$ coupled with $p$-fields.

Following our convention, we denote by
\beq\label{fcv}(f_\cV,\pi_\cV): \cC_\cV\lra V\times\cV
\eeq
the universal map of $\cV$.

\subsection{The cone over $V$}

We construct the tautological cone $C(V)$ over $V$ that will be used to construct the evaluation
morphism $\ee_\fv$ of $\cC_{\cvgp}$. The evaluation map will be used to construct the
obstruction theory of $\cV$.

We let $B=\Vb(\sO(5))$ be the underlying line bundle of $\sO(5)$ over $\Pf$;  let  
$$ \bq_\Pf: B\times\Ao\to B\to \Pf\and \bq_{\Ao}:B\times \Ao\lra \Ao
$$
be the (composite of) projction(s). We let $t\in\Gamma(\sO_{\Ao})$ be the standard coordinate function of 
$\Ao$. We introduce tautological sections over $B\times\Ao$:
\beq\label{til-coor}
\ti x_i=\bq_\Pf\sta x_i\in \Gamma(\bq_{\Pf}^\ast \sO(1) ),\quad
\ti t=\bq_{\Ao}\sta t\in \Gamma(\sO_{B\times\Ao}),\and  \ti y\in \Gamma(\bq_{\Pf}^\ast \sO(5) ),
\eeq
where $\ti y$ is the section so that the morphism $B\times\Ao\to \Vb(\sO(5))$ induced by $\ti y$ is
the projection $B\times\Ao\to B=\Vb(\sO(5))$. (I.e. $\ti y$ is the pull back of the identity map $B\to B$.)

\begin{lemm}\label{tils}
We have a closed immersion
$$
V\cong (\ti s=0\}\sub B\times\Ao,\quad \ti s=\ti x_1^5+\ldots+\ti x_5^5-\ti t\cdot \ti y.
$$
\end{lemm}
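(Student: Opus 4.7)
My plan is to check the claimed closed immersion locally on the standard affine cover of $\Pf$ and then verify that the local isomorphisms glue. Fix the cover $U_i = \{x_i \neq 0\} \subset \Pf$, trivialize $\sO(5)|_{U_i}$ by $x_i^5$, and write $f_i = \sum_j (x_j/x_i)^5$, so that $Q \cap U_i = \{f_i = 0\}$ and $f_i \cdot x_i^5 = \sum_j x_j^5$ as sections of $\sO(5)$.

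The right-hand side is direct: if $y_i$ is the fiber coordinate of $B|_{U_i}$ dual to the trivialization $x_i^5$ (so $\tilde y = y_i \cdot x_i^5$), then dividing $\tilde s = 0$ by $x_i^5$ presents $\{\tilde s = 0\}|_{U_i \times \Ao}$ as the affine hypersurface $\{f_i(p) - t\, y_i = 0\}$ inside $U_i \times \Ao_t \times \Ao_{y_i}$. For the left-hand side, $Q \times 0$ is cut out by the ideal $(t, f_i)$ in $U_i \times \Ao$, so the blow-up sits inside $U_i \times \Ao \times \PP^1_{[a:b]}$ as $\{a f_i = b t\}$; the proper transform of $\Pf \times 0$ is the closure of $\{t = 0, f_i \neq 0\}$, which lies entirely in the locus $a = 0$. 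Removing it leaves only the affine chart $a \neq 0$, namely $\{(p, t, v) \in U_i \times \Ao^2 : f_i(p) = v t\}$ with $v := b/a$. Comparing the two equations, the assignment $v \mapsto y_i$ is a local isomorphism $V|_{U_i \times \Ao} \cong \{\tilde s = 0\}|_{U_i \times \Ao}$.

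It remains to glue: on $U_i \cap U_j$, the identity $f_i x_i^5 = f_j x_j^5$ forces $f_j = (x_i/x_j)^5 f_i$, and hence from $v_i t = f_i$ and $v_j t = f_j$ we read off $v_j = (x_i/x_j)^5 v_i$; the same cocycle governs $y_j = (x_i/x_j)^5 y_i$ via $\tilde y = y_i x_i^5 = y_j x_j^5$. The transition rules agree, so the local identifications $v \leftrightarrow y_i$ assemble into a global closed immersion $V \hookrightarrow B \times \Ao$ cutting out $\{\tilde s = 0\}$. The only real obstacle is keeping the blow-up bookkeeping straight, in particular identifying which affine chart of $\text{Bl}_{(t, f_i)}(U_i \times \Ao)$ survives after removing the proper transform of $\Pf \times 0$; once the sign convention in the blow-up equation $a f_i = b t$ is fixed, the remainder is a direct matching of two copies of the $\sO(5)$ cocycle.
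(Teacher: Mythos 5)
Your proof is correct, and it takes a more explicit route than the paper's. The paper defines the map only away from the central fiber, by setting $\Phi\sta \ti y=t\upmo(x_1^5+\cdots+x_5^5)$ on $V-V\times_{\Ao}0$, observes that this is an open immersion into $\{\ti s=0\}$, and then appeals to the structure of the deformation to the normal cone to extend it to an isomorphism over $t=0$; the extension step is asserted rather than computed. You instead unwind the blow-up in the standard charts: on $U_i\times\Ao$ the center $Q\times 0$ is cut out by the regular sequence $(t,f_i)$, the blow-up is the hypersurface $\{af_i=bt\}$, the proper transform of $\Pf\times 0$ is exactly $\{a=0\}$ in it, and the surviving chart $\{f_i=vt\}$ matches $\{f_i=ty_i\}$ term by term, with the two local coordinates $v$ and $y_i$ transforming by the same cocycle $(x_i/x_j)^5$. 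What your version buys is a self-contained verification that the isomorphism really does extend across the exceptional divisor (the fiber over $t=0$ becomes $\{f_i=0\}\times\Ao_v$, i.e.\ the normal bundle $N\cong\sO_Q(5)$, as it must); what the paper's version buys is brevity. One small point worth tightening: your derivation of $v_j=(x_i/x_j)^5v_i$ from $v_it=f_i$ and $v_jt=f_j$ is a priori only valid where $t\ne 0$; either argue directly from the identification of the projective coordinates $[a_j:b_j]=[t:\varphi f_i]=[a_i:\varphi b_i]$ with $\varphi=(x_i/x_j)^5$, or note that agreement on the dense open $\{t\ne 0\}$ of the integral scheme $V$ suffices to conclude the gluing maps coincide.
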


\begin{proof}
We define 
$$\Phi: V-V\times_{\Ao}0\lra B\times\Ao
$$
via $\Phi\sta(\ti x_i)=q_\Pf\sta(x_i)$, $ \Phi\sta(\ti t)=q_{\Ao}\sta t$, and $\Phi\sta \ti y=t\upmo \cdot (x_1^5+\ldots+x_5^5)$,
where $q_\Pf: V\to \Pf$ is the projection, etc. (cf. \eqref{q-proj}). By definition, the image of $\Phi$ lies in
$\ti s=0$ and is an open immersion into $\ti s=0$. Using that $V$ is the deformation of
$\Pf$ to the normal cone of $Q\sub \Pf$, $\Phi$ extends to an isomorphism between $V$ and $\ti s=0$. 
This proves the Lemma.
\end{proof}

In the following, we will view $V\sub B\times \Ao$ using this isomorphism.
We next construct the cone $C(V)$ desired. We let $W_5=\CC\lAo$ (resp. $W_1=\CC\lAo^{\oplus 5}$)
be the trivial line bundle (resp. rank five trivial vector bundle) over $\Ao$.
We consider the rank six bundle
$$\pr_{\Ao}: W_1\times\lAo W_5 \mapright{}\Ao
$$
with the $\CC\sta$-action: $\CC\sta$ acts on the base $\Ao$ trivially and acts on fibers of $W_1$ (resp. $W_5$)
of weight one (resp. weight five). Namely, for $z\in W_1$ and $y\in W_5$,
$z^\sigma=\sigma z$ and $y^\sigma=\sigma^5y$.

We let $W_1\sta=W_1-0_{W_1}$, where $0_{W_1}$ is the zero section of $W_1$. 
We introduce 
$$C(V)=(\eps=0)\sub  W_1\sta\times_{\Ao} W_5, \quad \eps=z_1^5+\ldots+ z_5^5-t\cdot y.
$$
It is smooth and is $\CC\sta$-invariant.

We claim that $(W_1\sta\times\lAo W_5)/\CC\sta$ is isomorphic to $B=\Vb(\sO(5)$,
and under this isomorphism we have commuting (horizontal) quotient morphisms
\beq\label{quots}
\begin{CD}
W_1\sta\times\lAo W_5 @>{\Psi}>>B\\
@AA{\cup}A @AA{\cup}A\\
C(V) @>{/\CC\sta}>> V
\end{CD}
\eeq
Indeed, the top horizontal qoutient morphism follows from that of the weights of the $\CC\sta$-action on $W_1\times\lAo W_5$.
To see the full diagram, we construct explicitly the morphism $\Psi$ in \eqref{quots}.
We let $U_i\sub \Pf$ be the open subset $x_i\ne 0$; we fix trivialization $\sO(5)|_{U_i}\cong \sO_{U_i}$
so that the transition function $\varphi_{ij}=x_i^5/x_j^5: \sO_{U_i}|_{U_i\cap U_j}\to \sO_{U_j}|_{U_i\cap U_j}$.
We define
$$\Psi_i: (W_1\sta-\{ z_i=0\})\times\lAo W_5\lra (B\times_\Pf U_i)\times \Ao
$$
via $(((z_i),y),t)\mapsto ((\frac{y}{z_i^5}, [z_1,\ldots,z_5]), t)$. This collection $\{\Psi_i\}$ form 
the morphism $\Psi$ in \eqref{quots}. 

By construction, $\Psi$ is $\CC\sta$-equivariant with $\CC\sta$ acting 
trivially on $B$, and factors to a $\CC\sta$-quotient morphism
$$\rho: C(V)\lra V.
$$

\vsp
For later purpose, we describe the tangent bundles $T_{C(V)/\Ao}$ and $T_{C(V)}$.
Using the defining equation of $C(V)$, they fit into the exact sequences
\beq\label{Omega}
0\lra T_{C(V)/\Ao}\lra  \sO_{C(V)}^{\oplus 5}\oplus \sO_{C(V)} \mapright{d'\eps} \sO_{C(V)}\mapright{} 0, \quad (d't=0),
\eeq
where $d'$ is the relative differential and $d'\eps|_{((z_i),y,t)}$ sends $((\mathring z_i), \mathring y)$ to $\sum 5z_i^4 \mathring z_i-t\mathring y$;
\beq\label{Omega-2}
0\lra T_{C(V)}\lra  \sO_{C(V)}^{\oplus 5}\oplus \sO_{C(V)} 
\oplus \sO_{C(V)} \mapright{d\eps} \sO_{C(V)}\mapright{} 0,
\eeq
where $d\eps|_{((z_i),y,t)}$ sends $((\mathring z_i), \mathring y,\mathring t)$ to 
$\sum 5z_i^4 \mathring z_i-t\mathring y-y\mathring t$. 

Together they fit into the exact sequence
\beq\label{OO}
0\lra T_{C(V)/\Ao}\lra T_{C(V)}\lra \sO_{C(V)}\lra 0.
\eeq

\subsection{The evaluation maps}
We now construct the evaluation morphism of $\cC_{\cvgp}$. 
Since $V$ is a family over $\Ao$, it is natural to construct the obstruction theory
of $\cV$ relative to $\fdg\times\Ao$.

To this purpose, we introduce $\tfdg=\fdg\times \Ao$, viewed as a stack over $\Ao$; denote by
$$\cC_\tfdg\defeq \cC_\fdg\times\Ao\lra {\fdg}\times\Ao=\tfdg\
$$ the universal 
curve, and denote by $\sL_\tfdg$ the pull back of $\sL_\fdg$ via $\cC_\tfdg\to\cC_\fdg$.

We form $\Vb(\sL_\tfdg\upf)\sta=\Vb(\sL_\tfdg\upf)-0_{\tfdg}$,
and consider the bundle over $\cC_{\fdg}$:
\beq\label{bundle}\Vb(\sL_\tfdg\upf)\sta\times_{\cC_\fdg} \Vb(\sL\of_\tfdg)\lra \cC_\tfdg.
\eeq
Note that for each $\xi\in \cC_\fdg$, the fibers of \eqref{bundle} over $\xi\times\Ao\sub \tfdg$ is
isomorphic to 
$$(L\upf -0)\times L\of \times \Ao\cong W_1\sta\times\lAo W_5,\quad L\defeq \sL_\fdg\upf\otimes_{\cC_\fdg}\kk(\xi),
$$
where the isomorphism is uniquely determined by an isomorphism $L\cong \CC$, and
two different isomorphisms are equivalent under a scaling of $(\CC^5-0)\times\CC$
by a $c\in \CC\sta$ with weights $(1,\ldots, 1,5)$ on the factors of $(\CC^5-0)\times\CC$.

We let $\CC\sta$ acts on the bundle \eqref{bundle} fiberwise with this weights.
We obtain the quotient $\Ao$-morphisms (the $\Ao$ is the base of $W\to \Ao$ and of $\tfdg=\fdg\times\Ao\to \Ao$)

$$
\Vb(\sL_\tfdg\upf)\sta\times_{\cC_\tfdg} \Vb(\sL\of_\tfdg)\lra
\bl \Vb(\sL_\tfdg\upf)\sta\times_{\cC_\tfdg} \Vb(\sL\of_\tfdg)\br/\CC\sta\lra (W_1\sta\times\lAo W_5)/\CC\sta.
$$
We define
\beq\label{Zpri}
\cZ'= \bl \Vb(\sL_\tfdg\upf)\sta\times_{\cC_\tfdg} \Vb(\sL\of_\tfdg)\br\times_{(W_1\sta\times\lAo W_5)/\CC\sta} V;
\eeq
it is the preimage of $V\sub (W_1\sta\times\lAo W_5)/\CC\sta$ of the morphism above \eqref{Zpri}.
We define
\beq\label{ZZ}
\cZ=\cZ'\times_{\cC_{\tfdg}} \Vb(\sP_{\tfdg}).
\eeq

\vsp
We now construct the evaluation morphism
\beq\label{ev-V}
\ee_\fv: \cC_{\cvgp}\lra \cZ.
\eeq
We let $\sL_\cV=f_\cV\sta\sO(1)$, where $(f_\cV,\cC_\cV)$ is the universal family of $\cV$ (cf. \eqref{fcv}), let
$\sP_{\cvgp}=\sL_{\cvgp}^{-\otimes 5}\otimes\omega_{\cC_{\cvgp}/\cvgp}$ be the auxiliary invertible sheaf,
and let 
\beq\label{section}
\fp\in \Gamma(\cC_{\cvgp},\sP_{\cvgp}), \ 
\fu_i=f_{\cvgp}\sta \ti x_i\in \Gamma(\cC_{\cvgp}, \sL_{\cvgp})
\and \fy=f_\cvgp\sta \ti y \in\Gamma(\cC_{\cvgp},\sL_\cvgp\of)
\eeq
(cf. \eqref{til-coor}) be the universal $\fp$-field and the tautological coordinate functions.
Note that $(\cC_{\cvgp}, \sL_{\cvgp})$ induces an $\Ao$-morphism $\cvgp\to\tfdg$ so that $(\cC_\cV,\sL_\cV)$ is isomorphic
to the pull back of $(\cC_\tfdg,\sL_\tfdg)$.

Then the definition of $V\sub B\times\Ao$ implies that the sections in \eqref{section} satisfy
$$\fu_1^5+\fu_2^5+\fu_3^5+\fu_4^5 +\fu_5^5-t\cdot \fy=0,
$$
where $t$ is the coordinate function of $\Ao$ mentioned before.
Therefore the section $((\fu_i)_{i=1}^5,\fy, \fp)$ defines a section of 
$$\cZ\times_{\cC_\tfdg}\cC_{\cvgp}\lra \cC_{\cvgp}.
$$
This section induces a $\cC_{\cvgp}$-morphism 
$\cC_{\cvgp}\to \cZ\times_{\cC_\tfdg}\cC_\cvgp$. Composed with the projection $ \cZ\times_\tfdg\cvgp\to\cZ$, we obtain the evaluation morphism over $\cC_\tfdg$ in \eqref{ev-V}.

\subsection{The obstruction theory of $\cV/\tfdg$}

We will build the obstruction theories to carry out the degeneration for virtual cycles.
We first construct the relative obstruction theory of $\cV\to \tfdg$. The restriction of this obstruction theory
to fibers over $c\in\Ao$ will give the relative obstruction theories of $\cV_c=\cV\times\lAo c\to \fdg$.

We begin with a description of the tangent bundle $T_{\cZ'/\tfdg}$. Let $\varrho: \cZ'\to \tfdg$ be the
tautological projection.
Using the explicit description of $T_{C(V)/\Ao}$ given in \eqref{Omega}, 
and the construction of  $\cZ'$ in \eqref{Zpri},
we see that $\Omega\dual_{\cZ'/\tfdg}$ fits into the exact sequence
$$0\lra \Omega\dual_{\cZ'/\tfdg} \lra \varrho\sta \sL_{\tfdg}\upf\oplus \varrho\sta \sL_{\tfdg}\of\mapright{d\cE} \varrho\sta\sL_{\tfdg}\of\lra 0,
$$
where $d\cE$ restricted to $((z_i),y,t)\in \cZ'$ sends $((\mathring z_i), \mathring y)$ to $\sum 5z_i^4 \mathring z_i-t\mathring y$.
(cf. \eqref{Omega}.)
Using that $\sL_{\cvgp}=f_{\cvgp}\sta\sO(1)$, we obtain
\beq\label{HH0}
\ee_\fv\sta \Omega\dual_{\cZ'/\cC_\tfdg}\cong f_{\cvgp}\sta \sH\and
\ee_\fv\sta \Omega\dual_{\cZ/\cC_\tfdg}\cong f_{\cvgp}\sta \sH\oplus \sP_\cV,
\eeq
where $\sH$ on $B\times\Ao$ is defined by the exact sequence
$$0\lra \sH\lra q_{\Pf}\sta\sO(1)\upf\oplus q_\Pf\sta\sO(5)\mapright{d'\ti s} q_\Pf\sta\sO(5)\lra 0,
$$
where $d'\ti s$ is the differential of $\ti s$ in \eqref{tils}, after setting $d't=0$.
(Recall that $V\sub B\times \Ao$ by Lemma \ref{tils}.)

We have a similar description 
\beq\label{KK}
\ee_\fv\sta \Omega\dual_{\cZ'/\cC_\fdg}\cong f_{\cvgp}\sta \sK\and
\ee_\fv\sta \Omega\dual_{\cZ/\cC_\fdg}\cong f_{\cvgp}\sta \sK\oplus \sP_\cV,
\eeq
where $\sK$ is defined by the exact sequence
\beq\label{KK2}0\mapright{} \sK\mapright{i} q_{\Pf}\sta\sO(1)\upf\oplus q_\Pf\sta\sO(5)\oplus q_\Pf\sta\sO \mapright{d\ti s} q_\Pf\sta\sO(5)\lra 0,
\eeq
where $d'\ti s$ is the differential of $\ti s$ in Lemma \eqref{tils}.

\begin{prop}\label{ob-WV}
The pair $\cvgp\to\tfdg$ admits a perfect relative obstruction theory 
$$\phi_{\cvgp/\tfdg}: \TT_{\cvgp/\tfdg}\lra \EE_{\cvgp/\tfdg}:=
R^\ast\pi_{\cvgp\ast}(f_{\cvgp}^\ast \sH\oplus \sP_\cV). 
$$
Its specialization at $c\ne 0\in \Ao$ (resp. $0\in \Ao$) give the perfect relative obstruction theory of
$\phi_{\cpgp/\fdg}$ (resp. $\phi_{\bcM_g(N,d)^p/\fdg}$).
\end{prop}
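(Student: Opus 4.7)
The plan is to realize $\cvgp$ as a moduli of sections of $\cZ \to \cC_\tfdg$ and apply Proposition \ref{DEF}, just as was done for $\cpgp \to \fdg$ in Proposition \ref{obW}. First I would identify $\cvgp$ with the section groupoid $\fS$ built from $\cZ$: by Lemma \ref{tils}, a degree-$d$ morphism $\cC_S \to V$ is the same as sections $((\fu_i),\fy)$ of $\sL_S\upf \oplus \sL_S\of$ satisfying $\sum \fu_i^5 = t\cdot \fy$ with $(\fu_i)$ nowhere vanishing, and adjoining a $p$-field $\fp$ produces exactly a $\cC_S$-section of $\cZ \times_{\cC_\tfdg} \cC_S$; the evaluation map $\ee_\fv$ of \eqref{ev-V} is this tautological section.

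The key hypothesis to verify is that $\cZ \to \cC_\tfdg$ is smooth. The ambient bundle $\Vb(\sL_\tfdg\upf)\sta \times_{\cC_\tfdg}\Vb(\sL_\tfdg\of)\times_{\cC_\tfdg}\Vb(\sP_\tfdg)$ is smooth over $\cC_\tfdg$, and $\cZ$ is cut out by the single equation $\eps = z_1^5+\cdots+z_5^5 - t\cdot y$. Treating $t$ as a coordinate on the base $\Ao \subset \tfdg$, the relative partial derivatives $(5z_1^4,\ldots,5z_5^4,-t)$ never simultaneously vanish because the defining condition $(z_i) \in W_1\sta$ forces some $z_i \ne 0$. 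Hence $\eps$ cuts out a smooth relative divisor, so $\cZ \to \cC_\tfdg$ is smooth and Proposition \ref{DEF} produces the perfect relative obstruction theory
$$\phi_{\cvgp/\tfdg}: \TT_{\cvgp/\tfdg} \lra R^\bullet\pi_{\cvgp\ast}\ee_\fv^\ast\Omega\dual_{\cZ/\cC_\tfdg}.$$
The identification \eqref{HH0} rewrites the target as $R^\bullet\pi_{\cvgp\ast}(f_{\cvgp}^\ast\sH \oplus \sP_\cV)$, giving the stated $\EE_{\cvgp/\tfdg}$.

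For the specialization assertion, I would base change along $\{c\} \hookrightarrow \Ao$. For $c \ne 0$, the map $d'\ti s$ in the defining sequence for $\sH$ is surjective in the $\mathring y$ direction via the $-c$ coefficient, so one can solve for $\mathring y$ and conclude $\sH|_{V_c} \cong q_\Pf^\ast\sO(1)\upf$; the complex then reduces to $R^\bullet\pi_{\cpgp\ast}(\sL_{\cpgp}\upf \oplus \sP_{\cpgp})$, which is precisely $\phi_{\cpgp/\fdg}$ of Proposition \ref{obW}. For $c = 0$, the $-t$ term drops out and the analogous exact sequence on $V_0 = N$ identifies $\sH|_{V_0}$ with the relative tangent bundle of the $\CC\sta$-cover of $N$, producing $\phi_{\bcM_g(N,d)^p/\fdg}$ by the same Proposition \ref{DEF} mechanism applied to the $c=0$ slice. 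The main technical point I expect to work at is the compatibility of derived pullback along $\{c\}\hookrightarrow\Ao$ with these fiberwise obstruction theories, but this should follow formally from the manifest functoriality of Proposition \ref{DEF} in the base stack together with flatness of $\cvgp \to \Ao$.
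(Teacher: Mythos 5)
Your proposal follows essentially the same route as the paper: realize $\cV$ inside the moduli of sections of $\cZ\to\cC_{\tfdg}$, invoke Proposition \ref{DEF} (your explicit check that the relative differential $(5z_i^4,-t)$ is nowhere zero on $W_1\sta$ is exactly the smoothness the paper uses implicitly via \eqref{Omega}), identify the target with $R^\ast\pi_{\cvgp\ast}(f_{\cvgp}^\ast\sH\oplus\sP_\cV)$ through \eqref{HH0}, and obtain the specializations by base change of the diagram \eqref{first-V} to the fibers over $c\in\Ao$. The only cosmetic difference is that the map $\cV\to\fS$ is an open immersion rather than an identification (and the needed base change is cohomology-and-base-change for the flat family of curves, not flatness of $\cvgp\to\Ao$), but neither affects the argument.
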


\begin{proof}
We fit $\ee_\fv:\cC_{\cvg}\to \cZ$ (cf. \eqref{ev-V}) into the commutative diagrams
\beq\label{first-V}
\begin{CD}
\cvg@<{\pi_\cvgp}<<\cC_{\cvgp} @>{\ee_\fv}>> \cZ \\
@VVV@VVV@VV{\pr}V\\
\tfdg@<{\pi_\tfdg}<<\cC_{\tfdg} @=\cC_{\tfdg},
\end{CD}
\eeq
where the left one is Cartesian. Using
\beq\label{bfV}
\pi_\cvg^\ast \TT_{\cvg/\tfdg}\cong \TT_{\cC_\cvg/\cC_{\tfdg}}\lra 
 \ee_{\cvg}^\ast \TT_{\cZ/\cC_{\tfdg}}= \ee_{\cvg}^\ast T_{\cZ/\cC_{\tfdg}}
 \eeq
and applying the projection formula, we obtain
\begin{equation}\label{def--V}
\phi_{\cvg/\tfdg}: 
\TT_{\cvg/\tfdg}\lra R^\bullet \pi_{\cvg\ast} \ee_{\cvg}^\ast T_{\cZ/\cC_{\tfdg}} .
\end{equation}

Let $\fS$ be the moduli of section of $\cZ\to \tfdg$ constructed in Subsection \ref{sec2.2}.
Because the evaluation morphism $\ee_\fv$ induces an open immersion $\cV\to \fS$,
using Proposition \ref{DEF} implies that $\phi_{\cvg/\tfdg}$ is a perfect relative obstruction theory.

Finally, the fiber product of every stack in (\ref{first-V}) with $c\ne 0\in \Ao$ gives the diagram used to construct $\phi_{\cpgp/\fdg}$.
Using $\iota_c: \cvgp\times_{\Ao}c \to \cvgp$, the functoriality of the construction ensures that
$\phi_{\cpgp/\fdg}$  is the composition of
$\TT_{\cpgp/\fdg}\to \iota_c\sta \TT_{\cvgp/\tfdg}$ with 
$$\iota_c\sta(\phi_{\cvgp/\tfdg}):\iota_c\sta \TT_{\cvgp/\tfdg}\lra \iota_c\sta \EE_{\cvgp/\tfdg}\cong\EE_{\cpgp/\fdg}.
$$ 
In case $c=0$, we define $\EE_{\bcM_g(N,d)^p/\fdg}\defeq \iota_0\sta \EE_{\cvgp/\tfdg}$.  This proves the Proposition.
\end{proof}

\subsection{The obstruction theory of $\cV/\fdg$}
\def\fr{\mathfrak r}
To compare the virtual cycle of $\cV_0$ with $\cV_{c\ne 0}$, we need the relative obstruction theory of $\cV\to\fdg$.

We using the $\phi_{\cV/\tfdg}$ just constructed.
We let 
\beq\label{B}
\sK\lra q_\Pf^\ast\sO_\Pf\cong \sO_{B\times \Ao}
\eeq
be the composition of $i$ in \eqref{KK} with the projection to the last factor.  
We form
$$\mu: R^\ast\pi_{\cvgp\ast}f_{\cvgp}^\ast{\sK}\lra  R^\ast\pi_{\cvgp\ast}f_{\cvgp}^\ast\sO_{V}\lra
R^1\pi_{\cvgp\ast} \sO_{\cC_{\cvgp}}[-1],
$$ 
where the first arrow is $R^\bullet\pi_{\cV\ast}$ of \eqref{B}, and the second arrow if 
the tautological homomorphism from a two-term complex to its $H^1$.
 
We let $C(\mu\dual)$ be the mapping cone of $\mu\dual$, and let $C(\mu\dual)\dual$ be its dual.
It fits into the distinguished triangle 
\beq\label{dist0}
R^1\pi_{\cvgp\ast} \sO_{\cC_{\cvgp}}[-2]\lra C(\mu\dual)\dual\lra 
R^\ast\pi_{\cvgp\ast}f_{\cvgp}^\ast{\sK} \mapright{+1}  R^1\pi_{\cvgp\ast} \sO_{\cC_{\cvgp}}[-1].
\eeq
We define
$$\EE'_{\cvgp/\fdg}:= R^\ast\pi_{\cvgp\ast}(f_{\cvgp}^\ast{\sK}\oplus \sP_{\cvgp})
\and \EE_{\cvgp/\fdg}:=C(\mu\dual)\dual \oplus R^\ast\pi_{\cvgp\ast}\sP_{\cvgp}.
$$
Then one has
\beq\label{dist}
\begin{CD}
R^1\pi_{\cvgp\ast} \sO_{\cC_{\cvgp}}[-2]@>>>\EE_{\cvgp/\fdg}@>{\eta}>> 
\EE'_{\cvgp/\fdg} @>{\mu}>{+1}>  R^1\pi_{\cvgp\ast} \sO_{\cC_{\cvgp}}[-1].
\end{CD}
\eeq
By construction,
$\EE_{\cvgp/\fdg}$ is a derived object representable 
by a two-term complex of locally free sheaves; 
its $H^1$ is
$$ H^1(\EE_{\cvgp/\fdg})=\ker\{H^1(\mu): R^1\pi_{\cvgp\ast}(f_{\cvgp}^\ast{\sK} \oplus\sP_\cvgp )
\lra  R^1\pi_{\cvgp\ast} \sO_{\cC_{\cvgp}}\}.$$  Since (\ref{B}) is surjective, $H^1(\mu)$ is also surjective.
   
We now derive the perfect relative obstruction theory of $\cvgp\to\fdg$.
Substituting $\ti\fdg$ and $\ti\cC_{\fdg}$ in Proposition \ref{ob-WV} by $\fdg$ and $\cC_{\fdg}$ respectively,
and following the recipe in the proof of Proposition \ref{ob-WV}, we obtain a morphism
\beq\label{phi'}
 \phi'_{\cvgp/\fdg}:\TT_{\cvgp/\fdg}\lra R^\ast\pi_{\cvgp\ast}  \ee_\fv^\ast\TT_{\cZ/\cC_\fdg} 
 \cong R^\ast\pi_{\cvgp\ast} (f_{\cvgp}^\ast{\sK}  \oplus \sP_{\cvgp}) 
 \defeq  \EE'_{\cvgp/\fdg}.
\eeq
Since moduli of sections of $\cZ\to\tfdg$
is isomorphic to the moduli of sections of $\cZ\to\fdg$, where $\cZ\to\fdg$ is via the composite $\cZ\to\tfdg\to\fdg$,
both are $\cvgp$, thus Proposition \ref{DEF} implies that
$\phi_{\cV/\fdg}'$ is a perfect relative obstruction theory.
 
According to Proposition \ref{ob-WV}, the obstruction sheaf of $\phi_{\cV/\fdg}'$ has an extra
factor $R^1\pi_{\cV\ast} \sO_{\cC_\cV}$ compared with that of $\phi_{\cP/\fdg}$ and of $\phi_{\bcM_g(N,d)^p/\fdg}$. 
Our solution is to lift it to a new obstruction theory (cf. \eqref{dist})
\beq\label{phi-new}
\phi_{\cvgp/\fdg}: \TT_{\cvgp/\fdg}\lra \EE_{\cvgp/\fdg}
\eeq
whose obstruction sheaf is parallel to that of $\phi_{\cP/\fdg}$ and of $\phi_{\bcM_g(N,d)^p/\fdg}$. 

We denote by $\ti\fr:\cC_\cvgp\to \cC_\tfdg$ the tautological morphism covering the tautological projection
$\fr$ shown in the Cartesian square
$$\begin{CD}
\cC_\cvgp@>{\ti\fr}>>\cC_{\tfdg}\\
@VV{\pi_\cvgp}V@VVV\\
\cV @>{\fr}>> \tfdg.
\end{CD}
$$
Applying $\TT_{\cdot/\cC_{\fdg}}$ to the evaluation $\cC_{\fdg}$-morphism $\ee_\fv: \cC_{\cvgp}\to \cZ$ (in  (\ref{first-V})),
the identity
$$\ti\fr=\pr\circ\ee_\fv: \cC_\cV\mapright{\ee_\fv} \cZ\mapright{\pr} \cC_{\tfdg}
$$
provides us a commutative square
$$
\begin{CD}
\ee_\fv^\ast \Omega\dual_{\cZ/\cC_{\fdg}}@>>> (\pr\circ \ee_\fv)^\ast \Omega\dual_{\cC_{\tfdg}/\cC_{\fdg}}\cong \sO_{\cC_{\cvgp}}\\
@AAA@AAA\\
 \TT_{\cC_{\cvgp}/\cC_{\fdg}}\cong \pi_\cvgp^\ast \TT_{\cvgp/\fdg}@>>> \ti\fr^\ast \Omega\dual_{\cC_\tfdg/\cC_\fdg}\cong \pi_\cvgp^\ast \fr^\ast \Omega\dual_{\tfdg/\fdg};
\end{CD}
$$
applying projection formula to both vertical arrows, we further obtain the commutative diagrams
\beq\label{HH}
\begin{CD}
\EE'_{\cvgp/\fdg}@>>> R^\ast\pi_{\cvgp\ast} \sO_{\cC_{\cvgp}}@>>>R^1\pi_{\cvgp\ast}\sO_{\cC_\cvgp}[-1]\\
@AA{\phi_{\cvgp/\fdg}'}A@AAA@AAA\\
\TT_{\cvgp/\fdg}@>>> \fr^\ast \Omega\dual_{\tfdg/\fdg}=\sO_{\cvgp}@>>>0.\\
\end{CD}
\eeq
This shows that $\mu\circ \phi_{\cV/\fdg}'=0$ (cf. $\mu$ is in \eqref{dist}).
Applying $\Hom(\TT_{\cvgp/\fdg},\cdot)$ to (\ref{dist}), we conclude that
the morphism $\phi_{\cvgp/\fdg}'$ (in \eqref{dist}) lifts (non-uniquely)
as stated in \eqref{phi-new}
such that
\beq\label{equation}
  \eta\circ \phi_{\cvgp/\fdg}=\phi'_{\cvgp/\fdg}.
\eeq

\begin{prop}\label{familyob}
The homomorphism $\phi_{\cvgp/\fdg}$ is a perfect relative obstruction theory of $\cvgp\to\fdg$.
\end{prop}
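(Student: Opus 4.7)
The approach is to derive the perfect relative obstruction theory property of $\phi_{\cvgp/\fdg}$ from the already-established property of $\phi'_{\cvgp/\fdg}$, using the compatibility $\eta\circ\phi_{\cvgp/\fdg}=\phi'_{\cvgp/\fdg}$ recorded in (\ref{equation}) together with the distinguished triangle (\ref{dist}). The key observation is that passing from $\EE'_{\cvgp/\fdg}$ to $\EE_{\cvgp/\fdg}$ only trims $H^1$ down to a subsheaf while leaving $H^0$ untouched, so the obstruction theory is simply sharpened without losing any deformation-theoretic information captured by $\phi'_{\cvgp/\fdg}$.

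The first step is to extract the cohomology of (\ref{dist}). The long exact sequence gives an isomorphism $H^0(\eta):H^0(\EE_{\cvgp/\fdg})\xrightarrow{\cong} H^0(\EE'_{\cvgp/\fdg})$ and an inclusion $H^1(\eta):H^1(\EE_{\cvgp/\fdg})\hookrightarrow H^1(\EE'_{\cvgp/\fdg})$ with image $\ker H^1(\mu)$. The surjectivity of $H^1(\mu)$ noted immediately after (\ref{dist}) forces $H^2(\EE_{\cvgp/\fdg})=0$, confirming the asserted amplitude $[0,1]$ and its representability by a two-term complex of locally free sheaves.

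Next, $\phi'_{\cvgp/\fdg}$ is a perfect relative obstruction theory by Proposition \ref{DEF} applied to the moduli of sections of $\cZ\to\cC_\fdg$. Translating the Behrend--Fantechi criterion to the paper's tangent-complex convention (via dualization of the two-term locally free presentation) yields that $H^0(\phi'_{\cvgp/\fdg})$ is an isomorphism and $H^1(\phi'_{\cvgp/\fdg})$ is injective. Applying $H^\bullet$ to $\eta\circ\phi_{\cvgp/\fdg}=\phi'_{\cvgp/\fdg}$, one obtains $H^0(\phi_{\cvgp/\fdg})=H^0(\eta)^{-1}\circ H^0(\phi'_{\cvgp/\fdg})$, an isomorphism; and $H^1(\eta)\circ H^1(\phi_{\cvgp/\fdg})=H^1(\phi'_{\cvgp/\fdg})$, whence the injectivity of both $H^1(\eta)$ and $H^1(\phi'_{\cvgp/\fdg})$ forces injectivity of $H^1(\phi_{\cvgp/\fdg})$. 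Together with the amplitude verification, this is precisely the Behrend--Fantechi criterion for $\phi_{\cvgp/\fdg}$ to be a perfect relative obstruction theory.

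The principal subtlety is that the lift $\phi_{\cvgp/\fdg}$ in (\ref{phi-new}) is non-unique; two lifts can differ by a morphism factoring through $R^1\pi_{\cvgp\ast}\sO_{\cC_\cvgp}[-2]$. This ambiguity is harmless because the argument above relies only on the defining factorization (\ref{equation}), which every valid lift satisfies by construction. The only substantive input is the surjectivity of $H^1(\mu)$, which follows from the surjectivity of (\ref{B}) together with the vanishing of $R^2\pi_{\cvgp\ast}$ on families of nodal curves applied to the short exact sequence $0\to \ker\to f_{\cvgp}^\ast\sK\to f_{\cvgp}^\ast\sO_V\to 0$.
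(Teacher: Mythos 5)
Your reduction is the same one the paper uses: everything rests on the fact that in the distinguished triangle (\ref{dist}) the third term $R^1\pi_{\cvgp\ast}\sO_{\cC_{\cvgp}}[-1]$ is concentrated in degree one, so that $H^0(\eta)$ is an isomorphism and $H^1(\eta)$ is injective, and the non-uniqueness of the lift $\phi_{\cvgp/\fdg}$ is harmless because only the factorization (\ref{equation}) enters. Your computation of the amplitude of $\EE_{\cvgp/\fdg}$ from the surjectivity of $H^1(\mu)$ is also correct and matches the remark after (\ref{dist}).

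The gap is in the bridge you use to reach the conclusion. In this paper a perfect relative obstruction theory $\phi\colon\TT_{X/Y}\to\EE_{X/Y}$ is certified by the Behrend--Fantechi lifting criterion exactly as spelled out in the proof of Proposition \ref{deformation}: the class obtained by pushing $\varpi(\mm)$ forward via $\phi$ vanishes if and only if a lift exists, and the lifts form a torsor under $H^0(T,\mm^\ast\EE\otimes J)$. You instead invoke the cohomological characterization ``$H^0(\phi)$ is an isomorphism and $H^1(\phi)$ is injective'' and assert it is obtained by ``dualization of the two-term locally free presentation.'' But only $\EE$ admits such a presentation; $\TT_{X/Y}=\LL_{X/Y}\dual$ is not perfect, $\LL_{X/Y}^{\vee\vee}\neq\LL_{X/Y}$, and the cone of $\phi$ is not dualizable, so while the cohomological condition is a necessary consequence of the lifting criterion, the sufficiency direction --- which is precisely what you need when you apply it to $\phi_{\cvgp/\fdg}$ --- is not formal, and you do not prove it. The paper closes exactly this hole by never leaving the lifting criterion: since $\phi_{\cvgp/\fdg}$ lifts $\phi'_{\cvgp/\fdg}$ through $\eta$, the obstruction class $\varpi(\mm)'\in H^1(T,\mm^\ast\EE'_{\cvgp/\fdg}\otimes J)$ assigned by $\phi'_{\cvgp/\fdg}$ is the image under $H^1(\eta)$ of the class $\varpi(\mm)$ assigned by $\phi_{\cvgp/\fdg}$; injectivity of $H^1(\eta)$ gives $\varpi(\mm)=0$ if and only if $\varpi(\mm)'=0$, and the latter is equivalent to the existence of a lift because $\phi'_{\cvgp/\fdg}$ is already known to be a perfect obstruction theory, with the torsor statement handled the same way using that $H^0(\eta)$ is an isomorphism. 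Recasting your argument at the level of obstruction classes rather than of the cohomology sheaves of $\phi$ makes it complete.
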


\begin{proof}
We only need to check the criterion of perfect obstruction theory stated in the proof of Proposition \ref{deformation}.
Namely, we need to show that 
to any square zero extension $T\sub T'$ of affine schemes by $J$, and a commutative square
$$
\begin{CD}
T@>{\mm}>>\cvgp\\
@VVV @VVV\\
T'@>{\nn}>>\fdg,
\end{CD}
$$
the arrow $\phi_{\cV/\fdg}$ assigns an element $\varpi(\mm)\in H^1(T,\mm^\ast\EE_{\cvgp/\fdg}\otimes J)$
(cf. \eqref{obclass}) such that  there is a lifting $\mm':T'\to \cV$ of the square above
if and only if $\varpi(\mm)=0$.

Recall that $\phi_{\cV/\fdg}'$ is also a perfect relative obstruction theory. We let $\varpi(\mm)'\in 
H^1(T,\mm^\ast\EE'_{\cvgp/\fdg}\otimes J)$ be the associated obstruction class. Since $\phi_{\cV/\fdg}$ is a lift
of $\phi_{\cV/\fdg}'$, $\varpi(\mm)'$ is the image of $\varpi(\mm)$ under the homomorphism
$$H^1(\eta):
H^1(T,\mm^\ast\EE_{\cvgp/\fdg}\otimes J)\mapright{} H^1(T,\mm^\ast(R^\ast\pi_{\cvgp\ast}\EE'_{\cvgp/\fdg}\otimes J)
$$
induced by the $\eta$ in \eqref{dist}. Because of the distinguished triangle \eqref{dist},
$H^1(\eta)$ is injective. This proves that $\varpi(\mm)=0$ if and only if $\varpi(\mm)'=0$. Since the later is the obstruction
class, the former is too.

The other part of the criterion follows from the same reason. This proves the Proposition.
\end{proof}

\subsection{Comparison of obstruction theories}
\label{sec4.6}

Let $c\in \Ao$ be any closed point. We denote the restrictions to fibers over $c$ by 
$$\iota_c: \cV_c=\cV\times_{\Ao}c \mapright{\sub} \cV\and
\ee_{\fv_c}=\ee_\fv|_{\cC_{\cV_c}}: \cC_{\cV_c}=\cC_\cV\times_\cV\cV_c \lra\cZ_c=\cZ\times_{\Ao}c.
$$
Recall by Proposition \ref{ob-WV} that composing the tautological
$\TT_{\cV_c/\fdg}\to \iota_c\sta \TT_{\cvgp/\tfdg}$ with $\iota_c\sta\phi_{\cV/\tfdg}$ gives
the perfect relative obstruction theory 
$$\phi_{\cV_c/\fdg}: \TT_{\cV_c/\fdg}\lra \EE_{cV_c/\fdg}\defeq \iota_c\sta \EE_{\cV/\tfdg}.
$$
(Note that for $c\ne 0$, $\cV_c=\bcM_g(\Pf,d)^p$, and this obstruction theory coincide with the one constructed
in Subsection \ref{sec3.1}.)

We now compare the obstruction theory 
$\phi_{\cV/\fdg}$ with $\phi_{\cV_c/\fdg}$. 
Using the tautological exact sequence
\beq\label{sequences}
0\lra T_{\cZ_c/\cC_\fdg}\lra  T_{\cZ/\cC_\fdg}|_{\cZ_c} \lra \sO_{\cZ_c}\lra  0,
\eeq
we obtain a morphism of distinguished triangles (the top line is an exact sequence of sheaves):
 \beq
 \begin{CD}
\ee_{\fv_c}^\ast \Omega\dual_{\cZ_c/\cC_\fdg}@>>>\ee_{\fv_c}^\ast \bl\Omega\dual_{\cZ/\cC_\fdg}|_{\cZ_c}\br@>>>
\ee_{\fv_c}^\ast \sO_{\cZ_c}\cong\sO_{\cC_{\cV_c}}@>{+1}>> 0\\
@AAA@AAA@AAA@.\\
 \TT_{\cC_{\cV_c}/\cC_\fdg}@>>>\TT_{\cC_\cvgp/\cC_\fdg}|_{\cC_{\cV_c}}@>>>\TT_{\cC_c/\cC_\cvgp}[1]@>{+1}>>
 \end{CD}
\eeq

By projection formula, we have a morphism of distinguished triangles
\beq\label{KKP0}
\begin{CD} R^\ast\pi_{\cV_c\ast}\sO_{\cV_c}[-1]@>>>\EE_{\cV_c/\fdg}@>{\beta'}>>\EE'_{\cV/\fdg}|_{\cV_c}@>{+1}>>\\
@AAA@AA{\phi_{\cV_c/\fdg}}A@AA{\phi'_{\cV/\fdg}|_{\cV_c}}A@.\\
\TT_{\cV_c/\cV}@>>> \TT_{\cV_c/\fdg}@>{\gamma_0}>>\TT_{\cV/\fdg}|_{\cV_c}@>{+1}>>.
\end{CD}
\eeq 
Applying  the mapping  cone construction (\ref{dist}) to 
the top row of (\ref{KKP0}), and using  the octahedral axiom, we obtain a compatible diagram of mapping cones 
\beq\label{ver}
\begin{CD}
@.@.R^1\pi_{\cV_c\ast}\sO_{\cC_{\cV_c}}[-1]@>{=}>>R^1\pi_{\cV_c\ast}\sO_{\cC_{\cV_c}}[-1]\\
@.@.@AA{\mu|_{\cV_c}}A@AAA\\
R^\ast\pi_{\cV_c\ast}\sO_{\cC_{\cV_c}}[-1]@>>>\EE_{\cV_c/\fdg}@>{\beta'}>>\EE'_{\cV/\fdg}|_{\cV_c}@>>> R^\ast\pi_{\cV_c\ast}\sO_{\cV_c} \\
@AAA@AAA@AA{\eta|_{\cV_c}}A@AAA\\
\pi_{\cV_c\ast} \sO_{\cC_{\cV_c}}[-1]@>>>\EE_{\cV_c/\fdg}@>{\beta_0}>>\EE_{\cV/\fdg}|_{\cV_c}@>{\beta_1}>>\pi_{\cV_c\ast} \sO_{\cV_c}.
\end{CD}
\eeq
Restricting the perfect obstruction theory $\phi_{\cV/\fdg}$ (cf. Proposition \ref{familyob}) to $\cV_c$,
we obtain the following (not necessarily commuting) homomorphisms 
\beq\label{2nd}
\begin{CD}
\EE_{\cV_c/\fdg}@>{\beta_0}>>\EE_{\cV/\fdg}|_{\cV_c}\\
@AA{\phi_{\cV_c/\fdg}}A@AA{\phi_{\cV/\fdg}|_{\cV_c}}A\\
\TT_{\cV_c/\fdg}@>{\gamma_0}>>\TT_{\cV/\fdg}|_{\cV_c}.
\end{CD}
 \eeq

We consider 
$$\delta=\phi_{\cV/\fdg}|_{\cV_c}\circ  \gamma_0-\beta_0\circ\phi_{\cV_c/\fdg}:  
\TT_{\cV_c/\fdg}\lra\EE_{\cV/\fdg}|_{\cV_c}.
$$ 
Applying the commutative diagrams (\ref{equation}), (\ref{ver}) and (\ref{KKP0}), we conclude that
$$\eta|_{\cV_c}\circ\delta=\eta|_{\cV_c}\circ\phi_{\cV/\fdg}|_{\cV_c}\circ  \gamma_0 -\eta|_{\cV_c}\circ
 \beta_0\circ\phi_{\cV_c/\fdg}=\phi'_{\cV/\fdg}|_{\cV_c}\circ\gamma_0-\beta'\circ\phi_{\cV_c/\fdg}=0.
 $$  
Therefore, $\delta$ factors through $R^1\pi_{\cV_c\ast}\sO_{\cV_c}[-2]\to \EE_{\cV/\fdg}|_{\cV_c}$.

Because of this, after applying the truncation functor $\tau_{\le 1}$ to (\ref{2nd}),
we obtain a commutative square
\beq\label{1st}
\begin{CD}
 \EE_{\cV_c/\fdg}@>{\beta_0}>>\EE_{\cV/\fdg}|_{\cV_c}\\
 @AA{\phi^{\le 1}_{\cV_c/\fdg}}A@AA{\phi^{\le 1}_{\cV/\fdg}|_{\cV_c}}A\\
 \TT^{\leq 1}_{\cV_c/\fdg}@>{\gamma_0}>>\TT_{\cV/\fdg}|_{\cV_c}^{\leq 1}
\end{CD}
 \eeq 
On the other hand, applying the truncation functor $\tau_{\le 1}$ to the left square in (\ref{KKP0}),
we obtain another commutative square
\beq\label{KKP01}
 \begin{CD} \pi_{\cV_c\ast}\sO_{\cC_{\cV_c}}[-1]@>>>\EE_{\cV_c/\fdg}\\
  @AAA@AA{\phi^{\le 1}_{\cV_c/\fdg}}A\\
  \TT^{\leq 1}_{\cV_c/\cV}@>>> \TT^{\leq 1}_{\cV_c/\fdg}.
 \end{CD}
 \eeq  
Combined, we have a commutative diagrams
 \beq\label{KKPdiag}
 \begin{CD} 
 \sO_{\cV_c}[-1]\cong\pi_{\cV_c\ast}\sO_{\cC_{\cV_c}}[-1]@>>>\EE_{\cV_c/\fdg}@>{\beta_0}>>\EE_{\cV/\fdg}|_{\cV_c}@>{+1}>>\\
  @AAA@AA{\phi^{\le 1}_{\cV_c/\fdg}}A@AA{\phi^{\le 1}_{\cV/\fdg}|_{\cV_c}}A \\
    \TT^{\leq 1}_{\cV_c/\cV}@>>> \TT^{\leq 1}_{\cV_c/\fdg}@>{\gamma_0}>>\TT_{\cV/\fdg}|_{\cV_c}^{\leq 1}
 \end{CD}
\eeq  
By (\ref{ver}) the top row is a distinguished triangle (but not the lower one).
%
%
%
 
We comment that applying results in \cite{Kresch2}, this diagram implies that the virtual cycles of $\cV_c$ is the pull back
via $\iota_c: \cV_c\to\cV$ of the virtual cycle of $\cV$. In our case, we are using localized virtual cycles
via cosections of the obstruction sheaves, thus we need to construct a cosection of the obstruction sheaf
$$\Ob_{\cV}=\coker\{ T_{\fdg}\otimes_{\sO_\fdg}\sO_\cV \lra H^1(\EE_{\cV/\fdg})\}.
$$

\subsection{Family cosection}
 
We first construct a cosection of the obstruction sheaf $\Ob_{\cvgp/\tfdg}$. The construction is parallel to the
case $\cpgp=\bcM_g(\Pf,d)^p$.  

First, we define a bi-linear morphism of bundles
$$h: \Vb(\sL_{\tfdg}\upf\oplus \sL_{\tfdg}\of\oplus \sP_{\tfdg})\lra 
\Vb(\sL_{\tfdg}\of)\times_{\cC_{\tfdg}}\Vb(\sP_{\tfdg})\lra \Vb(\omega_{\cC_{\tfdg}/\tfdg}).
$$
Here the first arrow is $(\pr_2,\pr_3)$, where $\pr_i$ is the $i$-th projection; the second arrow is induced by 
tensoring of sheaves of $\sO_{\cC_{\tfdg}}$-modules 
$\sL_{\tfdg}\of\otimes\sP_{\tfdg}\to\omega_{\cC_{\tfdg}/\tfdg}$. Using that the family $\cZ\to \cC_{\tfdg}$ in \eqref{ZZ} 
is a subfamily
$$\cZ\sub   \Vb(\sL_{\tfdg}\upf)\sta \times_{\cC_{\tfdg}}
 \Vb(\sL_{\tfdg}\of)\times_{\cC_{\tfdg}}
  \Vb( \sP_{\tfdg})
  \sub \Vb(\sL_{\tfdg}\upf\oplus \sL_{\tfdg}\of\oplus \sP_{\tfdg}),
$$
composing with $h$, we obtain a $\cC_{\tfdg}$-morphism
\beq\label{Z-eta}
\cZ\lra \Vb(\omega_{\cC_{\tfdg}/\tfdg}).
\eeq

\begin{lemm}
The homomorphism \eqref{Z-eta} induces a homomorphism
$$\sigma^\bullet: \EE_{\cvgp/\tfdg}\lra  R^1\pi_{\cvgp\ast} \sO_{\cC_{\cvgp}}[-1]
$$
whose restriction to $\cvgp\times_{\Ao} c\cong \cpgp$, $c\ne 0$, is  proportional (by an element in $\CC\sta$) 
to $\sigma_1^\bullet$ in 
\eqref{si-cosection}.
\end{lemm}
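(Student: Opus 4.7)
My plan is to mirror the construction of $\sigma_1^\bullet$ in Subsection 3.2. I would first take the relative differential of the bundle morphism $h$, obtaining
\begin{equation*}
dh: T_{\Vb(\sL_{\tfdg}\upf\oplus \sL_{\tfdg}\of\oplus \sP_{\tfdg})/\cC_\tfdg} \lra h^*\, T_{\Vb(\omega_{\cC_\tfdg/\tfdg})/\cC_\tfdg} \cong h^*\omega_{\cC_\tfdg/\tfdg},
\end{equation*}
which on a fiber point $(z,y,p)$ sends $(\mathring z, \mathring y, \mathring p) \mapsto p\cdot \mathring y + y\cdot \mathring p$, in direct analogy with $dh_1$ in (3.5)--(3.6). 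I would then restrict to $T_{\cZ/\cC_\tfdg}$, pull back via $\ti\ee: \cC_\cvgp \to \cZ$ (so that the codomain pulls back to $\omega_{\cC_\cvgp/\cvgp}$), and apply $R\pi_{\cvgp\ast}$. This yields $\sigma^\bullet: \EE_{\cvgp/\tfdg} \to R\pi_{\cvgp\ast}\omega_{\cC_\cvgp/\cvgp}$, which Serre duality for the family $\pi_\cvgp: \cC_\cvgp \to \cvgp$ of nodal curves identifies with the target in the statement.

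For the proportionality, fix $c \in \Ao$ with $c\ne 0$. By Lemma \ref{tils} and the construction of $\cZ$ in \eqref{Zpri}--\eqref{ZZ}, the fiber $\cZ \times_\Ao c$ is cut out by $\sum z_i^5 - cy = 0$. Consequently, on the relative tangent $T_{\cZ/\cC_\tfdg}|_{\cZ\times_\Ao c}$, one has $\mathring y = 5\sum z_i^4 \mathring z_i/c$. Substituting into $dh$ gives
\begin{equation*}
dh\big|_{\cZ\times_\Ao c}(\mathring z, \mathring y, \mathring p) = p\mathring y + y\mathring p = \tfrac{1}{c}\bl 5p\sum z_i^4\,\mathring z_i + cy\cdot \mathring p\br = \tfrac{1}{c}\bl 5p\sum z_i^4\,\mathring z_i + \sum z_i^5\cdot \mathring p\br,
\end{equation*}
where the last equality uses $cy = \sum z_i^5$ on $\cZ\times_\Ao c$. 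Pulling back via $\ti\ee|_{\cV_c}$, which substitutes $z_i\mapsto \fu_i$ and $p\mapsto \fp$, this expression is precisely $(1/c)\cdot\zeta(\mathring p,\mathring u)$ from \eqref{zeta}. Applying $R\pi_{\cvgp\ast}$ and invoking Lemma \ref{same} (which identifies $\zeta$ with $\sigma_1$) then yields $\sigma^\bullet|_{\cvgp\times_\Ao c} = (1/c)\sigma_1^\bullet$, the required proportionality with scalar $1/c \in \CC\sta$.

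The main obstacle is the diagram chase ensuring that the derived pullbacks through the Cartesian squares defining $\ti\ee$, together with the Serre duality identification, deliver precisely the target $R^1\pi_{\cvgp\ast}\sO_{\cC_\cvgp}[-1]$ as stated. This bookkeeping parallels the argument in Lemma \ref{cone}, but must be carried out in the relative setting over $\tfdg = \fdg\times\Ao$ with the extra $\sL_\tfdg\of$-factor present in $\cZ$. Once the diagrams are in place, the algebraic elimination of $\mathring y$ via the defining equation of $\cZ$ is short and directly yields the $1/c$ scaling on each fiber over $c\ne 0$.
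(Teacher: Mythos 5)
Your proposal is correct and follows essentially the same route as the paper, which simply declares the proof to be "exactly as in Section 3.2" (i.e., differentiate the pairing morphism $h$, pull back along the evaluation map, and push forward). Your extra step of eliminating $\mathring y$ via the relation $\sum 5z_i^4\mathring z_i - c\,\mathring y=0$ on $T_{\cZ/\cC_\tfdg}$ over $t=c$ is a valid and welcome refinement: it identifies the proportionality scalar explicitly as $1/c$, which the paper leaves implicit.
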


\begin{proof}
The proof is exactly as in Section 3.2. We will omit it here.
\end{proof}

We denote
\beq\label{si-V}
\sigma=H^1(\sigma^\bullet): \Ob_{\cvgp/\tfdg}\defeq H^1(\EE_{\cvgp/\tfdg})\lra R^1\pi_{\cvgp\ast}\omega_{\cC_{\cvgp}/\cvgp}
\cong \sO_{\cvgp}.
\eeq

\vsp

Let $\ti \fq:\cvgp\to \tfdg$ be the projection.  The distinguished triangle
$\ti\fq^\ast\LL_{\tfdg}\to\LL_{\cvgp}\to\LL_{\cvgp/\tfdg}\to\ti\fq^\ast\TT_{\tfdg}[1]$
gives a morphism 
$\ti\fq^\ast\TT_{\tfdg}\to \TT_{\cvgp/\tfdg}[1]$, which 
composed with $\phi_{\cvgp/\tfdg}:\TT_{\cvgp/\tfdg}\to \EE_{\cvgp/\tfdg}$ gives
$$\eta: \ti\fq^\ast\TT_{\tfdg}\lra  \EE_{\cvgp/\tfdg}[1].
$$ 
Taking the cokernel of  the $H^0$ of this arrow, we obtain the absolute obstruction sheaf
\beq\label{tangf}
\Ob_{\cvgp}\defeq \coker\{ H^0(\eta): \ti\fq^\ast\Omega_{\tfdg}\dual\lra  H^1(\EE_{\cvgp/\tfdg})\}.
\eeq

\begin{lemm}\label{family-cone}
Th following composite vanishes
\beq\label{familyseq}
\ti\fq\sta\Omega_{\tfdg}\dual \mapright{H^0(\eta)} H^1(\EE_{\cvgp/\tfdg})\mapright{\sigma} 
R^1\pi_{\cvgp\ast}\omega_{\cC_{\cvgp}/\cvgp}.
\eeq
\end{lemm}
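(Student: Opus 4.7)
The plan is to adapt the argument of Lemma \ref{cone} to the family setting over $\tfdg$. By the construction in the paragraphs preceding \eqref{familyseq}, $\eta$ equals $\phi_{\cvgp/\tfdg}\circ\delta\dual$, where $\delta\dual:\ti\fq\sta\TT_\tfdg\to\TT_{\cvgp/\tfdg}[1]$ is the connecting map in the tangent distinguished triangle for $\cvgp\to\tfdg$. Consequently $H^0(\eta)$ factors through $H^1(\TT_{\cvgp/\tfdg})$, and it suffices to prove the stronger vanishing
$$H^1(\sigma\bul\circ\phi_{\cvgp/\tfdg})=0 : H^1(\TT_{\cvgp/\tfdg}) \lra H^1(\EE_{\cvgp/\tfdg}) \lra R^1\pi_{\cvgp\ast}\omega_{\cC_\cvgp/\cvgp}.$$

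To carry this out I will introduce the direct image cone $\fC_\omega \defeq C(\pi_{\tfdg\ast}\omega_{\cC_\tfdg/\tfdg})$. By Proposition \ref{stack}, $\fC_\omega\cong \Spec_\tfdg \Sbul R^1\pi_{\tfdg\ast}\sO_{\cC_\tfdg}$, which is a rank $g$ vector bundle over $\tfdg$; in particular $\fC_\omega\to\tfdg$ and $\cC_{\fC_\omega}\to\cC_\tfdg$ are smooth. Combining the tautological coordinate $\fy$ and $p$-field $\fp$ of \eqref{section} via the canonical pairing $\sL_\cvgp\of\otimes\sP_\cvgp\cong \omega_{\cC_\cvgp/\cvgp}$, the element
$$\eps \defeq \fy\cdot\fp \in \Gamma(\cC_\cvgp,\omega_{\cC_\cvgp/\cvgp})$$
defines a $\tfdg$-morphism $\Phi_\eps:\cvgp\to \fC_\omega$, together with a canonical lift $\ti\Phi_\eps:\cC_\cvgp\to \cC_{\fC_\omega}$ coming from the fact that both $\cC_\cvgp$ and $\cC_{\fC_\omega}$ are pull backs of $\cC_\tfdg$.

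By the explicit description of $h$ in \eqref{Z-eta} as the projection $(\pr_2,\pr_3)$ followed by the pairing $\sL_\tfdg\of\otimes\sP_\tfdg\to \omega_{\cC_\tfdg/\tfdg}$, there is a commutative square of $\cC_\tfdg$-morphisms
$$
\begin{CD}
\cC_\cvgp @>{\ee_\fv}>> \cZ \\
@VV{\ti\Phi_\eps}V @VV{h}V \\
\cC_{\fC_\omega} @>{\ee'}>> \Vb(\omega_{\cC_\tfdg/\tfdg}),
\end{CD}
$$
in direct analogy with \eqref{comm}. Applying the relative tangent complex functor and then $R\pi_{\cvgp\ast}$ together with the projection formula yields a diagram analogous to \eqref{comm2}, from which $\sigma\bul\circ\phi_{\cvgp/\tfdg}$ factors as
$$H^1(\TT_{\cvgp/\tfdg}) \lra H^1(\Phi_\eps\sta\TT_{\fC_\omega/\tfdg}) \mapright{H^1(\Phi_\eps\sta\phi_{\fC_\omega/\tfdg})} \Phi_\eps\sta R^1\pi_{\fC_\omega\ast}\omega_{\cC_{\fC_\omega}/\fC_\omega}.$$
The middle obstruction assignment of $\fC_\omega\to\tfdg$ is trivial because $\fC_\omega\to\tfdg$ is smooth, exactly as in Lemma \ref{cone}, and hence $H^1(\sigma\bul\circ\phi_{\cvgp/\tfdg})=0$.

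The main obstacle is verifying the commutativity of the key square, where $\fy$ plays the role previously played by $\sum \fu_i^5$ in Lemma \ref{cone}. This reduces to checking that the second and third factors of $\cZ \subset \Vb(\sL_\tfdg\upf\oplus\sL_\tfdg\of\oplus \sP_\tfdg)$ evaluate to $\fy$ and $\fp$ on $\cC_\cvgp$ under $\ee_\fv$, so that $h\circ\ee_\fv$ yields exactly $\eps=\fy\cdot\fp$; this is a diagrammatic check from the definitions of $\ee_\fv$ in \eqref{ev-V} and the sections in \eqref{section}.
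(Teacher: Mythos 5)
Your proposal is correct and follows essentially the same route the paper intends: the paper omits the proof, stating it is "exactly the same as that of Proposition \ref{hahaha}," whose core is Lemma \ref{cone}, and you have carried out precisely that adaptation — factoring $H^0(\eta)$ through $H^1(\TT_{\cvgp/\tfdg})$, introducing the family direct image cone $C(\pi_{\tfdg\ast}\omega_{\cC_\tfdg/\tfdg})$ with $\eps=\fy\cdot\fp$ (the correct replacement for $\fp\cdot\sum\fu_i^5$, matching the definition of $h$ in \eqref{Z-eta}), and concluding from the smoothness of this cone over $\tfdg$. No gaps.
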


\begin{proof}The proof is exactly the same as the that of Proposition \ref{hahaha},
and will be omitted.
\end{proof} 

This immediately gives

\begin{coro}\label{hahahaha}
The cosection $\sigma:\Ob_{\cvgp/\tfdg}\to \sO_{\cvgp}$ lifts to a cosection $\bar\sigma: \Ob_{\cvgp}\to \sO_{\cvgp}$.
\end{coro}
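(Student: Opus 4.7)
The plan is to derive Corollary \ref{hahahaha} directly from Lemma \ref{family-cone} via the universal property of cokernel. Recall from \eqref{tangf} that the absolute obstruction sheaf is defined as
$$\Ob_{\cvgp}=\coker\bigl\{H^0(\eta):\ti\fq\sta\Omega_{\tfdg}\dual\lra H^1(\EE_{\cvgp/\tfdg})=\Ob_{\cvgp/\tfdg}\bigr\},$$
so we have a right-exact sequence
$$\ti\fq\sta\Omega_{\tfdg}\dual\lra \Ob_{\cvgp/\tfdg}\lra \Ob_{\cvgp}\lra 0.$$
By the universal property of cokernel, a morphism of sheaves $\sigma:\Ob_{\cvgp/\tfdg}\to\sO_{\cvgp}$ factors uniquely through $\Ob_{\cvgp}$ if and only if the composite $\sigma\circ H^0(\eta):\ti\fq\sta\Omega_{\tfdg}\dual\to\sO_{\cvgp}$ vanishes.

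Thus the entire content of the statement is the vanishing of this composition. But composing with the canonical isomorphism $R^1\pi_{\cvgp\ast}\omega_{\cC_{\cvgp}/\cvgp}\cong\sO_{\cvgp}$, this composition is exactly \eqref{familyseq} in Lemma \ref{family-cone}, which has already been established. Therefore the desired lift $\bar\sigma:\Ob_{\cvgp}\to\sO_{\cvgp}$ exists (and is unique by right-exactness of the defining sequence of $\Ob_{\cvgp}$).

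In short, the only step is invoking the cokernel property together with Lemma \ref{family-cone}; there is no further work beyond the identification of the two compositions. The main obstacle has already been overcome in Lemma \ref{family-cone}, which itself reduces (by the argument of Proposition \ref{hahaha}) to the fact that the ``$p\cdot\sum u_i^5$'' construction factors through the direct image cone $\fC_\omega$ of $\omega_{\cC_{\tfdg}/\tfdg}$ over $\tfdg$, whose obstruction assignment is zero because $\fC_\omega\to\tfdg$ is smooth. Once that vanishing is in hand, the descent of $\sigma$ to $\bar\sigma$ is formal.
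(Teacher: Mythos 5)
Your proof is correct and is exactly the paper's argument: the paper derives the Corollary from Lemma \ref{family-cone} with the phrase ``this immediately gives,'' the implicit content being precisely the cokernel universal property you spell out, since $\Ob_{\cvgp}$ is by definition the cokernel of $H^0(\eta)$. Your closing summary of why Lemma \ref{family-cone} holds also matches the paper's reasoning (reduction to the smoothness of $\fC_\omega\to\tfdg$ as in Proposition \ref{hahaha}).
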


%
%

Lastly, we describe the degeneracy (non-surjective) loci of $\sigma$.
As before, we say $\sigma$ is degenerate at $\xi\in\cvgp$ if $\sigma|_\xi$ is not
surjective (i.e. is trivial).
Let $\xi\in\cvgp$ be any closed point; $\xi$ is represented by
$$((\phi_i),b,p)\in  H^0(L\upf)\times H^0(L\of)\times H^0(L^{-\otimes 5}\otimes \omega_C)
$$ 
for $(C,L)\in \fdg$ the point under $\xi$.
Then $\sigma|_\xi: \Ob_{\cvgp/\tfdg}|_\xi 
\to \CC$
is identical to the composite of the inclusion
$$\Ob_{\cvgp/\tfdg}|_\xi\sub H^1(L\upf)\oplus H^1(L\of)\oplus H^1(L^{-\otimes 5}\otimes\omega_C)
$$
with the pairing
$$H^1(L\upf)\oplus H^1(L\of)\oplus H^1(L^{-\otimes 5}\otimes\omega_C)\lra H^1(\omega_C)
$$
defined via $((\mathring \phi_i), \mathring b, \mathring p)\mapsto  \mathring b\cdot p+b\cdot \mathring p$.
Like the proof of Proposition \ref{deg}, this description shows that the degeneracy loci of $\sigma$
is $\bcM_g(Q,d)\times\Ao\sub \cvgp$, where the inclusion is via vanishing $p$-fields and the inclusion
$\bcM_g(Q,d)\times\Ao\sub \bcM_g(V,d)$ induced by the tautological inclusion $Q\times \Ao\sub V$.

\begin{lemm}\label{degWV}
The degeneracy loci of the cosection $\bar\sigma$ is $\bcM_g(Q,d)\times \Ao \sub\cvgp$; it is proper over $\Ao$.
\end{lemm}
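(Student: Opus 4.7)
The plan is to reduce to the pointwise description of $\sigma|_\xi$ supplied in the paragraph preceding the lemma, and then argue case by case, closely following the proof of Proposition~\ref{deg}. First I observe that $\bar\sigma$ and $\sigma$ have the same degeneracy locus: by Corollary~\ref{hahahaha}, $\sigma$ factors as $\bar\sigma$ composed with the canonical surjection $\Ob_{\cvgp/\tfdg}\twoheadrightarrow \Ob_{\cvgp}$, so $\sigma|_\xi$ is surjective iff $\bar\sigma|_\xi$ is. Fix a closed $\xi=(C,L,(\phi_i),b,p)\in\cvgp$ lying over $c\in\Ao$; the defining equation $\ti s=0$ of $V\subset B\times\Ao$ forces $\sum \phi_i^5=c\,b$, and the cosection is the pairing $(\mathring \phi_i,\mathring b,\mathring p)\mapsto \mathring b\cdot p+b\cdot \mathring p$ restricted to the image of $\Ob_{\cvgp/\tfdg}|_\xi$ inside $H^1(L^{\oplus 5})\oplus H^1(L^{\otimes 5})\oplus H^1(L^{-\otimes 5}\otimes\omega_C)$. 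By \eqref{HH0} the first two components are constrained to lie in the kernel of $(\mathring \phi_i,\mathring b)\mapsto \sum 5\phi_i^4\mathring \phi_i-c\,\mathring b$ (coming from the sheaf $\sH$), while $\mathring p$ is unconstrained.

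The case analysis then runs as follows. If $b\neq 0$, pick $\mathring p\in H^1(L^{-\otimes 5}\otimes\omega_C)$ with $b\cdot\mathring p\neq 0$ via the Serre-duality pairing with $H^0(L^{\otimes 5})$; then $(0,0,\mathring p)\in \Ob_{\cvgp/\tfdg}|_\xi$ witnesses surjectivity. If $b=0$ and $p\neq 0$, then $\sum \phi_i^5=c\,b=0$, so $(f,C)$ factors through $Q$, in particular the $\phi_i$ have no common zero, and some $\phi_k^4\neq 0$ in $H^0(L^{\otimes 4})$, so that $p\phi_k^4\neq 0$ in $H^0(L^{-1}\otimes\omega_C)$; by Serre duality choose $\mathring \phi_k\in H^1(L)$ with $5\,p\phi_k^4\mathring \phi_k\neq 0$. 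Setting the other $\mathring \phi_i=0$ and $\mathring b=c^{-1}\cdot 5\phi_k^4\mathring \phi_k$ when $c\neq 0$, or taking all $\mathring \phi_i=0$ together with any $\mathring b\in H^1(L^{\otimes 5})$ with $\mathring b\cdot p\neq 0$ when $c=0$, produces a triple in $\Ob_{\cvgp/\tfdg}|_\xi$ on which $\sigma|_\xi$ is nonzero. The remaining case $b=0,\, p=0$ trivially yields $\sigma|_\xi\equiv 0$.

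Combining the three cases, the degeneracy locus equals $\{b=0,\,p=0\}$, and under the identification $b=0\Leftrightarrow \sum \phi_i^5=0$ this is exactly $\bcM_g(Q,d)\times\Ao\subset \cvgp$, where $Q$ sits inside $V_c=\Pf$ as the quintic for $c\neq 0$ and inside $V_0=N$ as the zero section. Properness over $\Ao$ is then immediate, since $\bcM_g(Q,d)\to\Spec\CC$ is proper as a moduli of stable maps to a projective target, and properness is stable under base change to $\Ao$. The main subtlety I anticipate is the $c\neq 0$ part of the second case: there the family pairing $\mathring b\cdot p$ must be rewritten, using the derivative $\sum 5\phi_i^4\mathring \phi_i=c\,\mathring b$ of the constraint $\sum \phi_i^5=tb$, as the Proposition~\ref{deg}-style expression $c^{-1}\sum 5\,p\phi_i^4\mathring \phi_i$, so that the base-point freeness argument of Proposition~\ref{deg} can be invoked verbatim.
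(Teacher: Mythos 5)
Your proposal is correct and follows essentially the same route as the paper: reduce to the pointwise formula $(\mathring\phi_i,\mathring b,\mathring p)\mapsto \mathring b\cdot p+b\cdot\mathring p$ on (the image of) $\Ob_{\cvgp/\tfdg}|_\xi$, run the Serre-duality case analysis of Proposition \ref{deg}, and note that $\bar\sigma$, being a lift of $\sigma$, has the same degeneracy locus. In fact you supply more detail than the paper (which only says "like the proof of Proposition \ref{deg}"), in particular the correct handling of the $\sH$-constraint $\sum 5\phi_i^4\mathring\phi_i=c\,\mathring b$ separately for $c\neq 0$ and $c=0$; just note that your shorthand "$b=0\Leftrightarrow\sum\phi_i^5=0$" is only valid on fibers with $c\neq 0$, though your subsequent description of $Q\subset V_0=N$ as the zero section already says the right thing.
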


\begin{proof}
We first need to verify that $\sigma$ is as given. The proof of this is exactly the same as that of 
Lemma \ref{same}. Using this
description, we argue that the degeneracy loci of the cosection $\sigma:\Ob_{\cvgp/\fdg}\to\sO_{\cvgp}$ is 
$\bcM_g(Q,d)\times \Ao \sub\cvgp$; thus is proper over $\Ao$. Since $\bar\si$ is a lift of $\sigma$, the degeneracy
loci of $\bar\si$ coincides with that of $\sigma$. This proves the Lemma.
\end{proof}

\subsection{The constancy of the invariants}

By direct verification, the virtual dimension of $\cvgp$ is one.
Using Lemma \ref{degWV} and Corollary \ref{hahahaha}, following the convention introduced
in Subsection 3.6, we denote by 
$$h^1/h^0(\EE_{\cV/\fdg})_{\bar\sigma}\sub h^1/h^0(\EE_{\cV/\fdg})
$$
the kernel of a cone-stack morphism $h^1/h^0(\EE_{\cV/\fdg})\to\CC_\cV$
induced by $\bar\sigma$ defined as in 
\eqref{cone-stack}.\footnote{ It is $h^1/h^0(\EE_{\cV/\fdg})$ along the degeneracy loci and is the kernel of
$h^1/h^0(\EE_{\cV/\fdg})\to\sO_{\cvgp}$ induced by $\sigma$ away from the degeneracy loci.}

Let 
$$[\bC_{\cpgp/\fdg}]\in Z\lsta h^1/h^0(\EE_{\cV/\fdg})
$$
be the intrinsic normal cone embedded using the obstruction theory $\phi_{\cvgp/\fdg}$.
Because of Lemma \ref{degWV} and Corollary \ref{hahahaha}, applying \cite[Thm 5.1]{KL} 
we conclude that
$$[\bC_{\cpgp/\fdg}]\in Z\lsta h^1/h^0(\EE_{\cV/\fdg})_{\bar\sigma}.
$$
We then applying the localized Gysin map \cite{KL}
$$0^!_{\bar\sigma,\mathrm{loc}}: A\lsta h^1/h^0(\EE_{\cV/\fdg})_{\bar\sigma}\lra A\lsta (\bcM_g(Q,d)\times\Ao).
$$

\begin{defi}\label{family-cycle} We define the localized virtual cycle of $(\cvgp,\bar\sigma)$ be
$$[\cvgp]\virt_{\bar\sigma}:=0^!_{\bar\sigma,\mathrm{loc}}([\bC_{\cvgp/\fdg}])\in A_1 (\bcM_g(Q,d)\times\Ao).
$$
 \end{defi}

Now let $c\in \Ao$ be any closed point and let $\jmath_c: c\to \Ao$ be the closed inclusion. We denote $\cN:=\cV\times_{\Ao} 0$.
By the compatibility stated in diagram (\ref{KKPdiag}) and Corollary (\ref{hahahaha}), we apply \cite[Thm 5.3]{KL} to obtain

\begin{prop}\label{shriek}
Under the shriek operation of cycles ($c\ne 0$),
$$\jmath_c\sta ([\cvgp]\virt_{\bar\sigma})=[\cpgp]\virt_{\bar\sigma_1}\in  A_0 \bcM_g(Q,d),
\quad 
\jmath_0^\ast([\cvgp]\virt_{\bar\sigma})=[\cN]\virt_{\bar\sigma_0}\in  A_0 \bcM_g(Q,d).
$$
\end{prop}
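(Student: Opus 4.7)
The plan is to apply the Kiem--Li pullback formula for cosection-localized virtual cycles \cite[Thm 5.3]{KL} to the closed embedding $\jmath_c:\{c\}\hookrightarrow\Ao$. Since $\cvgp\to\Ao$ is flat of relative virtual dimension zero and $\{c\}\hookrightarrow\Ao$ is a regular embedding of codimension one, we obtain a refined Gysin (shriek) map on cycles. The target is to show that this map sends the family localized cycle $[\cvgp]\virt_{\bar\sigma}$ to the localized cycle of the fiber $\cV_c=\cvgp\times_\Ao\{c\}$ constructed using the fiberwise obstruction theory and cosection.

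Invoking \cite[Thm 5.3]{KL} requires two compatibilities between the family-level and fiber-level data. The first is a compatibility of obstruction theories: writing $\iota_c:\cV_c\hookrightarrow\cvgp$ for the fiber inclusion, one needs a commuting diagram in which the relative obstruction theories $\phi_{\cV_c/\fdg}$ and $\iota_c^\ast\phi_{\cvgp/\fdg}$ are connected by the (dual of the) normal complex of $\iota_c$. This is exactly the content of diagram (\ref{KKPdiag}) constructed in Subsection \ref{sec4.6}: its top row is a distinguished triangle whose left entry $\pi_{\cV_c\ast}\sO_{\cC_{\cV_c}}[-1]\cong\sO_{\cV_c}[-1]$ is the correct shifted normal sheaf for the regular embedding $\iota_c$, and the two right vertical arrows are perfect obstruction theories.

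The second compatibility is between the cosections. By the lemma preceding (\ref{si-V}) the restriction of $\sigma^\bullet$ to $\cV_c$ agrees, up to an invertible scalar, with $\sigma_1^\bullet$ when $c\ne 0$, and by the construction at the end of Proposition \ref{ob-WV} it agrees with the analogously defined object at $c=0$. Taking $H^1$ and using Corollary \ref{hahahaha} together with Proposition \ref{hahaha} transports this to the lifts: $\iota_c^\ast\bar\sigma$ is proportional (by a unit) to $\bar\sigma_1$ for $c\ne 0$ and equals $\bar\sigma_0$ for $c=0$. Rescaling a cosection by a unit does not alter the localized virtual cycle. The degeneracy loci $\bcM_g(Q,d)\times\Ao\subset\cvgp$ and $\bcM_g(Q,d)\subset\cV_c$ are both proper by Lemma \ref{degWV}, so the Kiem--Li localized Gysin map is defined on all the relevant cones.

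The main obstacle will be to verify that (\ref{KKPdiag}) is in the precise form demanded by \cite[Thm 5.3]{KL}, in particular that the left column $\TT^{\leq 1}_{\cV_c/\cV}\to\pi_{\cV_c\ast}\sO_{\cC_{\cV_c}}[-1]$ is indeed induced by the Gysin pullback data for $\jmath_c$, and to check the compatibility of the chosen lift $\bar\sigma$ with the fiberwise lifts $\bar\sigma_1$ and $\bar\sigma_0$ (which involves the non-uniqueness of the lift in Proposition \ref{hahaha}/Corollary \ref{hahahaha}). Once these are checked, both identities of the proposition follow directly from \cite[Thm 5.3]{KL}.
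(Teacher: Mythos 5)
Your proposal is correct and follows the same route as the paper: the paper's proof consists precisely of citing the compatibility diagram (\ref{KKPdiag}) from Subsection \ref{sec4.6} together with Corollary \ref{hahahaha}, and then applying \cite[Thm 5.3]{KL}. Your additional remarks about the unit ambiguity in the cosection restriction and the non-uniqueness of the lift are reasonable points of care, but they do not change the argument, which is the one the paper gives.
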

 
Here $[\cN]\virt_{\bar\sigma_0}$ is the localized virtual cycle using the obstruction theory of
$\cN$ induced by the restricition of $\phi_{\cvgp/\tfdg}$(Prop \ref{ob-WV}) and the restriction of the cosection 
$\bar\sigma_0=\bar\sigma|_{\cN}$. 

\section{Gromov Witten invariant of $(K_{N},\bw_{\nq})$}
\def\lcng{_{\cng}}

We continue to denote by
$$r: \nq\lra Q
$$ 
the normal bundle to $Q$ in $\Pf$. 
Let $K_{\nq}$ be the total space of the canonical line bundle of $\nq$, which is isomorphic
to the underlying line bundle of the pull back $r\sta \sO(-5)$.
The duality paring $\sO_Q(5)\otimes_{\sO_Q}\sO_Q(-5)\to\sO_Q$ defines a
regular function $\bw_{\nq}\in \Gamma(\sO_{K_{\nq}})$. The degree $\deg [\cN]_{\bar\sigma_0}\virt$ are the Gromov-Witten
invariants
of the Landau-Ginzburg space $(K_{N},w_{\nq})$. 

We denote by $\bcM_g(\nq,d)$ the moduli space of genus $g$ degree $d$ stable morphisms to $\nq$,
where the degree is measured by their images in $\Pf$ via $\nq\to Q\sub \Pf$. Because 
$N=V\times\lAo 0$, canonically $\bcM_g(N,d)=\cV\times\lAo 0$.
The moduli of stable maps coupled with $p$-fields 
is identical to $\cN$
$$\cN:=\cV \times_{\Ao} 0=\bcM_g(V,d)^p\times\lAo 0\cong \bcM_g(N,d)^p.$$
We let 
$$(f_{\cN},\pi_{\cN}): \cC_{\cN}\lra N\times \cN
$$ 
be the universal map of $\cN$. By definition, it is the restriction of $(f_\cV, \pi_\cV, \cC_\cV)$ to 
the fiber over $0\in \Ao$.

\subsection{The invariants and the equivalence}
\def\cqg{\cQ}

As indicated in the beginning of Subsection \ref{sec4.6}, 
we have evaluation morphism 
$$\ee_{\cN}: \cC_\cN\lra \cZ_0=\cZ\times\lAo 0.
$$
 By construction in Proposition \ref{ob-WV},
\beq\label{ob-cngp}
\phi_{\cngp/\fdg}: \TT_{\cngp/\fdg}\lra R^\ast\pi_{\cngp\ast} \ee_\cN^\ast \TT_{\cZ_0/\cC_{\fdg}}
:=\EE_{\cngp/\fdg}
\eeq
is a perfect relative obstruction theory of $\cngp/\fdg$, which is identical to
the restriction of $\phi_{\cV/\tfdg}$ to the fiber over $0\in \Ao$.
%
%
%

We let 
$\sigma_0$ be the restriction of $\sigma$ to $\cN$:
\beq\label{sig-o}\sigma_0=\sigma|_{\cN}: \Ob_{\cN/\fdg}\lra \sO_\cN.
\eeq

\begin{prop}\label{degQ}
The cosection $\sigma_0$ lifts to a cosection $\bar\sigma_0:\Ob_\cN\to \sO_\cN$.
The degeneracy (non-surjective) loci of the cosection $\bar\sigma_0$ is $\barM_g(Q,d)\sub \barM_g(\nq,d)^P$; thus is proper.
\end{prop}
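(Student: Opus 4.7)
I obtain $\bar\sigma_0$ by restricting the family cosection $\bar\sigma$ of Corollary \ref{hahahaha} to the central fiber $\cN = \cV\times_\Ao\{0\}$, and then read off the degeneracy statement from Lemma \ref{degWV}.

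For the lift, let $\iota_0\colon \cN \hookrightarrow \cV$ be the closed immersion. By the construction of $\phi_{\cN/\fdg}$ in Proposition \ref{ob-WV} one has $\EE_{\cN/\fdg} = \iota_0^\ast \EE_{\cvgp/\tfdg}$, whence $\Ob_{\cN/\fdg} = \iota_0^\ast \Ob_{\cvgp/\tfdg}$. Applying the right-exact functor $\iota_0^\ast$ to \eqref{tangf} for $\cvgp$, and using the canonical splitting $T_\tfdg = T_\fdg \oplus \mathrm{pr}_\Ao^\ast T_\Ao$ together with $(\ti\fq\circ\iota_0)$ factoring through $\fdg\times\{0\}\hookrightarrow\tfdg$, one computes
$$\iota_0^\ast \Ob_\cvgp \;=\; \coker\bigl( q_0^\ast T_\fdg \oplus \sO_\cN \lra H^1(\EE_{\cN/\fdg})\bigr),$$
where $q_0\colon \cN \to \fdg$ is the projection. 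Since $\Ob_\cN$ is defined as the analogous cokernel without the $\sO_\cN$ summand, there is a canonical surjection $\tau\colon \Ob_\cN \twoheadrightarrow \iota_0^\ast \Ob_\cvgp$. I then define $\bar\sigma_0 := (\iota_0^\ast \bar\sigma)\circ \tau\colon \Ob_\cN \to \sO_\cN$. Restricting the identity $\bar\sigma\circ (\Ob_{\cvgp/\tfdg}\to\Ob_\cvgp) = \sigma$ from $\cV$ to $\cN$ shows that $\bar\sigma_0$ lifts $\sigma_0 = \sigma|_\cN$.

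For the degeneracy locus, since $\tau$ is surjective on every stalk, $\bar\sigma_0|_\xi$ is surjective at $\xi\in\cN$ if and only if $\bar\sigma|_\xi$ is. Hence $D(\bar\sigma_0) = D(\bar\sigma)\cap \cN$, and by Lemma \ref{degWV} this equals $\bigl(\bcM_g(Q,d)\times \Ao\bigr)\cap \bigl(\cV\times_\Ao\{0\}\bigr) \cong \bcM_g(Q,d)$, embedded in $\cN = \bcM_g(N,d)^p$ via the zero section $Q\hookrightarrow N$ together with vanishing $p$-field. Properness is then immediate: $\bcM_g(Q,d)$ is the Kontsevich moduli of stable maps to the smooth projective threefold $Q$, hence a proper Deligne--Mumford stack.

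The main bookkeeping issue is tracking the two bases $\tfdg = \fdg\times \Ao$ and $\fdg$: $\bar\sigma$ is a cosection of the absolute obstruction sheaf $\Ob_\cvgp$ formed relative to $\tfdg$, while $\bar\sigma_0$ must live on $\Ob_\cN$ formed relative to $\fdg$. The splitting of $T_\tfdg$ makes the comparison go in the favorable direction---$\Ob_\cN$ surjects onto $\iota_0^\ast\Ob_\cvgp$---so no additional vanishing argument is needed beyond what is packaged in Corollary \ref{hahahaha}. A self-contained alternative would be to rerun Lemma \ref{cone} directly on $\cN$ using the section $\eps_0 = \fp\cdot \fy \in \Gamma(\cC_\cN,\omega_{\cC_\cN/\cN})$ and the direct image cone $\fC_\omega$ of $\omega_{\cC_\fdg/\fdg}$; smoothness of $\fC_\omega\to\fdg$ kills the relevant obstruction and produces $\bar\sigma_0$ without reference to the family version.
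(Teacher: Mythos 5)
Your proposal is correct and follows essentially the same route as the paper, which simply deduces the statement from Lemma \ref{degWV} (together with Corollary \ref{hahahaha}) by restriction to the central fiber $\cN=\cV\times_{\Ao}\{0\}$. Your extra care in comparing the absolute obstruction sheaves over the two bases $\tfdg$ and $\fdg$ --- producing the surjection $\Ob_\cN\twoheadrightarrow \iota_0^\ast\Ob_{\cvgp}$ from the splitting $T_{\tfdg}=T_{\fdg}\oplus \mathrm{pr}_{\Ao}^\ast T_{\Ao}$ --- is a detail the paper leaves implicit, and it is handled correctly.
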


\begin{proof}
This follows directly from Lemma \ref{degWV}.
\end{proof}

Because the virtual dimension of $\cV$ is one, the virtual dimension of $\cN$ is $0$. 
By Proposition \ref{deg}, applying cosection localization Gysin map \cite[Theorem 5.1]{KL}, we obtain 

 \begin{defi-prop} We define the localized virtual cycle of $\barM_g(\nq,d)^P$ be
$$[\barM_g(\nq,d)^P]\virt_\sigma:=0^!_{\sigma,\mathrm{loc}}([\bC_{\barM_g(\nq,d)^P/\fdg}])\in A_0 \barM_g(Q,d);
$$
we denote $N_g(d)_{K_{N_Q}}=\deg \, [\barM_g(\nq,d)^P]\virt_\sigma$.
 \end{defi-prop}

We call $N_g(d)_{K_{N_Q}}$ the formal Landau-Ginzburg Model.

\begin{theo}\label{thm5.3}
For any positive $d$, the invariants coincide: $N_g(d)^p_\Pf=N_g(d)_{K_{N_Q}}$.
\end{theo}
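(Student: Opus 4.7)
The plan is to use the family localized virtual cycle $[\cvgp]\virt_{\bar\sigma} \in A_1(\bcM_g(Q,d)\times\Ao)$ constructed in Definition \ref{family-cycle}, together with the specialization formula of Proposition \ref{shriek}, to interpret both $N_g(d)^p_\Pf$ and $N_g(d)_{K_{N_Q}}$ as specializations of a single class on $\bcM_g(Q,d)\times\Ao$. The equality then reduces to the statement that the degree of a one-dimensional cycle on an irreducible one-dimensional base is invariant under Gysin pullback to any point.

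Concretely, I would first let $\pi_Q:\bcM_g(Q,d)\times\Ao \to \Ao$ be the projection. Since $\bcM_g(Q,d)$ is proper, $\pi_Q$ is proper and admits a proper pushforward on Chow groups. Pushing forward the class yields
\begin{equation*}
\pi_{Q\ast}[\cvgp]\virt_{\bar\sigma} \in A_1(\Ao).
\end{equation*}
Because $\Ao$ is an irreducible smooth one-dimensional scheme, $A_1(\Ao)=\ZZ[\Ao]$, so there is a unique integer $k$ with $\pi_{Q\ast}[\cvgp]\virt_{\bar\sigma}=k[\Ao]$. For any closed point $\jmath_c:c\hookrightarrow \Ao$, Gysin pullback is compatible with proper pushforward along the Cartesian square formed by $\pi_Q$ and $\jmath_c$, so
\begin{equation*}
\deg \jmath_c^\ast [\cvgp]\virt_{\bar\sigma}=\deg \jmath_c^\ast\pi_{Q\ast}[\cvgp]\virt_{\bar\sigma}= k \cdot \deg\jmath_c^\ast[\Ao] = k,
\end{equation*}
independent of $c$.

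Applying Proposition \ref{shriek} to the two cases $c\ne 0$ and $c=0$, the left hand side equals $\deg[\cpgp]\virt_{\bar\sigma_1}=N_g(d)^p_\Pf$ and $\deg[\cN]\virt_{\bar\sigma_0}=N_g(d)_{K_{N_Q}}$ respectively, so both equal the common value $k$, proving the theorem.

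Essentially all of the analytic and deformation-theoretic work has been absorbed into the machinery of the preceding sections: constructing the family obstruction theory $\phi_{\cvgp/\fdg}$ (Proposition \ref{familyob}), lifting the relative cosection to an absolute cosection (Corollary \ref{hahahaha}), identifying the degeneracy locus as $\bcM_g(Q,d)\times\Ao$ (Lemma \ref{degWV}), and most crucially establishing the compatibility of the family and fiberwise obstruction theories needed to invoke \cite[Thm 5.3]{KL} (this is the content of Proposition \ref{shriek}, whose proof rests on the compatibility diagram \eqref{KKPdiag}). Once these tools are in hand, the main obstacle is essentially formal; the only point to verify carefully is the base change identity $\jmath_c^\ast\pi_{Q\ast}=(\pi_Q|_c)_\ast \jmath_c^\ast$, which holds because $\jmath_c$ is a regular closed embedding into a smooth base and $\pi_Q$ is flat (indeed a trivial product projection).
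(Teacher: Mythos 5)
Your proposal is correct and is exactly the argument the paper intends: the paper's proof of Theorem \ref{thm5.3} is the one-line ``It follows directly from Proposition \ref{shriek},'' and your pushforward-to-$\Ao$ argument with $A_1(\Ao)=\ZZ[\Ao]$ and compatibility of Gysin pullback with proper pushforward is the standard way to extract equality of degrees from that proposition. No gaps.
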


\begin{proof}
It follows directly from Proposition \ref{shriek}.
\end{proof}

\subsection{Comparing with the GW invariant of Quintics}
We now show that the formal Landau-Ginzburg model gives the same invariants as the Gromov-Witten invariants of $Q$ 
up to signs. 

We first construct a perfect relative obstruction theory of $\cngp\to \cqg$. In the fiber product over $\fdg$
\beq\label{v-v}
\begin{CD}
\cngp@>{\gamma}>> \fC\defeq C(\pi_{\fdg\ast}(\sL_\fdg\of\oplus\sP_\fdg))\\
@VV{v}V@VVV\\
\cqg\defeq \bcM_g(Q,d) @>>>\fdg,
\end{CD}
\eeq
where $C(\pi_{\fdg\ast}(\sL_\fdg\of\oplus\sP_\fdg))$ is the direct image cone constructed in Subsection 2.1,
the morphism $\gamma$ pulls back the relative perfect obstruction theory 
\begin{equation}\label{relll}
 \TT_{{\fC}/\fdg}\lra  \EE_{{\fC}/\fdg} 
\end{equation}
to the morphism
$$\phi_{\cngp/\cqg}: \TT_{\cngp/\cqg}\lra\EE_{\cngp/\cqg}:=\gamma^\ast \EE_{{\fC}/\fdg}.
$$
 
By Proposition \ref{deformation}, $\phi_{\cngp/\cqg}$ is the perfect relative obstruction theory 
associated with the direct image cone stack $\cngp\cong C(\pi_{\cqg\ast}(\sL_\cqg\of\oplus\sP_\cqg))$ relative to $\cqg=\bcM_g(Q,d)$. 
 
We define
\beq\label{fQ}
\fQ=  \Vb(\sL_\fdg\upf)\sta\times_\Pf Q.
\eeq 
The evaluation maps of $\cngp$ and $\cqg$ fit into the diagram
$$
\begin{CD}
\cC_\cngp@>{\be_\cngp}>>\cZ_0@>>>\Vb(\sL_{\fdg}\of)\times_{\fdg}\Vb(\sP_{\fdg})\\
@VV{\upsilon_\cC}V @VVV@VVV\\
\cC_\cqg@>{\be_\cqg}>> \fQ@>>>\cC_\fdg
\end{CD}
$$
where the right square is a fiber product of smooth morphisms, and $\upsilon_\cC$ is induced by 
the vertical arrow $v$ in diagram \eqref{v-v}. 

The diagram associates a morphism between distinguished triangles
$$
\begin{CD}
\be_\cngp^\ast T_{\cZ_0/\fQ}@>>>\be_\cngp^\ast T_{\cZ_0/\cC_\fdg}@>>>\upsilon_\cC^\ast \be_\cqg^\ast T_{\fQ/\cC_\fdg}@>{+1}>>\\
@AAA@AAA@AAA\\
\TT_{\cC_\cngp/\cC_\cqg}@>>>\TT_{\cC_\cngp/\cC_\fdg}@>>> \upsilon_\cC^\ast \TT_{\cC_\cqg/\cC_\fdg}@>{+1}>>\\
\end{CD}
$$  
\black
 
Denoting $\EE_{\cngp/\cqg}:=\pi_{\cngp\ast}\be_\cngp^\ast T_{\cZ_0/\fQ}$, then by the projection formula we have
\beq\label{triang}
\begin{CD}
\EE_{\cngp/\cqg}@>>>\EE_{\cngp/\fdg}@>{h}>>\upsilon^\ast \EE_{\cqg/\fdg}@>{+1}>>.\\
@AA{\phi_{\cngp/\cqg}}A@AA{\phi_{\cngp/\fdg}}A@AA{\upsilon^\ast\phi_{\cqg/\fdg}}A\\
\TT_{\cngp/\cqg}@>>>\TT_{\cngp/\fdg}@>>>\upsilon^\ast \TT_{\cqg/\fdg} @>{+1}>>\\
\end{CD}
\eeq
Composing the cosection $\sigma_0:\Ob_{\cngp/\fdg}\to\sO_{\cngp}$ (cf. \eqref{sig-o}) with $H^1(\EE_{\cngp/\cqg})\to
H^1(\EE_{\cngp/\fdg})$, we obtain
$$\sigma'_0:=\Ob_{\cngp/\cqg}\lra \sO_{\cngp}.
$$
Arguing similar to Proposition \ref{degQ}, one sees that the degeneracy loci of $\sigma'_0$ equals $\cqg\sub\cngp$.

Now let $U:=\cngp-\cqg$; it is open in $\cngp$, and both
$\sigma_0$ and $\sigma_0'$ are surjective on $U$. By the octahedral axiom, we have a diagram 
\beq\label{Oct}
\begin{CD}
\sO_U[-1]@>{=}>>\sO_U[-1]\\
@AAA@AAA\\
\EE_{\cngp/\cqg}|_U@>>>\EE_{\cngp/\fdg}|_U@>>>\upsilon^\ast \EE_{\cqg/\fdg}|_U@>{+1}>>.\\
@AA{\chi_Q}A@AA{\chi}A@AA{||}A\\
 \EE'_{U/\cqg}@>>>\EE'_{U/\fdg}@>>>\upsilon^\ast \EE_{\cqg/\fdg}|_U@>{+1}>>\\
\end{CD}
\eeq
where all rows and columns are distinguished triangles, and the two vertical rows to $\sO_U[-1]$
are induced by $\sigma_0$ and $\sigma_0'$, respectively.

\begin{lemm}\lab{factor}
There are perfect relative obstruction theories $\phi'_{U/\cqg}$ of $U/\cqg$ and $\phi'_{U/\fdg}$ of $U/\fdg$
that fit into a compatible diagram
\beq\label{triangle}
\begin{CD}
\EE'_{U/\cqg}@>{\theta_E}>>\EE'_{U/\fdg}@>{h\circ \chi}>>\upsilon^\ast \EE_{\cqg/\fdg}@>{+1}>>.\\
@AA{\phi'_{U/\cqg}}A@AA{\phi'_{U/\fdg}}A@AA{\upsilon^\ast\phi^{\leq 1}_{\cqg/\fdg}|_U}A\\
\TT^{\leq 1}_{U/\cqg}@>{\theta}>>\TT^{\leq 1}_{U/\fdg}@>>>(\upsilon^\ast \TT_{\cqg/\fdg} )|_U^{\leq 1}@>{+1}>>\\
\end{CD}
\eeq \end{lemm}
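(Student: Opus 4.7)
The plan is to define $\phi'_{U/\cqg}$ and $\phi'_{U/\fdg}$ by lifting the restrictions to $U$ of the existing obstruction theories $\phi_{\cngp/\cqg}$ and $\phi_{\cngp/\fdg}$ through the vertical fiber-sequences in the octahedral diagram \eqref{Oct}, then to verify the Behrend-Fantechi criterion and the diagram compatibility. First, since $U = \cngp - \cqg$ is precisely the locus on which both $\sigma_0$ and $\sigma_0'$ are surjective, the induced morphisms $\EE_{\cngp/\fdg}|_U \to \sO_U[-1]$ and $\EE_{\cngp/\cqg}|_U \to \sO_U[-1]$ are surjective on $H^1$; therefore their fibers $\EE'_{U/\fdg}$ and $\EE'_{U/\cqg}$, already appearing in \eqref{Oct}, remain representable by perfect two-term complexes of locally free sheaves on $U$.

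Next, I would invoke the analogue on $\cN$ of Lemma \ref{family-cone}, which yields $\sigma_0 \circ H^1(\phi_{\cngp/\fdg}) = 0$, together with its counterpart for $\sigma_0'$. Because $\sO_U[-1]$ is concentrated in cohomological degree one, a morphism in the derived category from a complex supported in degrees $[0,1]$ into $\sO_U[-1]$ is faithfully detected by its induced map on $H^1$; hence the $H^1$-level vanishing upgrades to the vanishing of the derived composite
$$\TT^{\leq 1}_{U/\fdg} \xrightarrow{\phi^{\leq 1}_{\cngp/\fdg}|_U} \EE_{\cngp/\fdg}|_U \xrightarrow{\sigma_0} \sO_U[-1],$$
producing a lift $\phi'_{U/\fdg} : \TT^{\leq 1}_{U/\fdg} \to \EE'_{U/\fdg}$. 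The construction of $\phi'_{U/\cqg}$ through the fiber of $\sigma_0'$ is parallel.

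To verify that $\phi'_{U/\fdg}$ is a perfect relative obstruction theory, I would use the long exact sequence of the defining triangle of $\EE'_{U/\fdg}$: it yields $H^0(\EE'_{U/\fdg}) \cong H^0(\EE_{\cngp/\fdg}|_U)$ and $H^1(\EE'_{U/\fdg}) \cong \ker H^1(\sigma_0)$. The isomorphism on $H^0$ and the surjectivity on $H^1$ then transfer from $\phi_{\cngp/\fdg}|_U$ to $\phi'_{U/\fdg}$, once we use that the image of $H^1(\phi_{\cngp/\fdg}|_U)$ lies in $\ker H^1(\sigma_0)$ (again by Lemma \ref{family-cone}). The same reasoning applies to $\phi'_{U/\cqg}$. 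The commutativity of \eqref{triangle}, together with its compatibility with \eqref{Oct}, then follows from the functoriality of the construction: the three lifts all arise by applying the same factorization procedure to the common middle row of \eqref{Oct}.

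The principal obstacle is the passage from the $H^1$-level vanishing supplied by Lemma \ref{family-cone} to a derived-category vanishing actually producing the morphism $\phi'$ (and not merely a compatible family of cohomology maps). The observation that $\sO_U[-1]$ lives in a single degree renders this plausible, but one must verify that the relevant edge map in the Ext spectral sequence is an isomorphism after restricting to the truncation $\TT^{\leq 1}$, and that every arrow in \eqref{Oct} interacts with this truncation in the way needed for the octahedral lifting to yield honest morphisms of complexes. Bookkeeping the various truncations and checking the compatibility of the two lifts with the vertical edge map $\chi$ and $\chi_Q$ in \eqref{Oct} is where the real care is required.
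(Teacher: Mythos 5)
Your construction of the two lifts is essentially the paper's own argument: truncate by $\tau_{\leq 1}$, note that a morphism into $\sO_U[-1]$ from a complex with cohomology in degrees $[0,1]$ is determined by its $H^1$, kill the $H^1$-level composite using the cosection vanishing on the image of $H^1(\TT_{\cN/\fdg})$ (this is the $\cN$-analogue of Lemma \ref{cone}, not of Lemma \ref{family-cone}, which concerns the composite with $\ti\fq\sta\Omega_{\tfdg}\dual$ --- a harmless mis-citation), and lift through the vertical fiber sequences of \eqref{Oct}. The verification that the lifts are perfect obstruction theories via $H^0(\EE'_{U/\fdg})\cong H^0(\EE_{\cN/\fdg}|_U)$ and $H^1(\EE'_{U/\fdg})\hookrightarrow H^1(\EE_{\cN/\fdg}|_U)$ is also how the paper proceeds. (Small slip: the Behrend--Fantechi condition in the tangent-complex formulation is injectivity, not surjectivity, on $H^1$; injectivity is what transfers along the inclusion $H^1(\EE')\sub H^1(\EE)$.)

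The gap is the commutativity of the left square of \eqref{triangle}, which is part of the assertion and does not ``follow from functoriality'': each lift is produced by an existence argument and is a priori ambiguous up to a morphism factoring through $\sO_U[-2]\to\EE'$, so two independently constructed lifts need not be compatible on the nose. The paper closes this explicitly: set $\Delta:=\theta_E\circ\phi'_{U/\cqg}-\phi'_{U/\fdg}\circ\theta$. Then $\chi\circ\Delta=0$, because $\chi\circ\phi'_{U/\fdg}=\phi^{\leq 1}_{\cN/\fdg}|_U$ and $\chi_Q\circ\phi'_{U/\cqg}=\phi^{\leq 1}_{\cN/\cqg}|_U$ by construction, $\chi\circ\theta_E$ factors through $\chi_Q$ by \eqref{Oct}, and the resulting outer composites agree by the compatibility of the untruncated obstruction theories in \eqref{triang}. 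Hence $\Delta$ factors through the fiber $\sO_U[-2]$ of $\chi$, and then $\Delta=0$ because $\tau_{\leq 1}$ is a functor fixing both $\TT^{\leq 1}_{U/\cqg}$ and $\EE'_{U/\fdg}$ (whose cohomologies sit in degrees $\leq 1$) while annihilating $\sO_U[-2]$. This is exactly the point you flagged as ``where the real care is required''; the truncation observation you already used for producing the lifts supplies the missing step, but it must be run on the difference $\Delta$ rather than invoked as functoriality.
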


\begin{proof}
Applying the truncation functor $\tau_{\leq 1}$ to $\phi_{\cngp/\fdg}|_U$, we obtain
$$\phi^{\leq 1}_{\cngp/\fdg}|_U:\TT^{\leq 1}_{\cngp/\fdg}|_U\lra \EE_{\cngp/\fdg}|_U.$$
Then the commutative diagram 
$$
\begin{CD}
\TT^{\leq 1}_{\cngp/\fdg}|_U@>{\phi^{\leq 1}_{\cngp/\fdg}|_U}>> \EE_{\cngp/\fdg}|_U\\
@VVV @VVV\\
H^1(\TT_{\cngp/\fdg}|_U)@>>>\Ob_{\cngp/\fdg}|_U
\end{CD}
$$
implies that the composition of $\phi^{\leq 1}_{\cngp/\fdg}|_U$ with $\EE_{\cngp/\fdg}|_U\to \sO_U[-1]$ vanishes.
Hence $\phi^{\leq 1}_{\cngp/\fdg}|_U=\chi\circ \phi'_{U/\fdg}$ for some
$$\phi_{U/\fdg}:\TT^{\leq 1}_{U/\fdg}\lra \upsilon^\ast \EE_{\cqg/\fdg}.
$$
It is direct to check $\phi_{U/\fdg}$ is a perfect obstruction theory, and the middle square of
the diagram (\ref{triang}) commutes. By similar reason the $\tau_{\leq 1}$ truncation of 
$\phi_{\cngp/\cqg}|_U$,
$$\phi_{\cngp/\cqg}|_U:\TT^{\leq 1}_{U/\cqg}\lra \EE_{\cngp/\cqg}|_U,
$$
has its composition with $\EE_{\cngp/\cqg}|_U\to \sO_{U}[-1]$ vanishes and lifts to a
$$\phi_{U/\cqg}':\TT^{\leq 1}_{U/\cqg}\lra \EE_{U/\fdg}$$
such that 
$\phi_{\cngp/\cqg}|_U=\chi_Q \circ \phi'_{U/\cqg}$.
The map $\Delta:=\theta_E\circ\phi_{U/\cqg}'-\phi'_{U/\fdg}\circ\theta$ in (\ref{triangle})
thus  satisfies 
$\chi\circ\Delta=0$, hence $\Delta$ factors throught a morphism 
$\sO_U[-2]\lra \EE'_{U\fdg}$, which imples $\Delta=0$; this is because when applying the truncation functor $\tau_{\leq 1}$ one 
obtains  $\tau_{\leq 1}(\Delta)=\Delta$ and $\tau_{\leq 1}(\sO_U[-2])=0$.
\end{proof}

We now quote the virtual pull-back construction of Manolache in \cite{Cristina}. 
First the compatibilty diagram (\ref{triang}) fits into the condition two in the construction of Manolache in \cite{Cristina}. 
Let $\bC_{\cngp/\cqg}$ be the intrinsic normal cone of $\cngp$ relative to $\cqg$ and let
$i:\bC_{\cngp/\cqg}\to h^1/h^0(\EE_{\cngp/\cqg})$ be the inclusion by the relative perfect obstruction theory (\ref{relll}). 
We 
let $G'$ be the kernel of the morphism of bundle staks 
$$G'=\ker\{\sigma_0': G:= h^1/h^0(\EE_{\cngp/\cqg})\lra \CC_{\cngp}\},
$$
where the arrow in the bracket is induced by the cosection $\sigma'_0$, 
and the kernel is defined in \eqref{cone-stack}. By the Cosection lemma in \cite{KL} and Lemma \ref{factor}, we have
\beq\label{iG}
i(\bC_{\cngp/\cqg})\sub i(\gamma^\ast \bC_{{\fC}/\fdg})\sub G'.
\eeq
Note that here the virtual rank of the bundle stack $G$ is zero.
 
We generalize the construction in \cite{Cristina} and give a virtual pullback morphism of cosection localized classes
$$i^!_{G'}: A\lsta\cqg\lra A\lsta\cqg
$$
defined as the composite of 
\begin{equation}\lab{com}
A\lsta\cqg\mapright{\zeta}A\lsta\bC_{\cngp/\cqg}\mapright{i\lsta} A\lsta G'
\mapright{0^!_{\sigma_0,\mathrm{loc}}} A\lsta\cQ,
\end{equation}
where $0^!_{\sigma_0,\mathrm{loc}}$ is the localized Gysin map defined in \cite{KL}, and $\zeta$
defined by first sending a cycle
$\sum n_i[V_i]$ to $\sum n_i [\bC_{V_i\times_\cqg \cngp/V_i}]$, and then 
descending it to cycle class group. Note that $i\lsta$ maps to $A\lsta G'$ is due to \eqref{iG}.
 
Following the same argument as in Corollary 4 in \cite{Cristina}, we have
\begin{lemm}\lab{pullback}
$$i^!_{G'}([\cqg]^{vir})=[\barM_g(\nq,d)^p]\virt_{\sigma_0}\in A_0 \cQ.
$$
\end{lemm}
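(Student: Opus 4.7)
The plan is to adapt Manolache's virtual pullback theorem \cite{Cristina} to the cosection-localized setting. The decisive input, namely the compatibility of the perfect obstruction theories for the triple $\cngp \to \cqg \to \fdg$, is already assembled in Lemma \ref{factor} as diagram \eqref{triangle} (over the open complement $U = \cngp - \cqg$ of the degeneracy loci, which is exactly where the cosections $\sigma_0$ and $\sigma_0'$ are nonzero). Following Manolache, the distinguished triangle $\EE_{\cngp/\cqg}\to\EE_{\cngp/\fdg}\to v^*\EE_{\cqg/\fdg}$ gives a short exact sequence of bundle stacks, which yields a presentation of $\bC_{\cngp/\fdg}$ as a cone fibered over $v^*\bC_{\cqg/\fdg}$ with fiber $\bC_{\cngp/\cqg}$, in the sense of \cite{Cristina}, Proposition 3 (or Kresch \cite{Kresch2}).

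First I would verify the bundle-stack input: the cone-stack exact sequence produces a commutative diagram of bundle stacks with compatible intrinsic normal cone inclusions, and the embedding $\bC_{\cngp/\cqg} \hookrightarrow G'$ from \eqref{iG} is precisely the restriction to the cosection kernel. Second, I would quote the non-localized computation of \cite{Cristina}, Corollary 4: in the absence of cosections, the composite $i_* \circ \zeta$ applied to $[\cqg]^{vir}$ yields the pushforward of $[\bC_{\cngp/\fdg}]$ along the natural projection in $h^1/h^0(\EE_{\cngp/\fdg})$. This is the place where the fact that $G$ has virtual rank zero is used, so that no intermediate ordinary Gysin factor intervenes. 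Third, I would upgrade this identity to the cosection-localized setting. The cosection $\sigma_0'$ is by construction the composite of $\sigma_0$ with $\Ob_{\cngp/\cqg}\to\Ob_{\cngp/\fdg}$, hence the kernel bundle stacks are compatible under the morphisms in play. Applying the functoriality of Kiem--Li's localized Gysin map under bundle-stack maps respecting cosections (\cite{KL}, \S5), together with the identity from step two, gives the desired equality in $A_0 \cQ$.

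The main obstacle will be step three: Kiem--Li's localized Gysin map was originally formulated cycle-by-cycle rather than as a functorial operation under morphisms of bundle stacks equipped with compatible cosections, so some of this functoriality needs to be verified directly. Concretely, one must check that $0^!_{\sigma_0,\mathrm{loc}}([\bC_{\cngp/\fdg}])$ can be computed by first descending $[\bC_{\cngp/\fdg}]$ along the bundle-stack exact sequence to the cycle $[\bC_{\cngp/\cqg}]$ inside $G'$ and then applying $0^!_{\sigma_0,\mathrm{loc}}$ at the level of $G'$. Once this compatibility is in place, everything else is a transcription of \cite{Cristina}, Corollary 4, restricted to the locus where the cosection is nonzero and extended by proper pushforward from the degeneracy locus $\cqg \subset \cngp$ in the usual Kiem--Li fashion.
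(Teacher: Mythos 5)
Your proposal follows the same route as the paper: Manolache's functoriality argument for virtual pullbacks, driven by the compatibility diagram of Lemma \ref{factor}, and then upgraded to the cosection-localized setting. The obstacle you flag in step three is exactly what the paper's (very short) proof addresses: the Kim--Kresch--Pantev rational equivalence connecting $[\bC_{\cngp/\fdg}]$ to the doubly-deformed cone is shown to lie inside the kernel cone-stack of the cosection --- via Lemma \ref{weakKKP} and the inclusions \eqref{inclusion}--\eqref{inc-2} of the appendix --- which is the precise form of the ``functoriality of the localized Gysin map'' you would need to verify.
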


\begin{proof}
One needs to show that the KKP's deformation to normal cone lies inside the kernel of the cosection, 
which follows from the Lemma \ref{weakKKP} in Appendix and Lemma \ref{factor}.
\end{proof}
 
We now prove our main Theorem.

\begin{theo}\label{formal}
We have 
$$N_g(d)_\Pf^p=N_g(d)_{K_{N_Q}}=(-1)^{5d+1-g}\cdot N_g(d)_Q.
$$
\end{theo}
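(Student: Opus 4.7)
The first equality $N_g(d)^p_\Pf=N_g(d)_{K_{N_Q}}$ is Theorem \ref{thm5.3}, obtained via the deformation $\cV\to\Ao$ and the constancy result of Proposition \ref{shriek}. It therefore suffices to prove the second equality $N_g(d)_{K_{N_Q}}=(-1)^{5d+1-g}N_g(d)_Q$. By Lemma \ref{pullback}, this reduces to establishing
$$i^!_{G'}\bl[\cqg]\virt\br=(-1)^{5d+1-g}[\cqg]\virt \in A_0\cqg.$$

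The plan is to unravel the direct image cone structure of $\cN\to\cqg$. Over $[u,C,L]\in\cqg$ the fiber of $\cN$ is $H^0(L\of)\oplus H^0(L\umtf\otimes\omega_C)$, and the relative obstruction sheaf of $\cN/\cqg$ is $H^1(L\of)\oplus H^1(L\umtf\otimes\omega_C)$. Serre duality identifies $H^1(L\umtf\otimes\omega_C)\cong H^0(L\of)\dual$ and $H^1(L\of)\cong H^0(L\umtf\otimes\omega_C)\dual$; under these identifications, $\cN$ is the direct image cone of a self-dual pair of complexes, and the cosection $\sigma_0'$ appearing before Lemma \ref{factor} is the natural evaluation pairing with the tautological $(b,p)$-section of the cone. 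The degeneracy locus is precisely the zero section $\cqg=\{b=0,p=0\}$.

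The sign is then obtained from Kiem--Li's localized Euler class applied to this linear model. Locally on $\cqg$, represent $\sA\defeq R\pi_{\cqg\ast}\sL_\cqg\of$ by a two-term complex $[A_0\to A_1]$ of locally free sheaves; by Serre duality $R\pi_{\cqg\ast}(\sL_\cqg\umtf\otimes\omega)$ is then represented by $[A_1\dual\to A_0\dual]$. In this chart $\cN$ embeds in $\Vb(A_0\oplus A_1\dual)$, its relative obstruction bundle is $A_1\oplus A_0\dual$, and $\sigma_0'$ is identified with contraction against the tautological section of $A_0\oplus A_1\dual$. After cancelling the matching ranks through the cosection-localized Gysin of \cite{KL}, one is left with the sign $(-1)^{\mathrm{rk}A_0-\mathrm{rk}A_1}=(-1)^{\chi(L\of)}=(-1)^{5d+1-g}$ acting as multiplication on the zero section $\cqg$, as desired.

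The main obstacle is to justify passing from the global non-linear setting (where the cohomologies of $\sA$ have jumping ranks along $\cqg$) to the linear local model while keeping track of the cosection throughout. This requires a cosection-compatible deformation to the normal cone---essentially the ``weak KKP'' statement alluded to in the proof of Lemma \ref{pullback}---ensuring that the specialization of the intrinsic normal cone stays inside the kernel of $\sigma_0'$. Once the sign computation is established, taking degrees yields $N_g(d)_{K_{N_Q}}=(-1)^{5d+1-g}N_g(d)_Q$, which combined with Theorem \ref{thm5.3} completes the proof.
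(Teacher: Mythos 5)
Your proposal is correct and follows essentially the same route as the paper: reduce via Theorem \ref{thm5.3} and Lemma \ref{pullback} to computing $i^!_{G'}$ on $[\cqg]\virt$, identify the cosection on the linear model $\cN\to\cqg$ with the Serre-duality pairing, and extract the sign $(-1)^{5d+1-g}$ from the Kiem--Li localized Gysin map applied to a self-dual bundle with its tautological pairing cosection. The only notable difference is that the paper performs the final sign computation pointwise---evaluating $i^!_{G'}([\xi])$ at a closed point $\xi\in\cqg$, where $\upsilon^{-1}(\xi)$ is literally the vector space $V=V_1\oplus V_1\dual$, the bundle stack is $[V\times V\dual/V]$, and \cite[Example 2.9]{KL} applies directly---which sidesteps the jumping-rank/local-resolution issue you flag as the main obstacle (that issue is already absorbed into Lemma \ref{pullback}).
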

   
\begin{proof}
We first compute the degree of the zero cycle $i^!_{G'}([\xi])\in A_0\cQ$, where $\xi$ is any closed point in $\cQ$. 
Let $\xi$ be $[u,C]\in \cQ$. We denote 
$$V_1=H^0(C,u^\ast \sO(5)), \ V_2=H^0(C,u^\ast\sO(-5)\otimes \omega_{C})\cong V_1\dual, \ \ V=V_1\oplus V_2.
$$
It is direct to check ($v$ is defined in diagram \eqref{v-v}) 
$$\upsilon^{-1}\xi:=\cngp\times_\cqg \xi\cong V, \quad G|_{\upsilon^{-1}\xi}\cong [V\times V\dual/V],
$$
where the action of $V$ on $V\times V\dual$ is via the zero homomorphism $0:V\to V\times V\dual$.
One also checks that the cosection $\sigma_0$ restricted to $\upsilon^{-1}\xi$ is   
induced by $$\sigma_\xi:V\times V\dual = (V_1\oplus V_2)\times (V_1\dual\oplus V_2\dual)\lra \CC,
$$
given by dual parings $V_i\times V_i\dual\to \CC$. 
 
Applying the composition (\ref{com}) step by step, from 
$$\zeta ([\xi])=[\bC_{V/\xi}]\in A\lsta(G'|_\xi),
$$
we have
$$i^!_{G'}([\xi])=0^!_{\sigma_\xi,\mathrm{loc}}(\bC_{V/\xi})=(-1)^{\rank V}[\xi]=(-1)^{5d+1-g}[\xi]\in A_0\cQ.
$$
Here the second equality follows from 
$$\bC_{V/\xi}=[V\times 0/V]\sub [V\times V\dual/V]=G|_{v^{-1}\xi},
$$
and \cite[Example 2.9]{KL}. Finally $\rank V=5d+1-g$ by Riemann-Roch theorem. 

Taking degree,
$$\deg \ i^!_{G'}([\xi])=(-1)^{5d+1-g}.
$$
Since both $[\cqg]^{vir}$ and $[\barM_g(\nq,d)^P]\virt_{\sigma_0}$ in Lemma 
\ref{pullback} are of zero dimensions, 
taking degrees we obtain
$$\deg\ [\barM_g(\nq,d)^P]\virt_{\sigma_0} = \deg\ i^!_{G'}([\xi])\cdot \deg\ [\cqg]^{vir}=(-1)^{5d+1-g} N_g(d)_Q.
$$
This proves the second identity in the statement of the theorem. The first identity is Theorem \ref{thm5.3}.
 \end{proof}

\section{appendix}

\def\boldY{{\mathbf Y}}
\def\boldX{{\mathbf X}}
\def\boldE{{\mathbf E}}
\def\boldF{{\mathbf F}}
\def\fk{{\mathfrak k}}
\def\ufl{^{\text{flat}}}
\def\ellip{\text{ell}}
\def\gst{\text{gst}}
\def\lred{_{\text{red}}}
\def\cA{{\mathcal A}}

We recall some useful facts known to the experts.

\subsection{Kresch-Kim-Pantev's construction}
Let $S$ be a stack.

\noindent
{\bf Convention}. {\sl For a complex (derived object) $\GG$ on $S$, we denote $\GG(k)$ without further commenting to be $$\GG(k)\defeq p_S\sta \GG\otimes p_\Po\sta \sO(k);
$$
further, whenever we see a complex over $S$ appearing in a sequence involving complexes over $S\times\Po$,
we understand the complex as its pull-back from $S$.}
\vsp

\begin{defi}\lab{mapping cone}
Let
$\EE_1\mapright{b} \EE_2\lra \EE_3\mapright{+1}$
be a distinguished triangle of objects in $\bD(S)$ whose cohomologies concentrated at non-positive degrees. Assume $\EE_1$ is of amplitude in $[-1,\infty]$.
Let $[x,y]$ be the homogeneous coordinates of $\Po$, and let 
$$\bar{b}: \EE_1(-1)\to \EE_1\oplus \EE_2
$$ 
be defined by
$(x\cdot 1, y\cdot b)$. We form the mapping cone $c(\ti b)$ of $\ti b$, which fits into the distinguished triangle
$$\EE_1(-1)\mapright{\bar{b}} \EE_1\oplus \EE_2\lra c(\ti b)\mapright{+1}.
$$
Applying the $h^1/h^0$ construction to $c(\bar b)\dual$, we obtain $h^1/h^0(c(\bar b)\dual)$,
which is a cone-stack over $S\times\Po$  \cite{BF}.
Following \cite{Kresch2} we call it the deformation
of $h^1/h^0(\EE_2\dual)$ to $h^1/h^0(\EE_1\dual)\times_S h^1/h^0(\EE_3\dual)$.
\end{defi}

Let $i:X\to Y$ and $j:Y\to Z$ be morphims of relative Deligne-Mumford type, between stacks. 
Let 
\beq\label{tri-app}
i^\ast\LL_{Y/Z}\mapright{\beta}\LL_{X/Z}\lra \LL_{X/Y} \mapright{+1}
\eeq
 be the
induced distinguished triangle of cotangent complexes. 
We quote the main theorem of \cite{Kresch2}. 

\begin{prop}\cite{Kresch2}\label{prop6.2}
We have a natural isomorphism
$$N_{X\times\Po/M^\circ_{Y/Z}}\cong h^1/h^0(c(\ti \beta)\dual).
$$
\end{prop}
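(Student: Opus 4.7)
The plan is to reduce the identification of cone-stacks to an identification of their defining derived objects. By the $h^1/h^0$ formalism of Behrend--Fantechi, one has a tautological isomorphism
$$N_{X\times\Po/M^\circ_{Y/Z}} \cong h^1/h^0\bigl(\LL_{X\times\Po/M^\circ_{Y/Z}}\dual\bigr),$$
so the proposition reduces to exhibiting a natural quasi-isomorphism $\LL_{X\times\Po/M^\circ_{Y/Z}}\simeq c(\bar\beta)$ fitting compatibly with the distinguished triangle \eqref{tri-app} from which $\bar\beta$ is built (where $\bar b$ applied to \eqref{tri-app} with $b=\beta$ becomes $\bar\beta\colon i^\ast\LL_{Y/Z}(-1)\to i^\ast\LL_{Y/Z}\oplus\LL_{X/Z}$, $(x,y\beta)$).

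First I would recall the construction of $M^\circ_{Y/Z}$ as the complement of the proper transform of $Y\times\{0\}$ inside $\mathrm{Bl}_{X\times\{0\}}(Y\times\Po)$, so that the closed immersion $i_X\colon X\times\Po\hookrightarrow M^\circ_{Y/Z}$ restricts to the trivial inclusion $X\times(\Po\setminus 0)\hookrightarrow Y\times(\Po\setminus 0)$ away from the central fiber and to the inclusion of $X$ into its normal cone $C_{X/Y}$ at $\{0\}\in\Po$. Using $i_X$ together with the natural projection $M^\circ_{Y/Z}\to Z\times\Po$ (factoring through $Y\times\Po$) and the induced distinguished triangle of cotangent complexes associated to the composite $X\times\Po\to M^\circ_{Y/Z}\to Z\times\Po$, I would reduce the computation of $\LL_{X\times\Po/M^\circ_{Y/Z}}$ to computing the relative cotangent complex of the ``twisted'' embedding.

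Next, passing to an \'etale-local presentation in which $X\hookrightarrow Y$ is cut out by a regular sequence $(f_1,\ldots,f_r)$, one identifies $M^\circ_{Y/Z}$ locally as $\spec\bigl(\sO_{Y\times\Po}[u_1,\ldots,u_r]/(xu_i-yf_i)\bigr)$ relative to $\Po=\Proj\CC[x,y]$. A direct computation from this explicit presentation shows that $\LL_{X\times\Po/M^\circ_{Y/Z}}$ is locally represented by the two-term complex $i_X^\ast\LL_{Y/Z}(-1)\xrightarrow{(x,\,y\beta)} i_X^\ast\LL_{Y/Z}\oplus \LL_{X/Z}$, which is precisely the mapping cone $c(\bar\beta)$ of Definition \ref{mapping cone}. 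Globalizing by \'etale descent (the construction is manifestly local on $Y$) yields the desired natural quasi-isomorphism, and dualizing and applying $h^1/h^0$ gives the claimed isomorphism of cone-stacks.

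The main obstacle will be the homotopy-coherent matching of the connecting map $\bar\beta$ with the map $(x,y\beta)$ produced intrinsically by the geometry of the deformation: one must carefully track how the $\sO(-1)$-twist on $i_X^\ast\LL_{Y/Z}$ arises from the rescaling of the ideal of $X$ inside $M^\circ_{Y/Z}$, and verify that the two components of $\bar\beta$ correspond respectively to the trivialization over $\Po\setminus\{0\}$ and to the specialization to the normal cone at $\{0\}$. Once this local matching is pinned down and checked to be functorial in the triple $(X,Y,Z)$, the remainder of the argument is formal, and naturality of the resulting isomorphism follows automatically from the naturality of the $h^1/h^0$ construction and of the cotangent complex.
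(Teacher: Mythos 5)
First, a point of order: the paper does not prove this proposition at all --- it is quoted verbatim from Kim--Kresch--Pantev \cite{Kresch2} (``We quote the main theorem of...''), so there is no internal argument to compare against, and any self-contained proof you offer must carry the full weight of their theorem. Your opening reduction --- identifying $N_{X\times\Po/M^\circ_{Y/Z}}$ with $h^1/h^0$ of the dual of (a truncation of) $\LL_{X\times\Po/M^\circ_{Y/Z}}$ and then matching that complex with $c(\ti\beta)$ --- is the right first move. But the geometric core of your argument contains two genuine errors. (1) You have the wrong deformation space: you describe $M^\circ_{Y/Z}$ as the complement of a proper transform inside $\mathrm{Bl}_{X\times\{0\}}(Y\times\Po)$, which is the deformation of $Y$ to the normal cone $\bC_{X/Y}$, i.e.\ $M^\circ_{X/Y}$. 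The space appearing in the proposition is the deformation of $Z$ to $\bC_{Y/Z}$: one blows up $Z\times\Po$ along $Y\times\{0\}$ and removes the proper transform of $Z\times\{0\}$. Its central fiber is $\bC_{Y/Z}$, into which $X$ maps through the zero section via $X\to Y$; it is not ``the inclusion of $X$ into $\bC_{X/Y}$.'' Consequently your local model $\spec\bigl(\sO_{Y\times\Po}[u_1,\ldots,u_r]/(xu_i-yf_i)\bigr)$ presents the wrong object, and the two components of $\ti\beta$ do not admit the interpretation you assign them.

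(2) Even after correcting which morphism is being deformed, neither $i\colon X\to Y$ nor $j\colon Y\to Z$ is a closed immersion, let alone a regular one: they are only of relative Deligne--Mumford type between Artin stacks, and in the very application of this paper $X\to Y$ is the affine cone projection $\cngp\to\cqg$ while $Z=\fdg$ is a smooth Artin stack. A regular-sequence presentation therefore does not exist even \'etale-locally, and the argument must instead factor local charts through smooth ambient stacks (or use the double deformation space of \cite{Kresch2}) and then verify independence of all choices --- this is precisely the nontrivial content of the cited theorem, which your sketch elides. Finally, the asserted global quasi-isomorphism $\LL_{X\times\Po/M^\circ_{Y/Z}}\simeq c(\ti\beta)$ is stronger than what is true or needed: these complexes need not agree outside the amplitude range $[-1,0]$, and only the $h^1/h^0$ of the truncations is being claimed. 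As it stands, the proposal would not compile into a proof without replacing its central computation.
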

 
  Now we stated a truncated version which is dual to Definition \ref{mapping cone}.
  
\begin{lemm}\label{weakKKP}
Let
$$\TT^{\leq 1}_{X/Y}\lra \TT^{\leq 1}_{X/Z}\mapright{k} i^\ast \TT^{\leq 1}_{Y/Z}
$$
be the truncation by $\tau_{\leq 1}$ of the dual of the distinguished triangle \eqref{tri-app} .
(It is not a distinguished triangle.)
Let $c_0(\ti k)$ be defined by making 
$$c_0(\ti k)\lra  i^\ast \TT^{\leq 1}_{Y/Z}\oplus  \TT^{\leq 1}_{X/Z} \mapright{\ti k} i^\ast \TT^{\leq 1}_{Y/Z}\otimes \sO_\Po(1)
$$
a distinguished triangle, where $\ti k=(x,y\cdot k)$ as in Definition \ref{mapping cone}.
Then there is a natural isomorphism
$$h^1/h^0(c(\ti \beta)\dual)\cong h^1/h^0(c_0(\ti k)).
$$
\end{lemm}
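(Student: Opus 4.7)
The plan is to construct a natural comparison map $c_0(\tilde k) \to c(\tilde\beta)^\vee$ using the truncation morphisms $\TT^{\leq 1}_F \to \TT_F$, and to verify that it induces an isomorphism on the bundle stack $h^1/h^0$ via a cohomology comparison. The central principle is the following: for any complex $\GG$ of appropriate amplitude, the bundle stack $h^1/h^0(\GG)$ depends only on the cohomology sheaves $h^0(\GG)$ and $h^1(\GG)$, so the assignment $\GG \mapsto h^1/h^0(\GG)$ factors canonically through the truncation $\tau_{\leq 1}$.

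First I would construct the comparison morphism. Since $\tau_{\leq 1}$ is functorial, the morphism $k: \TT_{X/Z} \to i^\ast \TT_{Y/Z}$ restricts to a morphism $\TT^{\leq 1}_{X/Z} \to i^\ast \TT^{\leq 1}_{Y/Z}$ compatible with the natural maps $\TT^{\leq 1}_F \to \TT_F$. Consequently the diagram over $S \times \PP^1$
\[
\begin{CD}
i^\ast \TT^{\leq 1}_{Y/Z} \oplus \TT^{\leq 1}_{X/Z} @>{(x,\, y\cdot k)}>> i^\ast \TT^{\leq 1}_{Y/Z}(1) \\
@VVV @VVV \\
i^\ast \TT_{Y/Z} \oplus \TT_{X/Z} @>{(x,\, y\cdot k)}>> i^\ast \TT_{Y/Z}(1)
\end{CD}
\]
commutes, and passing to mapping cones yields a natural morphism $c_0(\tilde k) \to c(\tilde\beta)^\vee$ fitting into a morphism of distinguished triangles.

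Second, I would compare the long exact sequences in cohomology of the two defining triangles. The long exact sequence for the upper triangle has the form
\[
\cdots \to h^{i-1}\bigl(i^\ast \TT^{\leq 1}_{Y/Z}(1)\bigr) \to h^i(c_0(\tilde k)) \to h^i\bigl(i^\ast \TT^{\leq 1}_{Y/Z} \oplus \TT^{\leq 1}_{X/Z}\bigr) \to h^i\bigl(i^\ast \TT^{\leq 1}_{Y/Z}(1)\bigr) \to \cdots ,
\]
with an analogous sequence for $c(\tilde\beta)^\vee$. Since truncation induces canonical isomorphisms $h^i(\TT^{\leq 1}_F) \cong h^i(\TT_F)$ for $i \leq 1$, and since tensoring with $\sO_{\PP^1}(1)$ is flat, the vertical maps match the four outer terms of the two sequences in each of the degrees $i = 0, 1$. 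A five-lemma chase then yields isomorphisms $h^i(c_0(\tilde k)) \cong h^i(c(\tilde\beta)^\vee)$ for $i = 0, 1$. Combined with the first principle, this produces the claimed natural isomorphism $h^1/h^0(c_0(\tilde k)) \cong h^1/h^0(c(\tilde\beta)^\vee)$.

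The main obstacle is to make the comparison morphism well-defined at the level of distinguished triangles rather than merely up to homotopy, so that the induced isomorphism of bundle stacks is natural enough to be usable in the subsequent virtual-pullback argument of Lemma \ref{pullback}. This is essentially formal once one observes that the mapping-cone construction of Definition \ref{mapping cone} is functorial in its input morphism $b$ (here taken to be $k$) and that both triangles are built from the same explicit formula; nonetheless some care with the derived categorical formalism is needed to ensure the comparison is canonical.
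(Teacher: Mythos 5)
Your proof is correct and follows essentially the same route as the paper's: construct the canonical comparison $c_0(\tilde k)\to c(\tilde\beta)^\vee$ induced by the truncation maps, check that it is an isomorphism on $h^0$ and $h^1$, and conclude that the $h^1/h^0$ cone stacks agree. The paper resolves your noted "main obstacle" (well-definedness of the cone comparison) by representing $i^\ast\LL_{Y/Z}$, $\LL_{X/Z}$ and $\beta$ by actual complexes via Illusie's simplicial resolutions, which is the same fix you gesture at.
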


\begin{proof}
Using simplicial resolution of Illusied, we can represent $\i^\ast\LL_{Y/Z}$ and $\LL_{X/Z}$ by perfect complex (over $X$ globally) of amplitude $[-\infty,0]$ and represent $\beta:i^\ast\LL_{Y/Z}\to\LL_{X/Z}$ by
a homomorphism of between these two complexes. 
From this it is direct to show that the canonical morphism
\beq\label{cccc}c_0(\ti k)\lra c(\ti \beta)\dual 
\eeq
induces isomorphisms on $H^1$ and $H^0$ of the two complexes in \eqref{cccc}. Hence their truncations by 
$\tau_{\leq 1}$ are isomorphic under this arrow, which shows that
the cone-stacks of the $h^1/h^0$ constructions of the two complexes in \eqref{cccc} 
are isomorphic under the arrow induced by \eqref{cccc}.
\end{proof}

\subsection{Application}

We recall the rational equivalence inside the deformations of ambient conestacks  constructed by
Kim-Kresch-Pantev \cite{Kresch2}.

Let ${Z}$ be an Artin stack, locally of finite type and of pure-dimension. Let ${Y}$ be a stack and ${Y}\to {{Z}}$ be a morphism of relative Deligne-Mumford type.in the derived category of coherent sheaves on ${X}$.
Let $\EE\dual$ (resp. $\FF\dual$, $\VV\dual$) be a perfect relative obstruction theory of ${X}/{Z}$ (resp. ${Y}/{Z}$, $X/Y$).
\begin{defi}
We say $\FF$ and $\EE$ are \textit{truncated-compatible} (verses $(V,s)$)
if there exists a commutative diagram
\beq\lab{compatibility}
\begin{CD}
 \VV@>>> \EE@>g>>\FF|_X@>>+1>\\ 
@AAA@AAA@AAA  \\
 \TT_{X/Y}^{\leq 1}@>>> \TT^{\leq 1}_{{X}/{Z}} @>{k}>> \TT_{{Y}/Z}|_X^{\leq -1} 
\end{CD}
\eeq
such that its top row is a distinguished triangle, and its bottom row is
the first line in Lemma \ref{weakKKP}. 
\end{defi}

Accordingly, the morphisms $g$ and $k$ in \eqref{compatibility} induces homomorphism $\ti g$ and $\ti k$
that fit into a homomorphism of distinguished triangle's
$$
\begin{CD}
c_0(\ti g)@>>> \FF|_{{X}}\oplus \EE @>\ti g>>\FF|_X(1) @>>+1>\\
@AAA@AAA@AAA \\
c_0(\ti k)@>>> \TT^{\leq 1}_{{Y}/{{Z}}}|_{{X}}\oplus \TT^{\leq 1}_{{X}/{{Z}}} @>\ti k>>\TT^{\leq 1}_{Y/Z}|_X\otimes \sO_\Po(1)
 @>>+1> ,
\end{CD}
$$
where $c_0(\ti g)$ is to make the first row a distinguished triangle as $c_0(\ti k)$ did in Lemma \ref{weakKKP}.
We let $M^0_{{Y}/Z}$ be the deformation of $Z$ to the normal cone $\bC_{Y/Z}$; let
$\bC_{{X}\times \Po /M^0_{{Y}/Z}}$ be the normal cone to $X\times \Po$ in $M^0_{{Y}/Z}$,
and let $N_{{X}\times \Po /M^0_{{Y}/{{Z}}}}$ be the normal sheaf of ${X}\times \Po$ in $M^0_{{Y}/{{Z}}}$.
By the functoriality of the $h^1/h^0$ construction, we have 
\begin{equation}\lab{inclusion}
\cD:=C_{{X}\times \Po /M^0_{{Y}/Z}}
\sub  N_{{X}\times \Po /M^0_{{Y}/{{Z}}}}\cong h^1/h^0(c_0(\ti k)),
\eeq
where the isomorphism is proved in \cite{Kresch2} and Lemma \ref{weakKKP}.  We also have the inclusion
\beq\lab{inc-2}h^1/h^0(c_0(\ti k)) \sub h^1/h^0(c_0(\ti g))\cong h^1/h^0(c(\overline{g\dual})\dual),
\eeq
where $h^1/h^0(c(\overline{g\dual})\dual)$ is the deformation of 
$h^1/h^0(\EE)$ to $h^1/h^0(\FF|_X)\times_X h^1/h^0(\VV)$ as in
Definition \ref{mapping cone}. This shows that the truncated compatibility (\ref{compatibility}) is sufficient to apply Kresch-Kim-Pantev construction of rational equivalence.

\subsection{Obstruction class assignments}

Assume there is a smooth morphism of Artin stacks $H\to W$. Suppose $T\sub T'$ is a pair of affine schemes 
such that $J:=I_{T/T'}$ and $J^2=0$. Fix a morphism $T'\to \fdg$, which pulls back $\pi_\fdg:\cC_\fdg\to \fdg$ to
$\pi_T:\cC_T\to T$ and $\pi_{T'}:\cC_{T'}\to T'$. Assume there is a commutative diagram
\beq\label{lift-general}
\begin{CD}
\cC_T@>{\ee}>>H \\
@VVV@VVV\\
\cC_{T'}@>>>W.
\end{CD}
\eeq

Since the ideal sheaf of $\cC_T\sub \cC_{T'}$ is $\pi_{T'}^\ast J$, it is a square zero extension. 
We denote $V_T:=\ee^\ast\Omega_{H/W}\dual$ then $V_T$ is a locally free sheaf over $\cC_T$.
The diagram (\ref{lift-general}) provides a morphism
\beq\label{ob11}
V_T\dual\cong \ee^\ast\LL^{}_{H/W}\lra \LL^{}_{\cC_T/\cC_{T'}} = \pi_T^\ast\LL^{}_{T/T'}
\lra \LL_{\cC_T/\cC_{T'}}^{\geq -1}=   \pi_T^\ast J[1],
\eeq
(here $\ee^\ast$ denotes derived pull back)
which defines an element 
$$\omega(\ee,H,W)\in \Ext^1_{\cC_T}(V_T\dual, \pi_T^\ast J)\cong H^1(\cC_T,V_T\otimes \pi_T^\ast J).
$$ 

\begin{lemm}\label{ob-general}
$\omega(\ee,H,W)=0$ if and only if the diagram (\ref{lift-general}) admits a lifting $\cC_{T'}\to H$ that commutes with the diagram.
\end{lemm}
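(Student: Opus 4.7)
The plan is to identify $\omega(\ee,H,W)$ with Illusie's classical obstruction class for lifting a morphism along a square-zero thickening, and then invoke the standard result that such a lift exists if and only if this obstruction vanishes.

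First, since $H\to W$ is smooth, the cotangent complex $\LL_{H/W}$ is quasi-isomorphic to the locally free sheaf $\Omega_{H/W}$ placed in degree zero. Hence $V_T\dual=\ee^\ast\Omega_{H/W}$ is locally free on $\cC_T$, and
\[
\Ext^1_{\cC_T}(V_T\dual,\pi_T^\ast J)\cong H^1(\cC_T,V_T\otimes \pi_T^\ast J),
\]
so $\omega(\ee,H,W)$ genuinely lives in the cohomology group stated in the Lemma.

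Next, I would verify that the closed immersion $\cC_T\subset \cC_{T'}$ is a square-zero thickening of flat families with ideal $\pi_{T'}^\ast J$; this follows from the flatness of $\pi_{T'}:\cC_{T'}\to T'$ together with $J^2=0$. For such a thickening, the truncation $\LL_{\cC_T/\cC_{T'}}^{\geq -1}$ is canonically identified with $\pi_T^\ast J[1]$, and Illusie's theory of deformations of morphisms (cf.\ \cite[III.2.1.7]{Illusie}) asserts that the obstruction to extending a morphism $\ee:\cC_T\to H$ over $W$ to a morphism $\cC_{T'}\to H$ over $W$ is precisely the class in $\Ext^1_{\cC_T}(\ee^\ast\LL_{H/W},\pi_T^\ast J)$ represented by the composite
\[
\ee^\ast\LL_{H/W}\lra \LL_{\cC_T/\cC_{T'}}\lra \LL_{\cC_T/\cC_{T'}}^{\geq -1}=\pi_T^\ast J[1].
\]
This is literally the composite in (\ref{ob11}) that defines $\omega(\ee,H,W)$, once one uses Serre duality between $\Ext^1(V_T\dual,\pi_T^\ast J)$ and $H^1(\cC_T,V_T\otimes\pi_T^\ast J)$ provided by the local freeness of $V_T\dual$.

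Finally, Illusie's result directly produces both implications: when $\omega(\ee,H,W)=0$ a lift $\cC_{T'}\to H$ making (\ref{lift-general}) commute exists, and any such lift exhibits the composite above as null-homotopic by functoriality of $\LL$, forcing $\omega(\ee,H,W)=0$. (Moreover the set of lifts, when nonempty, is a torsor under $\Hom_{\cC_T}(V_T\dual,\pi_T^\ast J)=H^0(\cC_T,V_T\otimes \pi_T^\ast J)$, which is not needed here but is the companion to the stated criterion.) The only nontrivial point is matching (\ref{ob11}) with Illusie's canonical obstruction class, which is a routine diagram chase using functoriality of the cotangent complex and the explicit description of $\LL_{\cC_T/\cC_{T'}}^{\geq -1}$ for a square-zero extension; this is the main (and essentially only) step that deserves care.
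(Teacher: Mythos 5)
Your approach is the same as the paper's: reduce to Illusie's theorem on lifting morphisms along square-zero extensions and identify the class $\omega(\ee,H,W)$ of \eqref{ob11} with Illusie's canonical obstruction. You correctly isolate the one nontrivial point — matching \eqref{ob11} with Illusie's class — but you defer it as "a routine diagram chase," whereas this identification is in fact the entire content of the paper's proof: the authors set up the auxiliary square-zero extension $Y_0=H\times_W\cC_T\hookrightarrow Y=H\times_W\cC_{T'}$ over $S=\cC_{T'}$, extract Illusie's class $\omega(\bar\ee,\jj)$ from the sequence \eqref{seqq}, and then verify square by square the commutativity of the diagram \eqref{GGG} comparing \eqref{ob11} with \eqref{seqq}; so your proposal is acceptable in outline but omits the substance. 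One small correction: the isomorphism $\Ext^1_{\cC_T}(V_T\dual,\pi_T^\ast J)\cong H^1(\cC_T,V_T\otimes\pi_T^\ast J)$ is not Serre duality — it is just local-to-global Ext for the locally free sheaf $V_T\dual$.
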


\begin{proof}
We form the diagram
\beq\label{Illusie}
\begin{CD}
X_0:= \cC_T@>{\ii}>> X:=\cC_{T'}\\
@VV{\bar\ee}V\\
Y_0:=H\times_{W} \cC_T @>{\jj}>> Y:=H\times_{W} \cC_{T'}\\
@VV{\Delta}V @VVV\\
\cC_T @>{\sub}>>  S:=\cC_{T'}
\end{CD}
\eeq
where $\ii$ and $\jj$ are extensions over $S$. By construction, the associated homomorphism of sheaves
$$v:\bar\ee^\ast I_{Y_0/Y}\lra I_{X_0/X}=\pi_T^\ast J
$$
is an isomorphism.
If $\bar\ee$ lifts to a $\cC_{T'}$-morphism $\bar\ee':X\to Y$, then the $X\to\cC_{T'}$ is
an isomorphism.

Following the steps in the proof of \cite[Thm 2.1.7]{Illusie}, the obstruction to the existence of such $\bar\ee'$ (in the notation of 
\cite{Illusie}) are constructed as follows. First one has a sequence of cotangent complexes
\beq\label{seqq}
\LL_{X_0/Y_0}[-1]\lra \bar\ee^\ast \LL_{Y_0/S} \lra \bar\ee^\ast\LL_{Y_0/Y}\lra \bar\ee^\ast\LL_{Y_0/Y}^{\geq -1}\lra \pi_T^\ast J[1],
\eeq
where the first (left) morphism comes from the triple $X_0\to Y_0\to S$;
the middle morphism is induced by $\LL_{Y_0/S}\to \LL_{Y_0/Y}$.

Using $\LL_{X_0/Y_0}=V_T\dual[1]$, this sequence associates an element
$$\omega(\bar\ee, \jj)\in \Ext^2_{X_0}(\LL_{X_0/Y_0}, \pi_T^\ast J)=\Ext^1_{X_0}(V_T\dual,\pi_T^\ast J)=H^1(\cC_T,V_T\otimes \pi_T^\ast J).
$$
The argument in \cite[Thm 2.1.7]{Illusie} shows 
that $\omega(\bar\ee,\jj)=0$ if and only if a lift $\bar\ee':X\to Y$ exists in the diagram (\ref{Illusie}). 
Such lift exists if and only if a lift $\ee':\cC_{T'}\to H$ exists in the diagram (\ref{lift-general}). Hence we only need to verify  that 
$\omega(\bar\ee,\jj)=\omega(\ee,H,W)$.

To this end, we verify the commutativity of the following diagram
\beq\lab{GGG}
\begin{CD}
\LL_{X_0/Y_0}[-1]@>>>\bar\ee^\ast\LL_{Y_0/S}@>>>\bar\ee^\ast \LL_{Y_0/Y}\\
@VV{\cong}V @AAA @AA{\cong}A\\
\bar\ee^\ast\LL_{Y_0/X_0}@<{\cong}<<\bar\ee^\ast \jj^\ast\LL_{Y/S}@>>>\LL_{X_0/S},
\end{CD}
\eeq
where the first vertical arrow is an isomorphism because $\LL_{X_0}=0$;
the left square is commutative because 
the canonical $\LL_{Y_0/S}\to\LL_{Y_0/X_0}$ induces a
$\bar\ee^\ast\LL_{Y_0/S}\to\bar\ee^\ast\LL_{Y_0/X_0}$ that splits the left square into two commutative triangles of cotangent complexes;
the third vertical arrow is composing $\LL_{X_0/S}\cong \bar\ee^\ast\Delta^\ast\LL_{X_0/S}$
with the isomorphism $\Delta^\ast\LL_{X_0/S}\mapright{\cong} \LL_{Y_0/Y}$.
The right square is commutative because one has a canonical pullback $\bar\ee^\ast\LL_{Y_0/S}\to\LL_{X_0/S}$ 
and a commutative diagram
$$
\begin{CD}
\bar\ee^\ast\LL_{Y_0/S}@>>>\bar\ee^\ast\LL_{Y_0/Y}\\
@VVV @AA{\cong}A\\
\LL_{X_0/S}@<{\cong}<<\bar\ee^\ast\Delta^\ast\LL_{X_0/S}. 
\end{CD}
$$
The upper and lower rows of the diagram (\ref{GGG}) are repectively  sequence (\ref{ob11}) and (\ref{seqq}).  
Thus the commutative diagram (\ref{GGG}) implies  $\omega(\bar\ee,\jj)=\omega(\ee,H,W)$.
\end{proof}

\bibliographystyle{amsplain}

\end{document}